\def\RaymondContactDetails{
  \author[van Bommel]{Raymond van Bommel}
  \address{Raymond van Bommel, Department of Mathematics, Massachusetts
    Institute of Technology, 77 Massachusetts Avenue, Cambridge, MA 02139,
    USA}
  \email{bommel@mit.edu}
}
\def\JordanContactDetails{
  \author[Docking]{Jordan Docking}
  \address{Jordan Docking, Department of Mathematics, University College London, London WC1H 0AY, UK}
  \email{jordan.docking.18@ucl.ac.uk}
}
\def\ReynaldContactDetails{
  \author[Lercier]{Reynald Lercier}
  \address{%
    Reynald Lercier,
    DGA \& Univ Rennes, %
    CNRS, IRMAR - UMR 6625, F-35000
    Rennes, %
    France. %
  }
  \email{reynald.lercier@m4x.org}
}
\def\ElisaContactDetails{
  \author[Lorenzo Garc\'ia]{Elisa Lorenzo Garc\'ia}
  \address{
    Elisa Lorenzo Garc\'ia,
    Institut de Math\'ematiques,  Universit\'e de Neuch\^atel, rue Emile-Argand 11, 2000, Neuch\^atel,
    Switzerland \&
    Univ Rennes, CNRS, IRMAR - UMR 6625, F-35000
    Rennes, %
    France. %
  }
  \email{elisa.lorenzo@unine.ch}
}
\def\gettexliveversion#1(#2 #3 #4#5#6#7#8)#9\relax{#4#5#6#7}
\edef\texliveversion{\expandafter\gettexliveversion\pdftexbanner\relax}
\DeclareRobustCommand{\SkipTocEntry}[9]{}
\DeclareRobustCommand{\SkipTocEntry}[5]{}
\newtheorem{theorem}{Theorem}[section]
\newtheorem*{theorem*}{Theorem}
\newtheorem{corollary}[theorem]{Corollary}
\newtheorem*{corollary*}{Corollary}
\newtheorem{conjecture}[theorem]{Conjecture}
\newtheorem*{conjecture*}{Conjecture}
\newtheorem{lemma}[theorem]{Lemma}
\newtheorem{proposition}[theorem]{Proposition}
\newtheorem*{proposition*}{Proposition}
\theoremstyle{definition}
\newtheorem*{definition*}{Definition}
\newtheorem{example}[theorem]{Example}
\newtheorem*{example*}{Example}
\newtheorem{remark}[theorem]{Remark}
\newtheorem*{notation*}{Notation}
\definecolor{mygray}{gray}{0.92}
\newcommand{\ctbl}{\rowcolor{mygray}}
\definecolor{mylinkcolor}{rgb}{0.5,0.0,0.0}
\definecolor{myurlcolor}{rgb}{0.0,0.0,0.75}
\tikzset{
  g3lattice/.style={inner sep=1pt,norm/.style={red!50!blue},char/.style={blue!50!black},
    lin/.style={black!50}},cnj/.style={black!50,yshift=-2.5pt,left=-1pt of #1,scale=0.5,fill=white},
  sml/.style={scale=0.55},
  typ/.style={scale=1.0,inner sep=0.2em},
  lrg/.style={scale=0.9,inner sep=0.2em},
  fname/.style={scale=0.55},
  lin/.style={-,shorten <=-0.07em,shorten >=-0.07em},
  rem/.style={black!20,thin},
  lname/.style={scale=0.55,sloped,red,above=-0.07em,near end},
  every loop/.style={}
}
\DeclareMathOperator{\Id}{Id}
\DeclareMathOperator{\SL}{SL}
\DeclareMathOperator{\DO}{DO}
\DeclareMathOperator{\Rad}{Rad}
\def\Q{\mathbb{Q}}
\def\Z{\mathbb{Z}}
\def\C{\mathbb{C}}
\def\P{\mathbb{P}}
\def\Gm2{\mathbb{G}_m^2}
\def\PP{\mathbb{P}}
\newenvironment{strata}[1]{%
  \subsubsection*{{\underline{\textsc{Strata of dimension {#1}}}}}
  \ \smallskip%
}
\newcommand{\stbtype}[2]{$\substack{\scalebox{0.7}{#1}\\\mathtt{(#2)}}$}
\newcommand{\stbtypehyp}[2]{$\substack{\scalebox{0.7}{#1}\\\mathtt{(#2)_{_{\mathtt{H}}}}}$}
\newcommand{\ClId}[1]{{\bf \color{mylinkcolor} (#1)}}
\newcommand{\sA}{{\operatorname{A}}}
\newcommand{\sD}{{\operatorname{D}}}
\newcommand{\sE}{{\operatorname{E}}}
\newcommand{\sX}{{\operatorname{X}}}
\newcommand{\ifcolored}{false}
\newcommand\Smooth{\fcolorbox{white}{white}}
\newcommand\Aone{\ifthenelse{\equal{\ifcolored}{true}}
  {\fcolorbox{white}{SkyBlue}}{\fcolorbox{white}{white}}}             %
\newcommand\Aoneptwo{\ifthenelse{\equal{\ifcolored}{true}}
  {\fcolorbox{white}{Cyan}}{\fcolorbox{white}{white}}}            %
\newcommand\Aonepthree{\ifthenelse{\equal{\ifcolored}{true}}
  {\fcolorbox{white}{NavyBlue}}{\fcolorbox{white}{white}}}      %
\newcommand\RAonepthree{\ifthenelse{\equal{\ifcolored}{true}}
  {\fcolorbox{white}{MidnightBlue}}{\fcolorbox{white}{white}}} %
\newcommand\RAonepfoura{\ifthenelse{\equal{\ifcolored}{true}}
  {\fcolorbox{white}{Blue}}{\fcolorbox{white}{white}}}         %
\newcommand\RAonepfourb{\ifthenelse{\equal{\ifcolored}{true}}
  {\fcolorbox{white}{Periwinkle}}{\fcolorbox{white}{white}}}   %
\newcommand\RAonepfive{\ifthenelse{\equal{\ifcolored}{true}}
  {\fcolorbox{white}{Orchid}}{\fcolorbox{white}{white}}}        %
\newcommand\RAonepsix{\ifthenelse{\equal{\ifcolored}{true}}
  {\fcolorbox{white}{Fuchsia}}{\fcolorbox{white}{white}}}        %
\newcommand\Atwo{\ifthenelse{\equal{\ifcolored}{true}}
  {\fcolorbox{white}{pink}}{\fcolorbox{white}{white}}}    %
\newcommand\AoneAtwo{\ifthenelse{\equal{\ifcolored}{true}}
  {\fcolorbox{white}{Lavender}}{\fcolorbox{white}{white}}} %
\newcommand\AoneptwoAtwo{\ifthenelse{\equal{\ifcolored}{true}}
  {\fcolorbox{white}{Magenta}}{\fcolorbox{white}{white}}} %
\newcommand\AonepthreeAtwo{\ifthenelse{\equal{\ifcolored}{true}}
  {\fcolorbox{white}{Red}}{\fcolorbox{white}{white}}} %
\newcommand\Atwoptwo{\ifthenelse{\equal{\ifcolored}{true}}
  {\fcolorbox{white}{Goldenrod}}{\fcolorbox{white}{white}}} %
\newcommand\AoneAtwoptwo{\ifthenelse{\equal{\ifcolored}{true}}
  {\fcolorbox{white}{Apricot}}{\fcolorbox{white}{white}}} %
\newcommand\Athree{\ifthenelse{\equal{\ifcolored}{true}}
  {\fcolorbox{white}{Green}}{\fcolorbox{white}{white}}} %
\newcommand\RAoneAthree{\ifthenelse{\equal{\ifcolored}{true}}
  {\fcolorbox{white}{Green}}{\fcolorbox{white}{white}}} %
\newcommand\Atwopthree{\ifthenelse{\equal{\ifcolored}{true}}
  {\fcolorbox{white}{orange}}{\fcolorbox{white}{white}}} %
\colorlet{coltypeI} {black}
\colorlet{coltypeII}{black}
\newcommand{\comm}[1]{#1}
\newcommand{\rey}[1]{{\color{Green} \noindent Reynald: #1}}
\newcommand{\reynald}[1]{\rey{#1}}
\newcommand{\raymond}[1]{{\color{blue} \noindent Raymond: #1}}
\newcommand{\jordan}[1]{{\color{purple} \noindent Jordan: #1}}
\newcommand{\elisa}[1]{{\color{red} \noindent Elisa: #1}}
\newcommand{\comm}[1]{}
\newcommand{\rey}[1]{}\newcommand{\reynald}[1]{}
\newcommand{\raymond}[1]{}
\newcommand{\jordan}[1]{}
\newcommand{\elisa}[1]{}
\subjclass[2020]{}
\tikzset{external/only named=true}
\def\T
\def\Dn
\def\Unn
\def\Znnn
\def\DNA
\def\UeU
\def\CAVE{
\begin{tikzpicture}[scale= .15]
		\draw (0,0) to[quick curve through={(2,3)}]
		(6,4);
		\draw (-1,1) to[quick curve through={(2,0.5)}]
		(6,0);
		\draw (3,-1) to[out angle = 90, in angle = 90, curve through={(4,5)}]
		(5,-1);
\end{tikzpicture}}
\def\UeZn
\def\ZZeU
\def\ZZeZZ
\def\ZZeUn
\def\BRAID
\def\ZneZn
\def\DeU
\def\DeZn
\def\UeUn
\def\UeUeU
\def\UeUeUeZ
\def\UeeeZ
\def\UeeeUeZ
\def\UeeeUn
\def\ZeUnn
\def\ZneUn
\def\UeZeU
\def\UneZeZ
\def\ZeUneZ
\def\UnneUn
\def\UeZeZn
\def\ZneZeU
\def\ZneUeZn
\def\ZneZneU
\def\UeZZeU
\def\ZneUeUeZ
\def\ZeeeZeZn
\def\ZneZeZn
\def\ZneZneZn
\def\ZZeZeU
\def\UeZZeZn
\def\ZneZneUeZ
\def\ZZeZeZn
\def\ZneZZeZn
\def\ZneZneZneZ
\begin{document}

\title{Reduction of Plane Quartics and Dixmier-Ohno invariants}

\begin{abstract}
  We characterise, in terms of Dixmier-Ohno invariants, the types of
  singularities that a plane quartic curve can have. We then use these results
  to obtain new criteria for determining the stable reduction types of
  non-hyperelliptic curves of genus 3.
\end{abstract}

\maketitle

\section{Introduction}

Let $C$ be a geometrically connected smooth projective curve of genus~3. If
its canonical divisor is very ample, so that the curve is non-hyperelliptic,
the embedding it defines produces a plane quartic, which is unique up to
projective transformations.
These simple facts give rise to a remarkable connection between the moduli of
curves of genus 3 and the algebra of Dixmier-Ohno invariants for the natural
action of $\SL_3$ on the vector space $\mathcal{F}_{3,4}$ of homogeneous
polynomials of degree 4 in 3 variables, that is, ternary quartics.

In a landmark paper~\cite{DM69}, Deligne and Mumford proved that the moduli
space $\mathcal{M}_g$ of smooth projective curves of genus $g \geq 2$ is
always irreducible, independently of the characteristic of the field of
definition. The first of the two proofs they give consists in compactifying this
space by adding the curves with mild singularities, the stable curves.
Although potentially complicated, blowing up singular loci and taking
normalisations makes it theoretically possible to stabilise a projective
curve~\cite{HM98}, from which it is then easier to obtain related quantities
of a geometric or arithmetic nature (genus, conductor, \textit{etc}).

At the same time, Mumford was developing geometric invariant theory (GIT),
with the aim of explicitly constructing moduli spaces as quotients of
parametrising schemes~\cite{GIT}. An important notion in GIT is that of
stability. In genus 3~\cite[Lemma~1.4]{Artebani09}, a quartic is GIT-stable if
it has at most ordinary nodes and cusps. In particular, quartics with non-zero
discriminants, \textit{i.e.}\ smooth, are GIT-stable. On the other hand, a
quartic is GIT-unstable, \textit{i.e.}\ it has a triple point or consists of a
cubic and an inﬂectional tangent, if its Dixmier invariants are all zero. If
at least one invariant is non-zero, a quartic is GIT-semi-stable. Quartics
with a tacnode are typical examples of semi-stable quartics that are not
stable.
In a sense, GIT-semi-stability can also be seen as another compactification of
the moduli space of projective curves.

A modest contribution to this area is made in this paper for the curves of
genus~3.
It first aims to classify GIT-semi-stable quartics according to their
invariants (see Thm.~\ref{thm:invtosingtypes}). Unlike the case of the
action of $\SL(2,\C)$ on binary forms of small degree (see for
instance~\cite{Mestre91} for the degree 6 case), little is known from this
viewpoint for quartics.
We start from the stratification over the complex numbers of singular
quartics.
The stratification which is based on the
  different types of singularities a plane quartic can have was studied by
  Arnol\cprime d in \cite{Arnold72,Arnold74} and has its origin in work by
  Du~Val on del Pezzo surfaces in \cite{DuVal34}.  This
  stratification has also been studied by Hui in~\cite{hui79} in great detail,
  and since we use the formulas from Hui's thesis, we will refer to this
  stratification as the Hui stratification.
This complete classification, that we extend to positive characteristic
(greater than 7), consists of 21 possibilities for irreducible quartics, and
34 possibilities for non-reducible ones (see Sec.~\ref{sec:singular-quartics},
Tab.~\ref{tab:inf} and Tab.~\ref{tab:rnf}). The subset of GIT-semi-stable
quartics, the quartics of interest here, reduces to 33 cases (see the
specialisation graph on Fig.~\ref{fig:specialize}).

This work allows us, in the second part, to efficiently determine the stable
reduction of a plane quartic curve based on its invariants.
In genus 2, for example, Mestre exhibits such a connection at the end
of~\cite{Mestre91}, and this question is completely resolved in Liu's
remarkable work~\cite{Liu93}. More generally, the canonical isomorphism between hyperelliptic curves and binary forms is extended as an holomorphic map from the Deligne-Mumford compactification to the compactification of binary forms by adding the singular ones~\cite{AvritzerLange}, and the determination of the stable reduction is determined in terms of the degenerations modulo powers of $p$ of the binary forms (``Cluster picture'') in~\cite{m2d2}. This characterisation suggests that the stable reduction type can be, in general, read from the valuations of the invariants of hyperelliptic curves (\textit{e.g.}~\cite[Thm. 6.5]{yo}): this is work in progress by Cowland Kellock \cite{Lilybelle}.

In this paper, we endeavour to generalise this approach to the case of plane
quartic curves using Dixmier-Ohno invariants.
In genus 3, however, the situation is more complicated. There are 42
possibilities for the type of stable reduction, see~\cite{BDDLL2023}.
In \textit{loc.\ cit.}\ there is a conjectural correspondence
between the stable reduction type of a plane quartic curve and the degeneration type
(``Octad picture'') of any of its Cayley octads.
We also mention here the work of Dokchitser \cite{Dokchitser21} that allows to determine the stable model from a fan associated to the Newton polygon of a plane curve, so long as the curve satisfies a condition known as ``$\Delta_v$-regularity''.
But invariant-based criteria are far more effective in
their field of application.
Partial results of this form can be found in~\cite{lllr21}, characterising good
hyperelliptic reduction and in~\cite{BCKLS20} for the special family of Ciani
plane quartics.
Our interpretation in terms of $p$-adic valuations of the invariant-based
classification for singularities allows us to give more precise results
(see Thm.~\ref{thm:GIT-MD} and Cor.~\ref{cor:stabsingtypes}).
They are summarised in Tab.~\ref{fig:labelledstablemodelgraph}. From
the reduction of the invariants we are able to determine all possibilities for
the stable reduction type of a curve with such invariants. We demonstrate their
usefulness at the end of the article, on a large database of genus 3
curves~\cite{Database3}.

Overall, the paper focuses thus on the relationship between GIT and
Deligne-Mumford compactifications, for plane quartics via the Hui
stratification (see Fig.~\ref{fig:commutativedg}).
The arrows in the diagram give
  relationships between the boundary components of the different moduli
  spaces, \textit{i.e.}\ if a curve after reduction ends up on one of the boundary
  components of one of the spaces, then the respective Theorem or Corollary
  would tell on which of the boundary components of the other moduli space the
  reduction of the curve could lie. For example, relations satisfied by Dixmier–Oh\-no invariants of a
plane quartic tell us to which stratum of the Hui stratification it
belongs. In turn, Hui normal forms for quartics in a given stratum are close
enough to the stable model that a significant amount of information
  about the stable reduction type can be obtained.
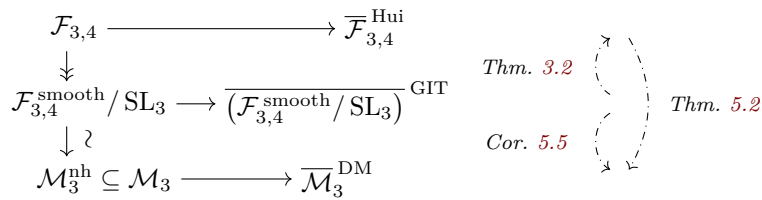
\begin{figure}[htbp]
  \begin{center}
    \begin{tikzpicture}
      \node at (-0.5,3) (A) {$\mathcal{F}_{3,4}$};%
      \node at (3.5,3) (B) {$ \overline{\mathcal{F}}_{3,4}^{\:\mathrm{Hui}}$};
      \node at (-0.3,2) (C)
      {$\mathcal{F}_{3,4}^{\:\mathrm{smooth}}/\operatorname{SL}_3$};%
      \node at (3,2) (D) {$\overline{\left(\mathcal{F}_{3,4}^{\:\mathrm{smooth}}/\operatorname{SL}_3\right)}^{\:\mathrm{GIT}}$};
      \node at (-0.1,1) (E) {$\mathcal{M}_3^{\mathrm{nh}}\subseteq\mathcal{M}_3$};%
      \node at (3,1) (F) {$\overline{\mathcal{M}}_3^{\:\mathrm{DM}}$};
      \draw[->] (A) to (B) ;%
      \draw[->>] (-0.6,2.7) to (-0.6,2.3) ;%
      \draw[->] (C) to (D) ;%
      \draw[->] (-0.6,1.7) -- ++  (0,-0.4) node[pos=.75, label={[rotate=90]below right:{$\sim$}}] {};%
      \draw[->] (E) to (F) ;%
      \node at (6.5,3) (Br) {}; \node at (6.5,2) (Dr) {}; \node at (6.5,1) (Fr) {};
      \node at (6.75,3) (Bs) {}; \node at (6.75,2) (Ds) {}; \node at (6.75,1) (Fs) {};
      \draw[dashdotted,<-] (Br.-45) to [bend right=60]  (Dr.45);
      \draw[dashdotted,->] (Dr.-45) to [bend right=60]  (Fr.45);
      \draw[dashdotted,->] (Bs.-45) to [bend left=30]  (Fs.45);
      \node at (5.5,2.5) {{\footnotesize \textit{Thm.~\ref{thm:invtosingtypes}}}};
      \node at (8,2) {{\footnotesize \textit{Thm.~\ref{thm:GIT-MD}}}};
      \node at (5.5,1.5) {{\footnotesize \textit{Cor.~\ref{cor:stabsingtypes}}}};
    \end{tikzpicture}
  \end{center}
  \caption{Relations among different quartic compactifications}
  \label{fig:commutativedg}
\end{figure}

\addtocontents{toc}{\SkipTocEntry}
\subsection*{Structure of the paper}
Sec.~\ref{sec:preliminaries} contains the necessary prerequisites about
Dixmier-Ohno invariants (Sec.~\ref{sec:invariants}), Hui's stratification of
singular quartics (Sec.~\ref{sec:singular-quartics}) and stable
reduction (Sec.~\ref{sec:stable-reduct-curv}).

In Sec.~\ref{sec:sing-invar-strata}, we then design an algorithm,
Alg.~\ref{algo:singularities}, which, given the invariants of a quartic as
input, returns which singularities it might have. The algorithm uniquely
determines the singularities for quartics all of whose singularities are nodes or
cusps, irreducible or not (15 cases). For less common quartics with more
complex singularities, the invariants no longer allow a precise
determination. For example, Alg.~\ref{algo:singularities} cannot tell whether
a quartic has just a single tacnode, or a tacnode together with an additional
node or cusp.

Its validity is based on a sequence of technical propositions given in
Sec.~\ref{sec:algebr-char}.
Our approach follows a ``guess and prove'' paradigm. Relations stated in these
propositions have been calculated heuristically over $\Q$, by interpolating
them on normalised quartics given by Hui (see tables in
Sec.~\ref{sec:singular-quartics}).  We independently prove that they are valid
in Sec.~\ref{sec:sketches-proof}. These make use of the computational algebra
system~\textsc{magma}~\cite{magma} and the interested reader can download the
corresponding script at~\cite[file \texttt{G3SingularProof.m}]{BGit23}.
Using \textsc{magma} version 2.28-8, these
computations need about 6~\textsc{gb} of memory and less than two hours on a standard
laptop (\textsc{intel i{\footnotesize 7}-{\footnotesize 8850}h cpu}).  \smallskip

In Sec.~\ref{sec:quart-stable-reduct} we relate the GIT compactification of
$\mathcal{M}_3$ with the Deligne-Mumford compactification of $\mathcal{M}_3$
by stable curves. This is done via the Hui stratification. In
Sec.~\ref{sec:stable-reduct-types} we establish Thm.~\ref{thm:GIT-MD} stating
a relation between stable reduction types and singularities of GIT-semi-stable
plane quartics. Its proof is postponed to Sec.~\ref{sec:proof-theorem}. We
translate these results into explicit reduction criteria in
Corollary~\ref{cor:stabsingtypes}.  These characterisations are tight,
\textit{i.e.}\ one-to-one, for quartics whose reduction has only nodal
singularities.  For more complex singularities, the stable reduction type is
not unique. We determine the different possibilities and we prove that all of
them may be achieved.
We conclude in Sec.~\ref{sec:modform} with an application to integer quartics
of small discriminant.

\begin{remark}
\label{rmk:characteristichypotheses}
In Sections~\ref{sec:sing-invar-strata} and~\ref{sec:sketches-proof}, we state
theorems, propositions, and lemmas only in fields of characteristic $0$. A
primary rationale for this limitation is that Dixmier-Ohno invariants do not
constitute a complete list of generators for the equivalence of ternary
quartic forms under the action of $\SL_3$ in fields of positive characteristic
$p=2$, $3$, $5$ and $7$.  It is also uncertain at the time of writing whether
they generate the invariant algebra for $p>7$ (see Rem.~\ref{rmk:DOcharp}).
A second reason for this limitation is that their proofs often rely on Gröbner
basis computations performed over $\Q$ (\textit{e.g.}~Prop.~\ref{prop:dim1} to
Prop.~\ref{prop:dim4}, see Rem.~\ref{rmk:gbcharp}).

However, we expect that these statements hold in positive characteristic,
excluding $p=2$, $3$, $5$ and $7$. In practice, one can verify a particular
statement for a particular value of $p$ by running the same computational
verification in characteristic $p$. This leads us to the conjectural result
Conj.~\ref{conj:poschar}.
\end{remark}

\addtocontents{toc}{\SkipTocEntry}
\subsection*{Acknowledgements}

We express our gratitude to I.~Dolgachev for bringing Du~Val's work to our
attention, and to V.~Dokchitser for preliminary discussions on the subject. We
also extend our thanks to the referees for their valuable suggestions, which
have enhanced the presentation of this paper.\medskip

The first author was supported by the Simons Foundation grant 550033.
The second author undertook this research whilst supported by the Engineering and Physical Sciences Research Council [EP/L015234/1], the EPSRC Centre for Doctoral Training in Geometry and Number Theory (The London School of Geometry and Number Theory) at University College London.
The third author benefits from the France 2030 framework program ``Centre
Henri Lebesgue'', ANR-11-LABX-0020-01.
The research of the fourth author is partially funded by the Melodia
ANR-20-CE40-0013 project and the 2023-08 Germaine de Sta\"el project.

\section{Preliminaries}
\label{sec:preliminaries}

This section recalls the basic definitions and results on invariant theory,
singular quartics and stable reduction needed for this work.

\subsection{Invariants}
\label{sec:invariants}

In~\cite{Dixmier87}, Dixmier gave a list of 7 homogeneous polynomial
invariants for the equivalence of ternary quartic forms under the action of
$\SL_3(\C)$, denoted $I_3$, $I_6$, $I_9$, $I_{12}$, $I_{15}$, $I_{18}$ and
$I_{27}$. The last invariant satisfies the equality $2^{40} I_{27}=D_{27}$
where $D_{27}$ is the discriminant of the form. This list was later completed
by Ohno with $J_9$, $J_{12}$, $J_{15}$, $J_{18}$, $I_{21}$ and $J_{21}$
(\cite[Theorem 4.1]{Ohno07}, see also~\cite{elsenhans15}) into a list of 13
homogeneous generators of the $\C$-algebra of invariants. These invariants are
defined over $\Z[\tfrac{1}{2 \cdot 3}]$, and are now called the
\emph{Dixmier-Ohno invariants}.  When at least one of their values at a
ternary quartic form $F$ is not zero, we denote by $\DO(F)$ the corresponding
point in the weighted projective space with weights $3$, $6$, $9$, $9$, $12$,
$12$, $15$, $15$, $18$, $18$, $21$, $21$, $27$.

This algebra of invariants is finitely generated as a
$\C[I_3, I_6, I_9,I_{12},I_{15},I_{18}, I_{27}]$-module, or equivalently
Dixmier invariants form a so-called ``homogeneous system of parameters''
(HSOP).

\begin{remark}\label{rmk:DOcharp}
  In fields of characteristic $p > 0$, HSOPs are also known for the
  corresponding invariant algebra, even for $p=3$ if we assume that the huge
  invariants of degree 54 and 81 given in~\cite[Sec.~4]{lllr21} are well
  defined.
  In particular, Dixmier invariants give an HSOP for $p>7$, except for $p=19$,
  $47$, $277$ and $523$ where $I_9$ must be replaced by $I_9 - J_9$.
  Note that it is not yet proven that adding Ohno invariants to these HSOPs is
  sufficient to generate the invariants algebras completely when $p>0$, but it
  is a commonly held belief that this is the case for $p>7$. An analogous result for hyperelliptic curves
    holds~\cite[Prop. 1.9]{LR11}.
\end{remark}

\subsection{Singular quartics}
\label{sec:singular-quartics}

In~\cite{hui79}, Hui describes a complete stratification of the space
of singular plane quartic curves defined over $\C$. He also provides normal
forms up to the action of $\SL_3(\C)$ for all singularity types. We summarise
his results in Tab.~\ref{tab:inf} and Tab.~\ref{tab:rnf}.
The first column\footnote{Drawings are for illustrative purposes only, and do not correspond to graphs of Hui normal forms.} gives the types of the singularities of the quartics given in
the third column, according to Arnold's
classification~\cite{Arnold72}, \textit{i.e.}\ $\sA_1$, \ldots,
$\sA_6$, $\sD_4$, $\sD_5$, $\sE_6$ and $\sE_7$\,. Multiples and powers of these
types denote curves with several singularities. For instance, a quartic of
type $\sA_1^2\sA_2$ has three distinct singular points, two of type $\sA_1$
and one of type $\sA_2$\,.
We prefix by ``$\,^r$'' types of curves which are not irreducible. For
example, a curve of type $\,^r\sA_1^4\,_{cub}$ has two components, a line and
a cubic. For quartics with non-isolated singularities
(Tab.~\ref{tab:noniso}), we use the more explicit notation $\ell$ and $c$,
for a line and a conic. Thus a quartic of type $c^2$ is the square of a conic.
Otherwise, the second column in these tables gives the number of parameters
$\alpha$, $\beta$, $\ldots$ needed to define the normal forms. \smallskip

\begin{table}[htbp]
  \centering
  \small
    \begin{displaymath}
      \renewcommand{\arraystretch}{1.3}
      \setlength\arrayrulewidth{1pt}
      \begin{array}{|c@{}c|c|l|}
        \hline
        \text{Type} && \# & \multicolumn{1}{c|}{\text{Normal forms}} \\
        \hline\hline
        \text{Smooth}~\text{\cite{shioda93}}
        & { \tikzsetnextfilename{SngSmooth} \SngSmooth }
        & 6 & xz^3+z(\alpha\,x^3+\beta\,x^2y+y^3)+\gamma\,x^4+\delta\,x^3y+\epsilon\,x^2y^2+\zeta\,xy^3+y^4 \\
        \hline\hline

        \sA_1
        & { \tikzsetnextfilename{SngAone} \SngAone }
        & 5 & y{z}^{3}+(\alpha\,{y}^{2}+{x}^{2}){z}^{2}+(\beta\,{y}^{3}+\gamma\,{y}^{2}x+y{x}^{2})z+\delta\,{y}^{4}+\epsilon\,{y}^{3}x\\
        \sA_2
        & { \tikzsetnextfilename{SngAtwo} \SngAtwo }
        & 4 & y{z}^{3}+(\alpha\,{y}^{2}+\beta\,yx+{x}^{2}){z}^{2}+(\gamma\,{y}^{3}+\delta\,{y}^{2}x)z+{y}^{3}x\\
        \sA_3
        & { \tikzsetnextfilename{SngAthree} \SngAthree }
        & 3 & {x}^{2}{z}^{2}+\alpha\,{y}^{2}xz+{y}^{4}+\beta\,{y}^{3}x+\gamma\,{y}^{2}{x}^{2}+y{x}^{3}\\
        \sA_4
        & { \tikzsetnextfilename{SngAfour} \SngAfour }
        & 2 & {x}^{2}{z}^{2}+2\,{y}^{2}xz+{y}^{4}+\alpha\,{y}^{3}x+\beta\,{y}^{2}{x}^{2}+y{x}^{3}\\
        \ctbl \sD_4
        & { \tikzsetnextfilename{SngDfour} \SngDfour }
        &  2 &  ({y}^{2}x+y{x}^{2})z+\alpha\,{y}^{4}+\beta\,{y}^{2}{x}^{2}-{x}^{4}\\
        \sA_5
        & { \tikzsetnextfilename{SngAfive} \SngAfive }
        & 1 & {x}^{2}{z}^{2}+2\,{y}^{2}xz+{y}^{4}-{y}^{2}{x}^{2}+\alpha\,y{x}^{3}\\
        \ctbl \sD_5
        & { \tikzsetnextfilename{SngDfive} \SngDfive }
        &  1 &  y{x}^{2}z-{y}^{4}+\alpha\,{y}^{3}x-{x}^{4}\\
        \sA_6 && 0 & {x}^{2}{z}^{2}+2\,{y}^{2}xz+{y}^{4}-y{x}^{3} \\
        \ctbl \sE_6 &&  0 &  {x}^{3}z-{y}^{4}-\alpha\,{y}^{2}{x}^{2}\ \ \
                    (\alpha=0\text{ or }1) \\
        \hline\hline

        \sA_1^2
        & { \tikzsetnextfilename{SngAoneptwo} \SngAoneptwo }
        & 4 & ({y}^{2}+{x}^{2}){z}^{2}+(\alpha\,y{x}^{2}+\beta\,{x}^{3})z+{y}^{2}{x}^{2}+\gamma\,y{x}^{3}+\delta\,{x}^{4}\\
        \sA_1\,\sA_2
        & { \tikzsetnextfilename{SngAoneAtwo} \SngAoneAtwo }
        & 3 & {y}^{2}{z}^{2}+(\alpha\,y{x}^{2}+{x}^{3})z+{y}^{2}{x}^{2}+\beta\,y{x}^{3}+\gamma\,{x}^{4}\\
        \sA_2^2
        & { \tikzsetnextfilename{SngAtwoptwo} \SngAtwoptwo }
        & 2 & {y}^{2}{z}^{2}+(\alpha\,y{x}^{2}+{x}^{3})z+y{x}^{3}+\beta\,{x}^{4}\\
        \sA_1\,\sA_3
        & { \tikzsetnextfilename{SngAoneAthree} \SngAoneAthree }
        & 2 & {x}^{2}{z}^{2}+(\alpha\,{y}^{2}x+y{x}^{2})z+{y}^{4}+\beta\,{y}^{3}x\\
        \sA_2\,\sA_3
        & { \tikzsetnextfilename{SngAtwoAthree} \SngAtwoAthree }
        & 1 & {x}^{2}{z}^{2}+\alpha\,{y}^{2}xz+{y}^{4}+{y}^{3}x\\
        \sA_1\,\sA_4
        & { \tikzsetnextfilename{SngAoneAfour} \SngAoneAfour }
      & 1 & {x}^{2}{z}^{2}+2\,{y}^{2}xz+{y}^{4}-{y}^{3}x+\alpha\,{y}^{2}{x}^{2}\\
        \sA_2\,\sA_4
        & { \tikzsetnextfilename{SngAtwoAfour} \SngAtwoAfour }
        & 0 & {x}^{2}{z}^{2}+2\,{y}^{2}xz+{y}^{4}-{y}^{3}x\\
        \hline\hline

        \sA_1^3
        & { \tikzsetnextfilename{SngAonepthree} \SngAonepthree }
        & 3 & ({y}^{2}+\alpha\,yx+{x}^{2}){z}^{2}+(\beta\,{y}^{2}x+\gamma\,y{x}^{2})z+{y}^{2}{x}^{2}\\
        \sA_1^2\,\sA_2
        & { \tikzsetnextfilename{SngAoneptwoAtwo} \SngAoneptwoAtwo }
        & 2 & ({y}^{2}+\alpha\,yx+{x}^{2}){z}^{2}+(\beta\,{y}^{2}x-2\,y{x}^{2})z+{y}^{2}{x}^{2}\\
        \sA_1\,\sA_2^2
        & { \tikzsetnextfilename{SngAoneAtwoptwo} \SngAoneAtwoptwo }
        & 1 & ({y}^{2}+\alpha\,yx+{x}^{2}){z}^{2}+(-2\,{y}^{2}x-2\,y{x}^{2})z+{y}^{2}{x}^{2}\\
        \sA_2^3
        & { \tikzsetnextfilename{SngAtwopthree} \SngAtwopthree }
        & 0 & ({y}^{2}-2\,yx+{x}^{2}){z}^{2}+(-2\,{y}^{2}x-2\,y{x}^{2})z+{y}^{2}{x}^{2}\\
        \hline
      \end{array}
    \end{displaymath}
  \caption{Irreducible normal forms of singular plane quartic curves}
  \label{tab:inf}
\end{table}

\begin{table}[htbp]
  \centering
  \renewcommand{\arraystretch}{1.3}
  \setlength\arrayrulewidth{1pt}
  \begin{subtable}[h]{0.7\textwidth}
    \small
    \begin{tabular}{|c@{}c|c|l|}
      \hline
      \text{Type} &
      & \#
      & \multicolumn{1}{c|}{\text{Normal forms}} \\
      \hline\hline

      \ctbl $\,^r\sA_5$
      & { \tikzsetnextfilename{SngRedAfive} \SngRedAfive }
      & 1 & $x\,(x{z}^{2}+(\alpha\,yx+{x}^{2})z+{y}^{3})$\\
      \ctbl $\,^r\sX_9(\tilde{\sA}_7)$
      & { \tikzsetnextfilename{SngRedXnine} \SngRedXnine }
      &  1 &  ${y}^{4}+\alpha\,{y}^{2}{x}^{2}+{x}^{4}$\\
      \ctbl $\,^r\sD_6$
      & { \tikzsetnextfilename{SngRedDsix} \SngRedDsix }
      &  0 &  $x\,({z}^{3}+yxz+{x}^{3})$\\
      $\,^r\sA_7$
      & { \tikzsetnextfilename{SngRedAseven} \SngRedAseven }
      & 0 & $(yz+{y}^{2}+{x}^{2})\,(yz+{x}^{2})$\\
      \ctbl $\,^r\sE_7$
      & { \tikzsetnextfilename{SngRedEseven} \SngRedEseven }
      &  0 &  $x({z}^{3}+x{z}^{2}+y{x}^{2})$\\
      \hline\hline

      $\,^r\sA_1\,\sA_3$
      & { \tikzsetnextfilename{SngRedA1A3} \SngRedAoneAthree }
      & 2 & $x\,(\alpha\,x{z}^{2}+({y}^{2}+\beta\,yx+{x}^{2})z+{y}^{2}x)$\\
      \ctbl $\,^r\sA_1\,\sD_4$
      & { \tikzsetnextfilename{SngRedA1D4} \SngRedAoneDfour }
      &  1 &  $x\,(y{z}^{2}+(yx+\alpha\,{x}^{2})z+{x}^{3})$\\
      $\,^r\sA_3^2$
      & { \tikzsetnextfilename{SngRedAthreeAthree} \SngRedAthreeAthree }
      & 1 & $(yz+{x}^{2})\,(\alpha\,yz+{x}^{2})$\\
      $\,^r\sA_1\,\sA_5$
      & { \tikzsetnextfilename{SngRedA1A5Conic} \SngSngRedAoneAfiveConic }
      & 0 & $({z}^{2}+yx)\,({z}^{2}+yz+yx)$\\
      \ctbl $\,^r\sA_1\,\sA_5$
      & { \tikzsetnextfilename{SngRedA1A5Cubic} \SngRedAoneAfiveCubic }
      &  0 &  $x\,({z}^{3}+yxz+{y}^{2}x) $\\
      \ctbl $\,^r\sA_1\,\sD_5$
      & { \tikzsetnextfilename{SngRedA1D5} \SngRedAoneDfive }
      &  0 &  $x\,(y{z}^{2}+\alpha\,{x}^{2}z+{x}^{3}) \ \ \ \ (\alpha=0\text{ or }1) $\\
      \ctbl $\,^r\sA_1\,\sD_6$
      & { \tikzsetnextfilename{SngRedAoneDsix} \SngRedAoneDsix }
      &  0 &  $yz\,(xz+{y}^{2})$\\
      \ctbl $\,^r\sA_2\,\sA_5$
      & { \tikzsetnextfilename{SngRedAtwoAfive} \SngRedAtwoAfive }
      &  0 &  $x\,({z}^{3}+{y}^{2}x)$\\
      \hline\hline

      $\,^r\sA_1^3$
      & { \tikzsetnextfilename{SngRedAonepthree} \SngRedAonepthree }
      &  3 & $x\,((y+\alpha\,x){z}^{2}+({y}^{2}+\beta\,yx+\gamma\,{x}^{2})z+{y}^{2}x)$\\
      $\,^r\sA_1^2\,\sA_3$
      & { \tikzsetnextfilename{SngRedAoneptwoAthreeCubic} \SngRedAoneptwoAthreeCubic }
      &  1 & $x\,(x{z}^{2}+({y}^{2}+\alpha\,yx)z+{y}^{2}x) $\\
      $\,^r\sA_1^2\,\sA_3$
      & { \tikzsetnextfilename{SngRedAoneptwoAthreeConic} \SngRedAoneptwoAthreeConic }
      &  1 & $((y+x)z+yx)\,((\alpha\,y+x)z+yx)$\\
      \ctbl $\,^r\sA_1^2\,\sD_4$
      & { \tikzsetnextfilename{SngRedAoneptwoDfour} \SngRedAoneptwoDfour }
      &   0 &  $yz\,((y+x)z+yx)$\\
      $\,^r\sA_1\,\sA_2\,\sA_3$
      & { \tikzsetnextfilename{SngRedAoneAtwoAthree} \SngRedAoneAtwoAthree }
      &  0 & $x\,(x{z}^{2}+({y}^{2}+2\,yx)z+{y}^{2}x)$\\
      $\,^r\sA_1\sA_3^2$
      & { \tikzsetnextfilename{SngAoneAthreeptwo} \SngAoneAthreeptwo }
      &  0 & y$z\,(yz+{x}^{2})$\\
      \hline\hline

      $\,^r\sA_1^4$
      & { \tikzsetnextfilename{SngAonepfourCubic} \SngAonepfourCubic }
      &  2 & $x\,((y+\alpha\,x){z}^{2}+({y}^{2}+\beta\,yx)z+{y}^{2}x)$\\
      $\,^r\sA_1^4$
      & { \tikzsetnextfilename{SngAonepfourConic} \SngAonepfourConic }
      &  2 & $((y+x)z+yx)\,((\alpha\,y+\beta\,x)z+yx)$\\
      $\,^r\sA_1^3\,\sA_2$
      & { \tikzsetnextfilename{SngAonepthreeAtwo} \SngAonepthreeAtwo }
      &  1 & $x\,({z}^{3}+\alpha\,y{z}^{2}+{y}^{2}z+{y}^{2}x)$\\
      $\,^r\sA_1^3\,\sA_3$
      & { \tikzsetnextfilename{SngRedAonepthreeAthree} \SngRedAonepthreeAthree}
      &  0 & y$z\,((y+x)z+{x}^{2})$\\
      \ctbl $\,^r\sA_1^3\,\sD_4$
      & { \tikzsetnextfilename{SngRedAonepthreeDfour} \SngRedAonepthreeDfour}
      &   0 &  $xyz \left( z+x \right) $\\
      \hline\hline

      $\,^r\sA_1^5$
      & { \tikzsetnextfilename{SngRedAonepfive} \SngRedAonepfive}
      &  1 & y$z((\alpha\,y+x)z+yx+{x}^{2})$\\
      \hline\hline

      $\,^r\sA_1^6$
      & { \tikzsetnextfilename{SngRedAonepsix} \SngRedAonepsix}
      &  0 & $xyz\,(z+y+x)$\\
      \hline
        \end{tabular}
    \caption{Non-irreducible}
    \label{tab:nonirr}
  \end{subtable}
  \hfill
  \begin{subtable}[h]{0.29\textwidth}
    \small
    \begin{tabular}{|c@{\hspace{3pt}}c|l|}
          \hline
          \text{Type} && \multicolumn{1}{c|}{\text{Normal forms}} \\
          \hline\hline
          \ctbl $\ell^2\,c$
          & { \tikzsetnextfilename{SngNonLtwoC} \SngNonLtwoC}
          & $ {x}^{2}\,((y+x)z+yx)$\\
          \ctbl $\ell^2\,c$
          & { \tikzsetnextfilename{SngNonLtwoCBis} \SngNonLtwoCBis}
          & $ {z}^{2}\,(xz+{y}^{2})$\\
          \ctbl $\ell\,\ell\,\ell^2$
          & { \tikzsetnextfilename{SngNonLLLtwo} \SngNonLLLtwo}
          & $ xy{z}^{2}$\\
          \ctbl $\ell\,\ell\,\ell^2$
          & { \tikzsetnextfilename{SngNonLLLtwoBis} \SngNonLLLtwoBis}
          & $ x{z}^{2}(z+x)$\\
          $c^2$
          & { \tikzsetnextfilename{SngNonCtwo} \SngNonCtwo}
          & $({z}^{2}+yx)^{2}$\\
          \ctbl $\ell^2\,\ell^2$
          & { \tikzsetnextfilename{SngNonLtwoLtwo} \SngNonLtwoLtwo}
          & $ {x}^{2}{z}^{2}$\\
          \ctbl $\ell\,\ell^3$
          & { \tikzsetnextfilename{SngNonLLthree} \SngNonLLthree}
          & $ {x}{z}^{3}$\\
          \ctbl $\ell^4$
          & { \tikzsetnextfilename{SngNonLfour} \SngNonLfour}
          & $ {z}^{4}$\\
          \hline
        \end{tabular}
    \caption{Non-isolated}
    \label{tab:noniso}
  \end{subtable}
  \caption{Non-irreducible normal forms of singular plane quartic curves}
  \label{tab:rnf}
\end{table}

\begin{remark}\label{rem:huiforms}
  Often the singular points of Hui normal forms are exactly of the expected
  type for any choice of parameters $\alpha$, $\beta$ \textit{et
    cetera}. This is the case, for example, for the normal forms of $\sA_1$
  and $\sA_2$ at $(1:0:0)$, or $\sA_1^2$ at this point and at $(0:1:0)$. The
  only way for these quartics to specialise to other singularity types is thus
  to have additional singular points (we use this fact in
  Sec.~\ref{sec:spec-hui-norm}).
  Where this is not the case, Hui explicitly states the values of the
  parameters that change the singularity type. For example, for $\sA_1^3$ he
  indicates that by setting $\alpha$, $\beta$ and $\gamma$ to $\pm 2$, the
  type of the three singular points $(1:0:0)$, $(0:1:0)$ and $(0:0:1)$ becomes
  $\sA_2$.
\end{remark}

\begin{remark}\label{rem:huichar}
  In fact, Hui's manuscript has to be checked with care, but the
  classification and the normal forms do hold in characteristic $p > 7$ for
  all $p$.
  For smaller $p$, one of the issues is that the singularities can have
  another type than those known in characteristic zero. For example, the
  singular point $(0:0:1)$ of the Hui normal form for $\sA_6$,
  ${x}^{2}{z}^{2}+2\,{y}^{2}xz+{y}^{4}-y{x}^{3}$, has Milnor number 9 instead of 6 in characteristic 7.

\end{remark}

Furthermore, Hui studies precisely how a given stratum specialises in strata
of smaller dimensions \cite[chap. 7]{hui79}. We summarise his results on
Fig.~\ref{fig:specialize} for the strata we are interested in, \textit{i.e.}
those which are non-unstable (in the GIT sense), starting from quartics
without singularities (top) up to the most singular quartics (bottom).
Furthermore, we have grouped strata that
are not GIT-stable, \textit{i.e.} with a $\sA_3$ singularity or more, into
three sets, $V$\ref{eq:A4}, $V$\ref{eq:A3} and $V$\ref{eq:rA1A3}, which are
delimited by dashed lines. In terms of invariants, strata within the same set
are indistinguishable from the stratum of smallest dimension.

\begin{figure}[htbp]
  \tikzsetnextfilename{SingSpecial}
  \,\hspace*{-1cm}\begin{tikzpicture}[
    scale=0.9,
    every node/.style={black},
    every path/.style={draw,-latex,line width=1pt},
    edge from parent/.style={draw,-latex,line width=1pt, black},
    level 1/.style = {level distance = 1.25cm},
    level 2/.style = {level distance = 1.25cm, sibling distance = 8cm},
    level 3/.style = {level distance = 2cm, sibling distance = 4cm},
    level 4/.style = {level distance = 2cm, sibling distance = 1.5cm},
    level 5/.style = {level distance = 2.5cm, sibling distance = 1.8cm},
    level 6/.style = {level distance = 3cm, sibling distance = 1cm}
    ]

    \renewcommand{\arraystretch}{0.6}

    \node (M3) {\small $\sA_0$}
    child { node (A1) {\Aone{$\sA_1$}} edge from parent[coltypeI]
      child { node (A1p2) {\Aoneptwo{$\sA_1^2$}} edge from parent[coltypeI]
        child { node (rA1p3) [xshift=1.2cm] {\RAonepthree{$\,^r\sA_1^3$}} edge from parent[coltypeI]
          child { node (rA1p4a) [xshift=-0.5cm] {\RAonepfoura{$\begin{array}{c}
                                    \,^r\sA_1^4\\\ \ \ \ \ \mbox{\tiny $cubic$}
                                  \end{array}$}} edge from parent[coltypeI]
            child { node (rA1p5) [xshift=-1cm] {\RAonepfive{$\,^r\sA_1^5$}}  edge from parent[coltypeI]
              child { node (rA1p6) {\RAonepsix{$\,^r\sA_1^6$}} edge from parent[coltypeI]
              }
            }
          }
          child { node (rA1p4b) {\RAonepfourb{$\begin{array}{c}
                                    \,^r\sA_1^4\\\ \ \ \ \ \mbox{\tiny $conic$}
                                  \end{array}$}} edge from parent[draw=none]
          }
        }
        child { node (A1p3) {\Aonepthree{$\sA_1^3$}} edge from parent[coltypeI]
          child { node (A1p2A2) {\AoneptwoAtwo{$\sA_1^2\sA_2$}} edge from parent[coltypeI]
            child { node (rA1p3A2) [xshift=-0.5cm] {\AonepthreeAtwo{$\,^r\sA_1^3\sA_2$}} edge from parent[draw=none]
              child { node (A2p3) [xshift=-0.5cm] {\Atwopthree{$\sA_2^3$}} edge from parent[draw=none]
              }
            }
            child { node (A1A2p2) [xshift=-0.5cm] {\AoneAtwoptwo{$\sA_1\sA_2^2$}} edge from parent[coltypeI]
              child { node (rA1p3A3) [xshift=-0.5cm] {\RAoneAthree{$\,^r\sA_1^3\sA_3$}}  edge from parent[draw=none]
              }
            }
            child { node (rA1p2A3a) [xshift=-0.5cm] {\RAoneAthree{$\begin{array}{c}
                                        \,^r\sA_1^2\sA_3\\\ \ \ \ \ \mbox{\tiny $cubic$}
                                      \end{array}$}} edge from parent[coltypeII]
              child { node (rA1A3p2) [xshift=-0.5cm, yshift=-1cm]
                {\RAoneAthree{$\,^r\sA_1\sA_3^2$}} edge from parent[draw=none]
              }
            }
            child { node (rA1p2A3b) [xshift=-0.5cm] {\Athree{$\begin{array}{c}
                                        \,^r\sA_1^2\sA_3\\\ \ \ \ \ \mbox{\tiny $conic$}
                                      \end{array}$}} edge from parent[draw=none]
              child { node (rA1A2A3) {\RAoneAthree{$\,^r\sA_1\sA_2\sA_3$}} edge from parent[draw=none]
              }
              child { node (rA1A5) [xshift=0.7cm] {$\,^r\sA_1\sA_5$} edge from parent[draw=none]
              }
            }
          }
        }
        child { node (A1A2) {\AoneAtwo{$\sA_1\sA_2$}} edge from parent[coltypeI]
          child { node (A2p2) [xshift=-0.5cm] {\Atwoptwo{$\sA_2^2$}} edge from parent[coltypeI]
          }
          child { node (rA1A3) {\RAoneAthree{$\,^r\sA_1\sA_3$}}  edge from parent[coltypeII]
            child { node (rA3p2) [xshift=0.2cm,yshift=-1cm] {\Athree{$\,^r\sA_3^2$}} edge from parent[draw=none]
              child { node (rA7c2) [xshift=1cm] {$\,^r\sA_7\text{ or }c^2$}  edge from parent[coltypeI]
              }
            }
          }
          child { node (A1A3) [xshift=1cm] {\Athree{$\sA_1\sA_3$}} edge from parent[coltypeI]
            child { node (A2A3) [xshift=0.5cm] {\Athree{$\sA_2\sA_3$}}  edge from parent[coltypeI]
            }
            child { node (A1A4) {$\sA_1\sA_4$}  edge from parent[coltypeI]
              child { node (A2A4) {$\sA_2\sA_4$} edge from parent[coltypeI]
              }
            }
          }
        }
      }
      child { node (A2) [xshift=-1cm] {\Atwo{$\sA_2$}} edge from parent[coltypeI]
        child { node (A3) [xshift=1.5cm] {\Athree{$\sA_3$}} edge from parent[coltypeI]
          child { node (A4) {$\sA_4$} edge from parent[coltypeI]
            child { node (A5)  {$\sA_5$} edge from parent[coltypeI]
              child { node (A6) {$\sA_6$} edge from parent[coltypeI]
              }
            }
          }
        }
      }
    };
    \draw[line width=0.6pt, coltypeII] (A1p2) to [bend left=5] (A3);
    \draw[coltypeI] (A2)   to (A1A2);
    \draw[line width=0.6pt, coltypeI] (rA1p3.330) to [bend left=5] (rA1A3.120); %
    \draw[coltypeI] (A1p3) to (rA1p4a.40); %
    \draw[coltypeI] (A1p3) to (rA1p4b.50);
    \draw[line width=0.6pt, coltypeII] (A1p3) to [bend left=8] (A1A3);
    \draw[coltypeI] (A1A2) to (A1p2A2); %
    \draw[line width=0.6pt, coltypeII] (A1A2) to [bend left=5] (A4);
    \draw[line width=0.6pt, coltypeI] (A3) to [bend right=10] (rA1A3.60); %
    \draw[coltypeI] (A3) to (A1A3);
    \draw[coltypeI] (rA1p4a.285) to (rA1p3A2.90); %
    \draw[line width=0.6pt, coltypeI] (rA1p4a.310) to [bend left=5] (rA1p2A3a.120);
    \draw[coltypeI] (rA1p4b.260) to (rA1p5); %
    \draw[line width=0.6pt, coltypeI] (rA1p4b.323) to [bend left=8] (rA1p2A3b.90);
    \draw[coltypeI] (A1p2A2) to (rA1p3A2.70); %
    \draw[line width=0.6pt, coltypeII] (A1p2A2.330) to [bend left=5] (A2A3.160);
    \draw[line width=0.6pt, coltypeII] (A1p2A2.345) to [bend left=15] (A1A4.140);
    \draw[line width=0.6pt, coltypeI] (A1p2A2.300) to [bend left=5] (rA1p2A3b.75);
    \draw[coltypeI] (rA1A3.230) to (rA1p2A3a.70); %
    \draw[line width=0.6pt, coltypeII] (A1A3) to [bend left=10] (A5.110); %
    \draw[coltypeII] (A1A3.200) to (rA1p2A3b.60);
    \draw[coltypeI] (A1A3.190) to (rA1p2A3a.50);
    \draw[line width=0.6pt, coltypeI] (A2p2.305) to [bend left=5] (A2A3.120); %
    \draw[line width=0.6pt, coltypeII] (A2p2.345) to [bend left=10] (A5.125);
    \draw[line width=0.6pt, coltypeI] (A2p2.220) to [bend right=5] (A1A2p2.40);
    \draw[coltypeI] (A4) to (A1A4); %
    \draw[coltypeI] (rA1p5) to (rA1p3A3.140); %
    \draw[line width=0.6pt, coltypeI] (rA1p5) to [bend left=15]  (rA1A3p2.130);
    \draw[line width=0.6pt, coltypeI] (rA1p3A2.310) to [bend left=10] (rA1A3p2); %
    \draw[coltypeI] (rA1p3A2) to (rA1p3A3.130);
    \draw[line width=0.6pt, coltypeI] (A1A2p2.195) to [bend right=20] (A2p3.90); %
    \draw[line width=0.6pt, coltypeI] (A1A2p2.290) to (rA1A2A3.100);
    \draw[line width=0.6pt, coltypeII] (A1A2p2.300) to [bend left=10] (rA1A5.140);
    \draw[coltypeI] (A1A2p2) to (rA1p3A3.120);
    \draw[coltypeII] (A1A2p2.280) to (rA1A3p2.95);
    \draw[line width=0.6pt, coltypeII] (A1A2p2.330) to [bend left=5] (A2A4.170);
    \draw[line width=0.6pt, coltypeII] (A1A2p2.320) to [bend left=18] (rA7c2.130);
    \draw[line width=0.6pt, coltypeI] (rA1p3A2) to [bend left=5]  (rA1A2A3.130); %
    \draw[line width=0.6pt, coltypeI] (rA1p2A3a.280) to [bend right=10] (rA1p3A3.110); %
    \draw[coltypeI] (rA1p2A3a.290) to (rA1A3p2);
    \draw[coltypeI] (rA1p2A3a.300) to (rA1A2A3.70);
    \draw[coltypeII] (rA1p2A3b.260) to (rA1A5.120); %
    \draw[line width=0.6pt, coltypeI] (rA1p2A3b.240) to [bend right=10] (rA1p3A3.90);
    \draw[coltypeI] (rA1p2A3b.0) to [bend left=30]  (rA3p2.100);
    \draw[line width=0.6pt, coltypeI] (rA3p2.180) to [bend right=35] (rA1A3p2.60);
    \draw[line width=0.6pt, coltypeII] (A2A3) to [bend left=5] (A6.135); %
    \draw[coltypeI] (A2A3) to (A2A4);
    \draw[line width=0.6pt, coltypeI] (A2A3) to [bend left=17]  (rA1A2A3.50);
    \draw[coltypeI] (A2A3) to [bend right=30] (rA3p2);
    \draw[coltypeII] (A1A4) to  (A6.120); %
    \draw[line width=0.6pt, coltypeI] (A1A4.220) to [bend right=10]  (rA1A5.110);
    \draw[line width=0.6pt, coltypeII] (A1A4) to [bend right=10] (rA7c2.65);
    \draw[line width=0.6pt, coltypeI] (A5) to [bend right=8] (rA1A5.85); %
    \draw[line width=0.6pt, coltypeI] (A5) to [bend right=8] (rA7c2.45);
    \node[above left = -1.3cm and 0.3cm of rA1p6]  (dim0) {\footnotesize Dim. 0};
    \node[above = 2.5cm of dim0]  (dim1) {\footnotesize Dim. 1};
    \node      at (rA1p4b-| dim0) {\footnotesize Dim. 2};
    \node      at (rA1p3 -| dim0) {\footnotesize Dim. 3};
    \node      at (A1p2  -| dim0) {\footnotesize Dim. 4};
    \node      at (A1    -| dim0) {\footnotesize Dim. 5};
    \node      at (M3    -| dim0) {\footnotesize Dim. 6};
    \draw [dashed, semithick] plot [smooth cycle] coordinates {
      (A4.45) (A4.125) (A1A4.190) (rA1A5.190) (rA7c2.260) (A2A4.260)  (A6.0)
    };
    \node[below right = -0.1cm and -0.1cm of A6]  (A4Id) {\scriptsize$V$\ref{eq:A4}};
    \draw [dashdotted, semithick] plot [smooth cycle] coordinates {
      (A3.0) (A3.125) (A1A3.180) (rA1p2A3b.120) (rA1p2A3b.250)  (rA3p2.250)
      (A2A3.10) (A1A3.0) };
    \node[above right = -0.2cm and -0.2cm of A3]  (A3Id) {\scriptsize$V$\ref{eq:A3}};
    \draw [dotted, semithick] plot [smooth cycle] coordinates {
      (rA1A3.30) (rA1A3.125) (rA1p2A3a.120)   (rA1p3A3.180) (rA1A3p2.270)
      (rA1A2A3.320) (rA1A2A3.27) (rA1p2A3a.20) (rA1A3.340) };
    \node[below left = 0.3cm and -1cm of rA1p3A3]  (rA1A3Id) {\scriptsize$V$\ref{eq:rA1A3}};
  \end{tikzpicture}
  \caption{Singularity specialisations (non-unstable strata)
  }
  \label{fig:specialize}
\end{figure}
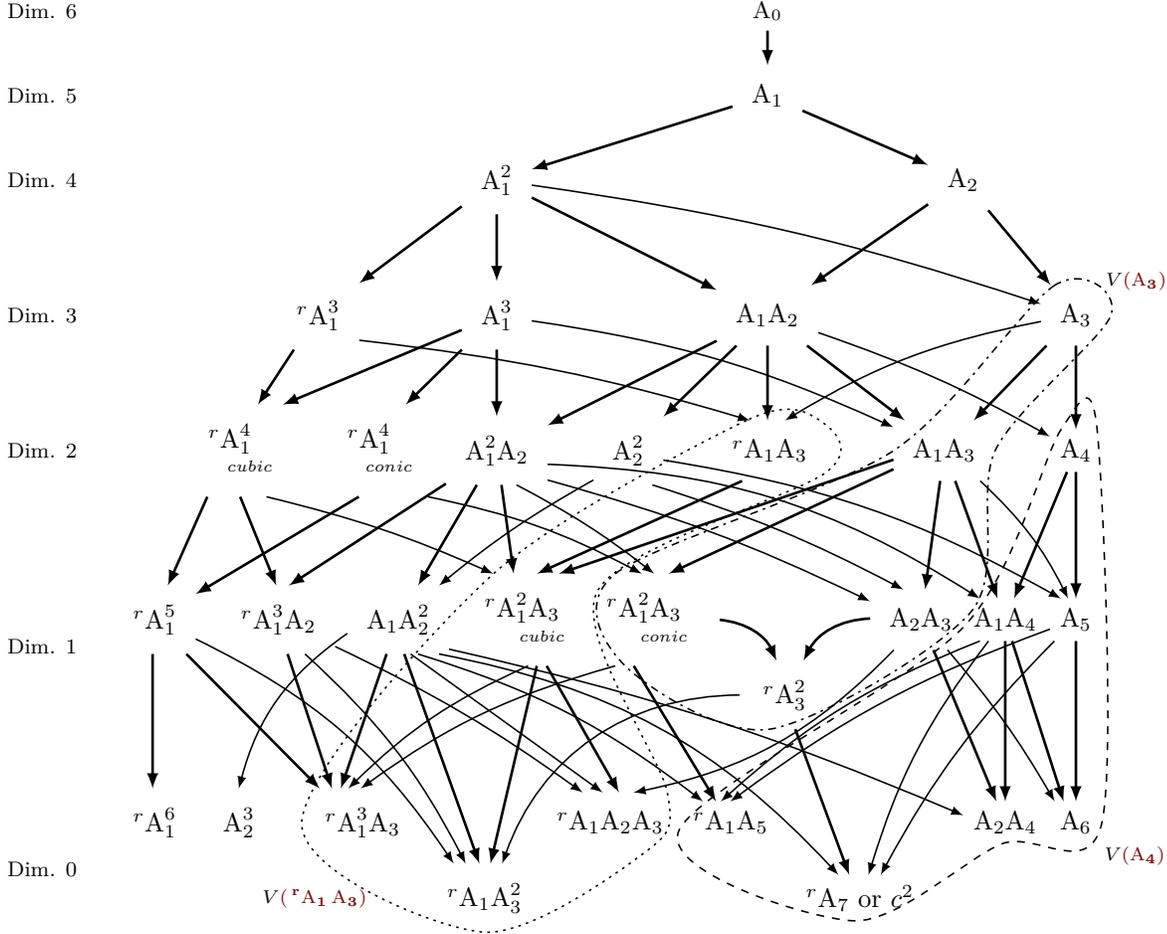

\subsection{Stable reduction of curves}
\label{sec:stable-reduct-curv}

Consider a smooth projective and geometrically connected curve $C$ over a
local field $K$. By
Deligne and Mumford~\cite{DM69}, there exists a finite extension $K'$ of $K$
such that $C \times_K K'$ is the generic fibre of a stable curve $\mathcal C$
over the integral closure of the ring of integers of $K$ inside $K'$.

The singularities of a stable curve are ordinary double points, and its
irreducible components of geometric genus 0 have at least three such double
points, counted with multiplicity.
The \textit{stable reduction} of a curve is the special fibre of such a stable
model, and the \textit{(stable) reduction type} is the stable type of the
stable reduction, including whether the reduction is hyperelliptic or not.

It is possible to obtain all stable types recursively by degeneracy.
In genus~3, without distinction between hyperelliptic and non-hyperelliptic,
this produces a list of 42 types,
which we show in Fig.~\ref{fig:labelledstablemodelgraph} of
Sec.~\ref{sec:stable-reduct-types}.  This graph is derived
from~\cite{BDDLL2023}. It lists the quartic stable reduction types ordered by
degeneration and codimension from top to bottom according to the
stratification of $\overline{\mathcal{M}_3}^{\:\mathrm{DM}}$ they induce.
In addition to small descriptive drawings that represent irreducible
components and their intersections with their genus indicated by the thickness of the component,
we use a compact naming convention where
\texttt{0}, \texttt{1}, \texttt{2} or \texttt{3} refer to curves of that
genus, \texttt{n} means node, \texttt{e} means elliptic curve, \texttt{m}
means multiplicative reduction of elliptic curves, \texttt{X=Y} means that
\texttt{X} and \texttt{Y} intersect at two points, \texttt{Z} is shorthand for two
distinct genus 0 components intersecting at two distinct points or
\texttt{0=0}, and \texttt{X\text{-}\text{-}\text{-}Y} (resp.
\texttt{X\text{-}\text{-}\text{-}\text{-}Y}) means that \texttt{X} and
\texttt{Y} intersect at 3 (resp. 4) points.  The remaining two types are
\texttt{(CAVE)}, 3 lines intersecting at 5 points, and \texttt{(BRAID)}, 4
lines intersecting at 6 points.
When the stable reduction is hyperelliptic, we put a $\mathtt{{H}}$ as
subscript, \textit{e.g.}\ $\mathtt{(1=1)_H}$.

\section{Singularities and invariant strata of plane quartics}
\label{sec:sing-invar-strata}

Let $F$ be a (possibly singular) plane quartic defined over an algebraically
closed field $\overline{K}$ of characteristic~0. Let
$\DO(F) = (I_3:I_6:\cdots:I_{27})$ be its Dixmier-Ohno invariants. Our aim is
to retrieve from them a list of candidates for the singularity type of
$F$. This amounts to determining in each case the equations satisfied by these
invariants.  The simplest example of such results is that a quartic is
singular if and only if its discriminant, \textit{i.e.}\ $I_{27}$, is zero. In
the following sections, we will apply some of these results to get information
on the reduction of a quartic defined over a non-archimedean local
field.\smallskip

Before going into more detail, we can already notice that it is not possible
to derive a tight classification of the singularity
types from the invariants. Already, quartics with singularity multiplicities greater than two are
unstable, so their invariants are identically zero. For those types that correspond
to the grey lines in Tab.~\ref{tab:inf} and Tab.~\ref{tab:rnf}, the question is
thus trivial. In fact, all non-unstable types with an $\sA_4$
singularity or more cannot be separated, as well as those of type $\sA_3$ and
its compounds ($\sA_1\sA_3$, $\sA_2\sA_3$, \textit{etc.}), or of type
$\,^r\sA_1\sA_3$ and its compounds ($\,^r\sA_1^2\sA_3$, $\,^r\sA_1\sA_2\sA_3$,
\textit{etc.})

\begin{example}\label{Ex:diffAtype} The plane quartic $x^2z^2+y^4+yx^3=0$ with
  a singularity of type $\sA_3$ at $(0:0:1)$ and the plane quartic
  $x^2z^2+y^4+y^3z+y^2z^2=0$ with singularities of type $\sA_3$ at $(1:0:0)$ and
  $\sA_1$ at $(0:0:1)$ have the same Dixmier-Ohno invariants $(18 : 0 : 1944 : 648 : 0 : 11664 : 0 : 0 : 0 : 0 : 0 : 0 : 0)$.
\end{example}

\subsection{The algorithm}
\label{sec:algorithm}

Suppose $\mathop\mathrm{char}(K) = 0$. Considering the Dixmier-Ohno invariants as variables, we can define a weighted
projective space $\PP^{12}_{3, 6, \ldots, 27}$ of dimension 13. We consider
the ideals $\ClId{\sA}$ generated by the relations that they satisfy on the
singularity strata $\sA$. These are defined in Sec.~\ref{sec:algebr-char}. By
$V\ClId{\sA}$ we denote the algebraic set defined by these equations. It
corresponds to quartics with singularity type $\sA$, but also possibly to some
other more singular strata below it in the specialisation graph of
Fig.~\ref{fig:specialize}.

The result is an algorithm for characterising singularities, which essentially
consists of checking whether the Dixmier-Ohno invariants of a given plane
quartic lie in $V\ClId{\sA}$ for the different strata $\sA$, starting with the smallest strata.
It yields Alg.~\ref{algo:singularities}. The following theorem states that this algorithm is correct.
\begin{theorem}\label{thm:invtosingtypes}
  Let $C\colon F=0$ be a plane quartic curve over an algebraically closed
  field $\overline{K}$ of characteristic~0. Then its singularity type is
  amongst the types returned by Alg.~\ref{algo:singularities} on input
  $\DO(F)$.
\end{theorem}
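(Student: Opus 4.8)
The plan is to derive the theorem from the algebraic characterisations of the invariant strata assembled in Section~\ref{sec:algebr-char} (Propositions~\ref{prop:dim1}--\ref{prop:dim4}) together with the combinatorics of the specialisation graph of Figure~\ref{fig:specialize}. Write $\sA_0$ for the singularity type of $C$ and set $p=\DO(F)\in\PP^{12}_{3,6,\ldots,27}$. By construction Alg.~\ref{algo:singularities} tests membership of $p$ in the loci $V\ClId{\sA}$ as $\sA$ ranges over the non-unstable strata, processed from the most singular upwards, and outputs the associated group of candidate types. The goal is therefore to show that $\sA_0$ lies in this output, and I would split this into a \emph{necessity} statement (the point $p$ passes the membership test attached to $\sA_0$) and a \emph{separation} statement (the algorithm does not descend past $\sA_0$ onto a strictly more singular stratum not equivalent to it).

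First I would establish necessity. Since the Dixmier--Ohno invariants are $\SL_3$-invariant and, up to the $\SL_3(\overline{K})$-action, every quartic of type $\sA$ is exhausted by the corresponding Hui normal form in Tables~\ref{tab:inf} and~\ref{tab:rnf}, it suffices to verify each generator of $\ClId{\sA}$ on that normal form. This is precisely the ``guess and prove'' content of the paper: substituting the normal form into the interpolated relations produces polynomial identities in the parameters $\alpha,\beta,\ldots$ over $\Q$ that are checked to vanish identically, which is the assertion of Propositions~\ref{prop:dim1}--\ref{prop:dim4}. Granting these, $p\in V\ClId{\sA_0}$, so $\sA_0$ survives its own test. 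For the $15$ strata whose singularities are only nodes or cusps this already closes the argument, because there the characterisations are sharp and the strata are pairwise separated by their ideals.

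The delicate point, and the one I expect to be the main obstacle, is the separation statement. I must rule out that $p\in V\ClId{\sA}$ for some stratum $\sA$ strictly below $\sA_0$ in Figure~\ref{fig:specialize} and not grouped with it, for otherwise the algorithm could return that deeper stratum and miss $\sA_0$. Here one needs the reverse reading of Propositions~\ref{prop:dim1}--\ref{prop:dim4}: each $V\ClId{\sA}$ equals the Zariski closure of the locus of invariants of type-$\sA$ quartics, which by continuity of $\DO$ is the union of the invariant loci of $\sA$ and of all its specialisations. Proving this containment is where the Gröbner basis computations over $\Q$ of Section~\ref{sec:sketches-proof} do the work, together with the completeness of Hui's classification (Remark~\ref{rem:huichar}), which guarantees that Figure~\ref{fig:specialize} is exhaustive so that no incomparable stratum can intervene. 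Given this, a type-$\sA_0$ curve cannot have its invariants in the locus of a genuinely deeper stratum, and the algorithm stops at the correct depth.

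It remains to account for the unavoidable ambiguities. The strata carrying an $\sA_4$ singularity or worse, those containing $\sA_3$, and those containing $\,^r\sA_1\sA_3$ are collected into the three indistinguishable sets $V$\ref{eq:A4}, $V$\ref{eq:A3} and $V$\ref{eq:rA1A3} delimited in Figure~\ref{fig:specialize}; inside each set the ideals coincide, so the algorithm does not attempt to separate them but returns the whole group, and Example~\ref{Ex:diffAtype} shows this ambiguity is genuine. Combining necessity (so $\sA_0$, or the group containing it, is detected) with separation (so nothing strictly deeper is wrongly detected), the output necessarily contains $\sA_0$, which is the statement of the theorem. The only cases needing individual attention are the transitions between neighbouring groups, which are handled by the degeneration data of Figure~\ref{fig:specialize} and the normal-form specialisations recorded in Remark~\ref{rem:huiforms}.
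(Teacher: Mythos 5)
Your skeleton---reduce to Hui normal forms, prove that a type-$\sA$ quartic passes the membership test for $\ClId{\sA}$ (necessity), then rule out that it also passes the test for a strictly deeper stratum not grouped with it (separation)---matches the architecture of the paper's proof, and your treatment of the indistinguishable groups $V$\ref{eq:A4}, $V$\ref{eq:A3}, $V$\ref{eq:rA1A3} is correct. The gap is in how you propose to obtain separation. You argue that $V\ClId{\sA'}$ is the Zariski closure of the invariant locus of type-$\sA'$ quartics, hence (by continuity of $\DO$ and exhaustiveness of the Hui classification) the union of the invariant loci of $\sA'$ and its specialisations, and you conclude that a type-$\sA_0$ curve cannot have its invariants in the locus of a genuinely deeper stratum. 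That conclusion does not follow: $\DO$ is not injective on singularity types (Ex.~\ref{Ex:diffAtype} exhibits an $\sA_3$ quartic and an $\sA_1\sA_3$ quartic with identical invariants), so even a correct identification of $V\ClId{\sA'}$ with the set of invariants of quartics of type $\sA'$ or deeper would not prevent a shallower quartic from sharing a point of that set. What is actually needed---and what the paper proves---is a statement about the \emph{fibres} of $\DO$: for each Hui normal form $F_{\sA}$ with its parameters, the condition $\DO(F_{\sA})\in V\ClId{\sA'}$ forces the parameters into a locus where $F_{\sA}$ acquires additional or worse singularities, so that it is no longer of type $\sA$.

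This fibre analysis is the technical core and is absent from your proposal. For the low-dimensional strata the paper carries it out by primary decomposition of $\ClId{\sA'}$ evaluated at $\DO(F_{\sA})$, followed by explicit determination and typing of the singular points of $F_{\sA}$ on each component (Sec.~\ref{sec:other-strata}). For $\sA_1$, $\sA_1^2$ and $\sA_2$ this direct route is computationally infeasible, and the paper substitutes the dedicated Lemmas~\ref{lem:singA1}--\ref{lem:singA2}: a single polynomial $\iota_{\sA}$ in the parameters whose vanishing is equivalent to the normal form having extra singularities, which one then checks reduces to zero modulo the ideal cut out in parameter space by each substratum condition. You would also want Lem.~\ref{lem:inctree} (the inclusion tree of the sets $V\ClId{\sA}$ mirrors Fig.~\ref{fig:specialize}) to keep the number of pairwise verifications manageable. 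Finally, your claim that for the $15$ node-and-cusp strata ``the characterisations are sharp and the strata are pairwise separated by their ideals'' assumes exactly the sharpness that the separation step is supposed to establish.
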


\begin{figure}[htbp]
  \centering
  \parbox{0.97\linewidth}{%
    \SetAlFnt{\small\sffamily}%
    \begin{algorithm}[H]%
      \footnotesize
      \caption{Plane quartic singularity types} %
      \label{algo:singularities}%
      \SetKwInOut{Input}{Input} \SetKwInOut{Output}{Output} %
      \Input{Dixmier-Ohno invariants $(I_3 : I_6:\cdots: I_{27})\in\PP^{12}_{3, 6, \ldots, 27}(\overline{K})$ of a
        plane quartic, {char}$(\overline{K})=0$}  %
      \Output{A set of possible singularity types} \BlankLine
      \BlankLine
      \tcp{Easy cases}
      If $I_{27}\ne 0$, then \KwRet{\{ Smooth \}}\;
      If $(I_3 : I_6:\cdots: I_{27}) = 0 $, then \KwRet{\{ Unstable \}}\;
      \BlankLine
      \tcp{Dimension 0}
      If $(I_3 : I_6:\cdots: I_{27})$ is in $V$\ref{eq:rA1p6},
      then \KwRet{$\{\,\,^r\sA_1^6\,\}$}\tcp*{Prop.~\ref{prop:dim0}}%
      If $(I_3 : I_6:\cdots: I_{27})$ is in $V$\ref{eq:A2p3},
      then \KwRet{$\{\,\sA_2^3\,\}$}\;%
      If $(I_3 : I_6:\cdots: I_{27})$ is in $V$\ref{eq:A4}, then
      \KwRet{\{\,$\sA_4$, $\sA_5$, $\sA_6$, $\sA_1\,\sA_4$,
        $\,\sA_2\,\sA_4$, $\,^r\sA_7$, $\,^r\sA_1\,\sA_5$, $c^2$\,\}}\;%
      If
      $(I_3 : I_6:\cdots: I_{27})$ is in $V$\ref{eq:rA1A3}, then
      \KwRet{\{\,$\,^r\sA_1\,\sA_3$, $\,^r\sA_1^2\,\sA_3$, $\,^r\sA_1\,\sA_2\,\sA_3$,
        $\,^r\sA_1\,\sA_3^2$,  $\,^r\sA_1^3\,\sA_3$\,\}}\;%
      \BlankLine
      \tcp{Dimension 1}
      If $(I_3 : I_6:\cdots: I_{27})$ is in $V$\ref{eq:rA1p5},
      then \KwRet{\{\,$\,^r\sA_1^5$ \,\}}\tcp*{Prop.~\ref{prop:dim1}}%
      If $(I_3 : I_6:\cdots: I_{27})$ is in $V$\ref{eq:rA1p3A2},
      then \KwRet{\{\,$\,^r\sA_1^3\,\sA_2$ \,\}}\;
      If $(I_3 : I_6:\cdots: I_{27})$ is in $V$\ref{eq:A1A2p2},
      then \KwRet{\{\,$\sA_1\,\sA_2^2$ \,\}}\;
      If $(I_3 : I_6:\cdots: I_{27})$ is in $V$\ref{eq:A3},
      then \KwRet{\{\,$\sA_3$, $\sA_1\,\sA_3$, $\sA_2\,\sA_3$,
        $\,^r\sA_3^2$, $\,^r\sA_1^2\,\sA_3$ \,\}}\;
      \BlankLine
      \tcp{Dimension 2}
      If $(I_3 : I_6:\cdots: I_{27})$ is in $V$\ref{eq:A1p2A2},
      then \KwRet{\{\,$\sA_1^2\,\sA_2$ \,\}}\tcp*{Prop.~\ref{prop:dim2}}%
      If $(I_3 : I_6:\cdots: I_{27})$ is in $V$\ref{eq:A2p2},
      then \KwRet{\{\,$\sA_2^2$ \,\}}\;
      If $(I_3 : I_6:\cdots: I_{27})$ is in $V$\ref{eq:rA1p4a},
      then \KwRet{\{\,$\,^r\sA_1^4$ (line and cubic) \,\}}\;
      If $(I_3 : I_6:\cdots: I_{27})$ is in $V$\ref{eq:rA1p4b},
      then \KwRet{\{\,$\,^r\sA_1^4$ (two conics) \,\}}\;
      \BlankLine
      \tcp{Dimension 3}
      If $(I_3 : I_6:\cdots: I_{27})$ is in $V$\ref{eq:A1p3},
      then \KwRet{\{\,$\sA_1^3$ \,\}}\tcp*{Prop.~\ref{prop:dim3}}%
      If $(I_3 : I_6:\cdots: I_{27})$ is in $V$\ref{eq:A1A2},
      then \KwRet{\{\,$\sA_1\,\sA_2$ \,\}}\;
      If $(I_3 : I_6:\cdots: I_{27})$ is in $V$\ref{eq:rA1p3},
      then \KwRet{\{\,$\,^r\sA_1^3$ \,\}}\;
      \BlankLine
      \tcp{Dimension 4}
      If $(I_3 : I_6:\cdots: I_{27})$ is in $V$\ref{eq:A2},
      then \KwRet{\{\,$\sA_2$ \,\}}\tcp*{Prop.~\ref{prop:dim4}}%
      If $(I_3 : I_6:\cdots: I_{27})$ is in $V$\ref{eq:A1p2},
      then \KwRet{\{\,$\sA_1^2$ \,\}}\;
      \BlankLine
      \tcp{Dimension 5}
      \KwRet{$\{\sA_1\}$};
    \end{algorithm}
  }
\end{figure}

The proof is based on Prop.~\ref{prop:dim0} to Prop.~\ref{prop:dim5}, given in
Sec.~\ref{sec:algebr-char}. We conjecture that Thm.~\ref{thm:invtosingtypes}
is also true in positive characteristic $p$ for $p$ not too small
(\textit{cf.} Rem.~\ref{rmk:characteristichypotheses}).
\begin{conjecture}[positive characteristic hypothesis]\label{conj:poschar}
  Alg.~\ref{algo:singularities} is valid for any algebraically closed field $\overline{K}$ of
  characteristic $p$ for all $p \neq 2,3,5,7$.
\end{conjecture}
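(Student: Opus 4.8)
The final statement is a conjecture, so the plan is to describe a program that would upgrade the characteristic-$0$ proof of Thm.~\ref{thm:invtosingtypes} to all $p\neq 2,3,5,7$, and to isolate the one genuinely open input. I would split the problem into two logically independent ingredients. The first is \emph{geometric}: on the GIT-semistable locus, the point $\DO(F)$ must determine the closed $\SL_3$-orbit in its fibre, so that quartics sharing the same Dixmier--Ohno invariants in characteristic $p$ are exactly those identified in the char-$0$ analysis, up to the unavoidable coarsening recorded by the sets $V$\ref{eq:A4}, $V$\ref{eq:A3} and $V$\ref{eq:rA1A3} of Fig.~\ref{fig:specialize}. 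The second is \emph{algebraic}: the explicit ideals $\ClId{\sA}$ of Prop.~\ref{prop:dim0}--Prop.~\ref{prop:dim5} must cut out the same loci $V\ClId{\sA}$ after reduction modulo $p$.

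For the algebraic ingredient I would work integrally rather than over $\Q$. The Dixmier--Ohno invariants are defined over $\Z[\tfrac{1}{2\cdot 3}]$, so each relation in Prop.~\ref{prop:dim0}--Prop.~\ref{prop:dim5} is an element of a polynomial ring over $\Z[\tfrac16]$ in the weighted variables. The assertion that every Hui normal form of a given type satisfies its relations is a polynomial identity over $\Z[\tfrac16]$, verified by evaluation; such identities reduce modulo any $p\neq 2,3$, so this direction requires no further argument. The converse containments -- that the relations force membership in the correct union of strata -- rest on the Gröbner basis computations of Sec.~\ref{sec:sketches-proof}. Rerunning these over $\Z[\tfrac16]$, or tracking the coefficient arithmetic of the existing $\Q$-computation, produces a finite \emph{bad set} $S$ of primes, namely those dividing a leading coefficient inverted during a reduction step. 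By the standard specialisation principle for Gröbner bases, for every $p\notin S\cup\{2,3,5,7\}$ the reduced basis over $\F_p$ is the reduction modulo $p$ of the one over $\Q$; hence the ideal-theoretic statements underlying the propositions, and therefore the loci $V\ClId{\sA}$ together with their inclusions along Fig.~\ref{fig:specialize}, persist verbatim in characteristic $p$. The finitely many primes of $S$ beyond $2,3,5,7$ would then be disposed of individually by rerunning the \textsc{magma} verification of~\cite{BGit23} in the relevant characteristic, exactly as suggested in Rem.~\ref{rmk:characteristichypotheses}.

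The geometric ingredient is the main obstacle, and it is precisely the open problem flagged in Rem.~\ref{rmk:DOcharp}: it is not known that the Dixmier--Ohno invariants generate the full invariant algebra, equivalently that they separate semistable orbits, in characteristic $p>7$. The plan here would be to leverage that the Dixmier invariants alone form an HSOP for $p>7$ (with $I_9$ replaced by $I_9-J_9$ at the four primes $19,47,277,523$), so that the full invariant algebra is a finite module over the polynomial subalgebra they generate; one would then show that the Ohno invariants continue to generate this module after reduction, again by exhibiting char-$0$ module relations and checking that they reduce well outside a finite set of primes. Granting this generation statement for a given $p$, the morphism $\DO$ has the same fibres on the semistable locus as in characteristic $0$, and combining it with the algebraic ingredient yields correctness of Alg.~\ref{algo:singularities}: running the algorithm on $\DO(F)$ returns a list of strata containing the true singularity type, because membership of $\DO(F)$ in each $V\ClId{\sA}$ is detected by the same equations as over $\Q$. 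The difficulty is that this completeness of the Dixmier--Ohno invariants is itself only conjectural for $p>7$; without it one cannot exclude that the fibres of $\DO$ grow in some characteristic, which is exactly why the statement is posed as Conj.~\ref{conj:poschar} rather than as a theorem.
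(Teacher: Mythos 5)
The statement you address is a conjecture, and the paper accordingly offers no proof: its only support is the automated re-verification of Prop.~\ref{prop:dim0}--Prop.~\ref{prop:dim5} in \textsc{magma} for every prime up to $100$, together with the remarks (Rem.~\ref{rmk:characteristichypotheses}, Rem.~\ref{rmk:DOcharp}, Rem.~\ref{rmk:gbcharp}) explaining why the characteristic-$0$ argument does not transfer. Your two-ingredient decomposition matches those remarks, and you correctly isolate the genuinely open input --- that the Dixmier--Ohno invariants generate the invariant algebra (equivalently, that the map $\DO$ realises the GIT quotient on the semistable locus) for $p>7$ --- as the reason the statement must remain a conjecture. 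As a roadmap, your proposal is sound and rather more articulated than what the paper itself records.

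Two concrete cautions, both visible in the paper. First, your Gr\"obner-specialisation step (``the loci $V\ClId{\sA}$ together with their inclusions along Fig.~\ref{fig:specialize} persist verbatim outside a finite bad set $S$'') is correct as an abstract principle, but the remark following Lem.~\ref{lem:inctree} shows that the bad set is nonempty and contains small primes: over $\Q$ one has $V\ref{eq:A3} \cap V\ref{eq:A1A2p2} = V\ref{eq:A4} \cup V\ref{eq:rA1A3}$, whereas modulo $p=11$ only $V\ref{eq:A4}$ survives. So ``persist verbatim'' fails at specific primes well below any plausible effective bound, and for computations requiring hours and several gigabytes over $\Q$, tracking the coefficient arithmetic to exhibit $S$ explicitly is likely impractical --- which is precisely why the authors fall back on per-prime re-verification rather than a uniform specialisation argument. (One would also need to check whether the per-prime failure of an inclusion such as the one at $p=11$ merely perturbs the proof strategy of Sec.~\ref{sec:sketches-proof} or actually changes the algorithm's output at that prime; your write-up treats the inclusion tree as if it were only a bookkeeping device, but the paper does not settle this.) Second, your program silently assumes that Hui's stratification and normal forms remain valid in characteristic $p$; this holds for $p>7$ (Rem.~\ref{rem:huichar}) but is a genuine third ingredient --- indeed it is one of the two stated reasons for excluding $p\le 7$, alongside the integral definition of the invariants --- and it cannot be omitted, since the entire verification strategy runs through evaluations on Hui normal forms and through the completeness of the list of singularity types.
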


In support of this conjecture, we have automated the verification of
Prop.~\ref{prop:dim0} to Prop.~\ref{prop:dim5} with \textsc{magma} and applied
it for all primes up to $100$. The criterion $p>7$ is necessary for the list
of quartic singularities to be complete (\textit{cf.}  Rem.~\ref{rem:huichar})
and for $\DO(F)$ to be defined differently for $p = 2$, $3$, $5$ and $7$
(\textit{cf.}  Rem.~\ref{rmk:DOcharp}).

\subsection{Algebraic characterisations}
\label{sec:algebr-char}

We present the propositions necessary for the proof of
Thm.~\ref{thm:invtosingtypes} by increasing dimensions.
Here, it is still assumed that the field of definition is an algebraically
closed field $\overline{K}$ of characteristic zero.

We have to understand ``dimension'' as the number of variables that
parameterise the Hui normal forms for a singularity type stratum
(\textit{cf.}\ Tab.~\ref{tab:inf} and Tab.~\ref{tab:rnf}). In the following,
the singularity loci will be viewed as the zero set of an ideal $\ClId{\sA}$
in the projective space $\PP^{12}_{3, 6, \ldots, 27}$.  The dimension of the
singularity loci will then become the Krull dimension of the quotient of the
coordinate ring of the weighted $\PP^{12}$ by the ideal $\ClId{\sA}$, minus
one. Since for singularity type $\sA_3$ (resp. $\sA_4$ and $\,^r\sA_1\sA_3$),
the ideal is of dimension 2 (resp.~1), this singularity is not characterised
as one would expect in Prop.~\ref{prop:dim3} (resp.~Prop.~\ref{prop:dim2})
with the other singularity types of dimension 3 (resp.~2) , but in
Prop.~\ref{prop:dim1} (resp.~Prop.~\ref{prop:dim0}) with the singularity types
of dimension 1 (resp.~0).
In terms of invariants (see Fig.~\ref{fig:specialize}), it's not possible to
distinguish the stratum $\sA_3$ (resp. $\sA_4$ and $\,^r\sA_1\sA_3$) from the
stratum $\,^r\sA_3^2$ (resp. $c^2$ and $\,^r\sA_1\sA_3^2$).

For lack of space, and also because of the technical nature of such results,
we present these 6 propositions in as compact a form as possible.
In particular we do not give the polynomials that define these
ideals explicitly here, but they are available at~\cite[file
\texttt{reductioncriteria.m}]{BGit23}.
The proof of these statements is of a computational nature.
We summarise its main steps in Sec.~\ref{sec:sketches-proof}.

\begin{strata}{0}
  \begin{proposition}\label{prop:dim0}
    Let $C$ be a plane quartic given by a GIT-semi-stable ternary quartic $F$
    over an algebraically closed field $\overline{K}$ of characteristic~0,
    then $C$ is of singularity type
    \begin{itemize}
    \item $\,^r\sA_1^6$ if and only if\\[-0.1cm]
      \begin{tiny}
        \begin{equation*}
          \label{eq:rA1p6}\tag*{\textrm{${\ClId{\,^r\sA_1^6}}$}}%
          \begin{minipage}[t]{1.0\linewidth}\footnotesize
            \centering\begin{math}\displaystyle%
              \DO(F) = \left(1: \frac{\text{-}1}{144}: \frac{1}{9}: \frac{\text{-}1}{3}: \frac{\text{-}4}{81}: \frac{\text{-}1}{18}:
                \frac{\text{-}1}{972}: \frac{1}{36}: \frac{1}{243}: \frac{1}{27}:
                \frac{1}{162}: \frac{\text{-}7}{144}:
                0\right)\,,
            \end{math}
          \end{minipage}
        \end{equation*}
      \end{tiny}
    \item $\sA_2^3$ if and only if\\[-0.1cm]
      \begin{tiny}
        \begin{equation*}
          \label{eq:A2p3}\tag*{\textrm{${\ClId{\sA_2^3}}$}}%
          \begin{minipage}[t]{1.0\linewidth}\footnotesize
            \centering\begin{math}\displaystyle%
              \DO(F) = \left(1: {\frac {\text{-}1}{108}}:{\frac
                  {97}{324}}:{\frac {\text{-}121}{324}}:{\frac {\text{-}
                    325}{2916}}:{\frac {\text{-}47}{324}}:\right.
              \left.{\frac {121}{11664}}:{\frac {7}{1296}}: {\frac
                  {1595}{104976}}:{\frac {985}{34992}}:{\frac {637}{78732}}:{ \frac
                  {\text{-}1057}{314928}}:0 \right)\,,
            \end{math}
          \end{minipage}
        \end{equation*}
      \end{tiny}
    \item $\sA_4$, $\sA_5$, $\sA_6$, $\sA_1\,\sA_4$, $\sA_2\,\sA_4$,
      $\,^r\sA_7$, $\,^r\sA_1\,\sA_5$ or $c^2$ if and only if\\[-0.1cm]
      \begin{tiny}
        \begin{equation*}
          \label{eq:A4}\tag*{\textrm{${\ClId{\sA_4}}$}}%
          \begin{minipage}[t]{1.0\linewidth}\footnotesize
            \centering\begin{math}\displaystyle%
              \DO(F) = \left(
                1:{\frac {1}{180}}:{\frac {49}{36}}:{\frac {49}{60}}:{\frac {343}{
                    1620}}:{\frac {49}{36}}:\right.\left.
                {\frac {1715}{3888}}:{\frac {343}{3600}}:{
                  \frac {2401}{3888}}:{\frac {2401}{10800}}:{\frac {343}{1620}}:{\frac {
                    2401}{720}}:0\right)\,,
            \end{math}
          \end{minipage}
        \end{equation*}
      \end{tiny}
    \item $\,^r\sA_1\,\sA_3$, $\,^r\sA_1^2\,\sA_3$, $\,^r\sA_1\,\sA_2\,\sA_3$,
      $\,^r\sA_1\,\sA_3^2$ or $\,^r\sA_1^3\,\sA_3$ if and only if\\[-0.1cm]
      \begin{tiny}
        \begin{equation*}
          \label{eq:rA1A3}\tag*{\textrm{${\ClId{\,^r\sA_1\,\sA_3}}$}}%
          \begin{minipage}[t]{0.95\linewidth}\footnotesize
            \centering\begin{math}\displaystyle%
              \DO(F) = \left(
                1:{\frac {\text{-}1}{144}}:{\frac {7}{36}}:\frac{1}{6}:{\frac {\text{-}17}{1296}}:{\frac {7
                  }{288}}:\right.\left.
                {\frac {625}{31104}}:{\frac {\text{-}25}{1152}}:{\frac {775}{62208}}:
                {\frac {\text{-}35}{2304}}:{\frac {\text{-}1}{20736}}:{\frac {1}{256}}:0
              \right)\,.
            \end{math}
          \end{minipage}
        \end{equation*}
      \end{tiny}
    \end{itemize}
  \end{proposition}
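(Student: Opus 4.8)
The plan is to prove each of the four ``if and only if'' statements separately, treating the two implications by different means. For the forward implication (a quartic of one of the listed types has the stated invariants) I would argue by direct computation on Hui's normal forms. Every quartic of a given singularity type is $\SL_3(\overline{K})$-equivalent to the corresponding normal form of Tab.~\ref{tab:inf} or Tab.~\ref{tab:rnf}, and $\DO$ is $\SL_3$-invariant, so it suffices to evaluate the thirteen Dixmier--Ohno invariants on these normal forms. For the types with no free parameters (for instance $\sA_6$, $\sA_2\sA_4$, $c^2$ and $\,^r\sA_7$) this is a single substitution. For the types carrying free parameters that nevertheless land in a dimension-$0$ group --- most strikingly $\sA_4$, with two parameters $\alpha,\beta$, and $\sA_1\sA_4$ --- I would compute $\DO$ symbolically in the parameters and check that it is \emph{constant}, equal to the stated point of $\PP^{12}_{3,6,\ldots,27}$. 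This constancy is precisely the drop in Krull dimension remarked on before the statement, and it is what allows these positive-dimensional strata to be pinned down by a single point. Carrying this out for every type in the four groups establishes the ``only if'' direction and simultaneously shows that all members of a given group share one invariant point.

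The converse is the substantive direction: it asserts that the fibre of $\DO$ over the stated point $P$ is, among GIT-semi-stable quartics, exactly the union of the listed strata. I would first reduce to a finite problem. By Hui's classification every singular semi-stable quartic lies, up to $\SL_3$, in one of the finitely many non-unstable strata recorded in Fig.~\ref{fig:specialize}. It then suffices to show, for each stratum $S$ that is \emph{not} among those listed, that the system $\DO(F_S)=P$ in the Hui parameters of $S$ has no solution in the open part of $S$; equivalently, that any such solution forces $F_S$ to acquire the extra degeneration (an additional singular point, or a special value of a parameter in the sense of Rem.~\ref{rem:huiforms}) that carries it into one of the listed strata. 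The specialisation graph of Fig.~\ref{fig:specialize} organises this bookkeeping, since a quartic whose invariants equal $P$ and which is not already of a listed type would have to degenerate along an edge of the graph into a listed dimension-$0$ stratum.

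The computational engine for both the constancy assertions and the converse is a Gröbner basis and elimination calculation over $\Q$, performed with \MAGMA\ as in the scripts of~\cite{BGit23}; since the Dixmier--Ohno invariants are defined over $\Z[\tfrac{1}{6}]$ and we work in characteristic~$0$, computing over $\Q$ is harmless, and rerunning the same computation modulo $p$ is what supports Conj.~\ref{conj:poschar}. I expect the main obstacle to be the converse for the ``near-miss'' strata, namely those of positive parameter-dimension whose $\DO$-image genuinely passes through $P$: for these it is not enough to observe that the image meets $P$, and one must instead saturate the parameter ideal by the conditions cutting out the open stratum (absence of additional singularities) and verify that the preimage of $P$ is then empty away from the degeneration locus. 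A secondary but necessary check is that the four points attached to $\,^r\sA_1^6$, $\sA_2^3$, the $\sA_4$-group and the $\,^r\sA_1\sA_3$-group are pairwise distinct in $\PP^{12}_{3,6,\ldots,27}$, so that membership in a single group is unambiguous and the ordering exploited by Alg.~\ref{algo:singularities} is well defined.
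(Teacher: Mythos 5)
Your proposal matches the paper's proof in both structure and substance: the forward direction is the same direct evaluation of $\DO$ on Hui normal forms (with the observation that the positive-dimensional families such as $\sA_4$ collapse to a single invariant point), and the converse is the same finite case check over the non-unstable strata of Fig.~\ref{fig:specialize}, carried out by Gr\"obner-basis computations that show any normal form whose invariants land on one of the four points must acquire the extra singularities placing it in a listed stratum. The paper implements your ``saturation'' step concretely via primary decomposition of the specialised parameter ideal followed by explicit determination of the singular points and their types (and, for the $\sA_1$, $\sA_1^2$, $\sA_2$ families, via the auxiliary discriminant polynomials $\iota_{\sA}$ of Lem.~\ref{lem:singA1}--\ref{lem:singA2}), and your pairwise-distinctness check is subsumed in Lem.~\ref{lem:inctree}.
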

\end{strata}

\begin{strata}{1}
  \begin{proposition}\label{prop:dim1}
    Let $C$ be a plane quartic given by a GIT-semi-stable ternary quartic $F$
    over an algebraically closed field $\overline{K}$ of characteristic~0 that
    is not in a stratum of dimension 0, then $C$ is of singularity type
    \begin{tiny}
      \begin{flalign}
        \label{eq:rA1p5}\tag*{\textrm{$\ClId{{\,^r\sA_1^5}}$}}
        \begin{minipage}[t]{0.89\linewidth}\normalsize
          \begin{itemize}
          \item $\,^r\sA_1^5$ if and only if $\DO(F)$ is in the algebraic set
            $V$\ref{eq:rA1p5} of
            an ideal~\ref{eq:rA1p5} defined by 13 polynomials whose degree
            profile is $6$, $9$, $12$, $15^2$, $18^2$, $21^3$, $27^2$ and
            $36$,
          \end{itemize}
        \end{minipage}\\
        \label{eq:rA1p3A2}\tag*{\textrm{${\ClId{\,^r\sA_1^3\,\sA_2}}$}}
        \begin{minipage}[t]{0.89\linewidth}\normalsize
          \begin{itemize}
          \item $\,^r\sA_1^3\,\sA_2$ if and only if $\DO(F)$ is in the
            algebraic set of an ideal~\ref{eq:rA1p3A2} defined by 11
            polynomials whose degree profile is $6$, $9$, $12^2$, $15^2$,
            $18^2$, $21^2$ and $27$,
          \end{itemize}
        \end{minipage}\\
        \label{eq:A1A2p2}\tag*{\textrm{${\ClId{\sA_1\,\sA_2^2}}$}}%
        \begin{minipage}[t]{0.89\linewidth}\normalsize
          \begin{itemize}
          \item $\sA_1\,\sA_2^2$ if and only if $\DO(F)$ is in the algebraic
            set $V$\ref{eq:A1A2p2} of an ideal~\ref{eq:A1A2p2} defined by 14
            polynomials whose degree profile is $12$, $15^2$, $18^5$, $21^4$,
            $24$ and $27$,
          \end{itemize}
        \end{minipage}\\
        \label{eq:A3}\tag*{\textrm{${\ClId{\sA_3}}$}}%
        \begin{minipage}[t]{0.89\linewidth}\normalsize
          \begin{itemize}
          \item $\sA_3$, $\sA_1\,\sA_3$, $\sA_2\,\sA_3$, $\,^r\sA_3^2$ or
            $\,^r\sA_1^2\,\sA_3$ if and only if $\DO(F)$ is in the algebraic
            set $V$\ref{eq:A3} of an ideal~\ref{eq:A3} defined by 11
            polynomials whose degree profile is $9$, $12^2$, $15^2$, $18^3$,
            $21^2$ and $27$\,.
          \end{itemize}
        \end{minipage}
      \end{flalign}
    \end{tiny}
  \end{proposition}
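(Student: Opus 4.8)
The plan is to treat all four items of Proposition~\ref{prop:dim1} by the same two-step scheme, exploiting that in characteristic~$0$ the Dixmier--Ohno invariants generate the whole invariant algebra (Rem.~\ref{rmk:DOcharp}), so that $\DO$ realises the GIT quotient map on the semi-stable locus of $\mathcal{F}_{3,4}$ by $\SL_3$, and two semi-stable forms have the same image precisely when their orbit closures meet. Each ideal $\ClId{T}$ is defined as the elimination ideal obtained by evaluating $\DO$ on the corresponding Hui normal form of Tab.~\ref{tab:inf}--\ref{tab:rnf} and eliminating the parameters $\alpha,\beta,\dots$; by construction $V\ClId{T}$ is then the Zariski closure of the image of the stratum under $\DO$.

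First I would establish necessity. If $C$ has one of the listed types then it is $\SL_3(\overline K)$-equivalent to a Hui normal form $F_T(\alpha,\dots)$, and since $\DO$ is $\SL_3$-invariant we get $\DO(F)=\DO(F_T(\alpha,\dots))\in V\ClId{T}$ directly from the definition of the elimination ideal. For the grouped fourth item this has to be verified separately for each of $\sA_3$, $\sA_1\,\sA_3$, $\sA_2\,\sA_3$, $\,^r\sA_3^2$ and $\,^r\sA_1^2\,\sA_3$: one substitutes each normal form into the generators of $\ClId{\sA_3}$ and checks that they vanish, i.e.\ one checks the containments $V\ClId{S}\subseteq V\ClId{\sA_3}$ for these five strata. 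This is the invariant-theoretic shadow of the specialisation graph (Fig.~\ref{fig:specialize}), where the five strata are $S$-equivalent tacnodal degenerations sharing a common closed orbit, which is exactly why $\DO$ cannot separate them (cf.\ Ex.~\ref{Ex:diffAtype}).

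The substantial direction is sufficiency, where I would use the completeness of Hui's classification. Every GIT-semi-stable quartic not lying in a dimension-$0$ stratum is equivalent to a Hui normal form of one of the finitely many non-unstable types of dimension $\geq 1$. For each such type $S$ that is \emph{not} among those attached to $V\ClId{T}$, I would exhibit an explicit generator $g\in\ClId{T}$ that does not vanish identically on the parametrised invariants $\DO(F_S(\alpha,\dots))$, witnessing $V\ClId{S}\not\subseteq V\ClId{T}$; a single numeric specialisation of the parameters certifies this non-vanishing. The one remaining subtlety is that a form of type $S$ at a non-generic parameter value might still land in $V\ClId{T}$; but by Rem.~\ref{rem:huiforms} such values create additional singular points, so the form then has a strictly more degenerate type $S'$ handled separately (and, for the borderline cases, removed by the standing hypothesis that $C$ lies in no dimension-$0$ stratum). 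Assembling these checks shows that a semi-stable $F$ with $\DO(F)\in V\ClId{T}$ and $C$ not in a dimension-$0$ stratum can only have a type in the prescribed list.

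The main obstacle is computational rather than conceptual. Proving $V\ClId{S}\not\subseteq V\ClId{T}$ for all the incomparable strata, and in particular carrying out the eliminations and the Krull-dimension computations in the weighted projective space $\PP^{12}_{3,6,\dots,27}$ over $\Q$, is heavy; this is where the \textsc{magma} computation summarised in Sec.~\ref{sec:sketches-proof} (several gigabytes of memory, cf.\ \cite[file \texttt{G3SingularProof.m}]{BGit23}) enters. A delicate point worth isolating is the dimension drop for $\sA_3$ (and, in the neighbouring propositions, for $\sA_4$ and $\,^r\sA_1\,\sA_3$): although the Hui normal form of $\sA_3$ carries three parameters, strict semi-stability collapses its $S$-equivalence class so that $V\ClId{\sA_3}$ is only one-dimensional, which is exactly why $\sA_3$ must be characterised here, alongside $\,^r\sA_3^2$, rather than among the dimension-$3$ strata of Prop.~\ref{prop:dim3}.
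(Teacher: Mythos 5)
Your overall skeleton matches the paper's: necessity is the easy substitution of Hui normal forms into the generators, and sufficiency reduces to showing that no quartic of an excluded type can have invariants landing in $V\ClId{T}$. However, there is a genuine gap in your treatment of the non-generic parameter values, which is precisely where the real content of the paper's proof lives. You assert that if a form $F_S(\alpha,\dots)$ of type $S$ has parameters lying in the locus where $\DO(F_S)\in V\ClId{T}$, then ``by Rem.~\ref{rem:huiforms} such values create additional singular points.'' Rem.~\ref{rem:huiforms} does not say this: it only records that for certain normal forms the designated singular points keep their type for all parameters, so that any degeneration must come from extra singular points, and that Hui lists the explicit parameter values where degeneration occurs. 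It gives no link between the locus $\{\,\text{params}: \DO(F_S)\in V\ClId{T}\,\}$ and the locus where $F_S$ actually degenerates; establishing that the former is contained in the latter is exactly the hard step. The paper closes it computationally: it takes a primary decomposition of the ideal obtained by evaluating the generators of $\ClId{T}$ at $\DO(F_S(\alpha,\dots))$, reduces the equations of the singular locus of $F_S$ modulo each component, solves for the singular points by further Gr\"obner basis computations, and determines the Arnol\cprime d type of each point, thereby verifying that on every component the form acquires worse singularities. Without this (or an equivalent argument), your sufficiency direction is an assertion, not a proof.

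Two further, more minor, divergences from the paper. First, you define $\ClId{T}$ as the elimination ideal of the parametrised invariants, so that $V\ClId{T}$ is the closure of the image of the stratum ``by construction''; the paper's ideals are obtained by interpolation and the proof is deliberately arranged to be independent of their provenance, so to prove the proposition \emph{as stated} (with the given generator counts and degree profiles) you would still have to reconcile your elimination ideals with the listed ones. Second, the paper first proves Lem.~\ref{lem:inctree}, showing that the eighteen algebraic sets form an inclusion tree mirroring Fig.~\ref{fig:specialize}; this organises and substantially reduces the pairwise non-containment checks that you propose to do one generator at a time. Your GIT framing of the $\sA_3$ dimension drop via collapsing $S$-equivalence classes of strictly semi-stable tacnodal quartics is a nice heuristic consistent with Ex.~\ref{Ex:diffAtype}, but it is not load-bearing in either argument.
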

\end{strata}

\begin{strata}{2}
  \begin{proposition}\label{prop:dim2}
    Let $C$ be a plane quartic given by a GIT-semi-stable ternary quartic $F$
    over an algebraically closed field $\overline{K}$ of characteristic~0 that
    is not in a stratum of dimension less than or equal to 1, then $C$ is of
    singularity type
    \begin{tiny}
      \begin{flalign}
        \label{eq:A1p2A2}\tag*{\textrm{${\ClId{\sA_1^2\,\sA_2}}$}}%
        \begin{minipage}[t]{0.89\linewidth}\normalsize
          \begin{itemize}
          \item $\sA_1^2\,\sA_2$ if and only if $\DO(F)$ is in the algebraic
            set $V$\ref{eq:A1p2A2} of an ideal~\ref{eq:A1p2A2} defined by 13
            polynomials whose degree profile is $12$, $15$, $18^2$, $21^2$,
            $24^2$, $27^3$ and $30^2$,
          \end{itemize}
        \end{minipage}\\
        \label{eq:A2p2}\tag*{\textrm{${\ClId{\sA_2^2}}$}}%
        \begin{minipage}[t]{0.89\linewidth}\normalsize
          \begin{itemize}
          \item $\sA_2^2$ if and only if $\DO(F)$ is in the algebraic set
            $V$\ref{eq:A2p2} of an ideal~\ref{eq:A2p2} defined by 13
            polynomials whose degree profile is $12$, $15$, $18^3$, $21^3$,
            $24^2$, $27^2$ and $30$,
          \end{itemize}
        \end{minipage}\\
        \label{eq:rA1p4a}\tag*{\textrm{${\ClId{\,^r\sA_1^4\:_{cub}}}$}}%
        \begin{minipage}[t]{0.89\linewidth}\normalsize
          \begin{itemize}
          \item $\,^r\sA_1^4$ (line and cubic) if and only if $\DO(F)$ is in
            the algebraic set $V$\ref{eq:rA1p4a} of an ideal~\ref{eq:rA1p4a}
            defined by 10 polynomials whose degree profile is $6$, $9$, $12$,
            $15^2$, $18$, $21^2$, $27$ and $36$.
          \end{itemize}
        \end{minipage}\\
        \label{eq:rA1p4b}\tag*{\textrm{${\ClId{\,^r\sA_1^4\:_{con}}}$}}%
        \begin{minipage}[t]{0.89\linewidth}\normalsize
          \begin{itemize}
          \item $\,^r\sA_1^4$ (two conics) if and only if $\DO(F)$ is in the
            algebraic set $V$\ref{eq:rA1p4b} of an ideal~\ref{eq:rA1p4b}
            defined by 22 polynomials whose degree profile is $15$, $18^4$,
            $21^5$, $24^4$, $27^4$, $30^2$, $33$ and $36$.
          \end{itemize}
        \end{minipage}
      \end{flalign}
    \end{tiny}
  \end{proposition}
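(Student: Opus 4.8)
The plan is to prove each of the four equivalences by combining a direct symbolic computation (the ``only if'' direction) with a GIT orbit-separation argument (the ``if'' direction), following the pattern used for the lower strata. Write $F_\sA(\alpha,\beta)$ for the Hui normal form of the relevant type $\sA$, where $\sA$ ranges over $\sA_1^2\,\sA_2$, $\sA_2^2$, and the two configurations of ${}^r\sA_1^4$ (line-and-cubic, two conics) from Tab.~\ref{tab:inf} and Tab.~\ref{tab:rnf}, and let $\phi_\sA\colon \mathbb{A}^2 \to \PP^{12}_{3,6,\ldots,27}$ be the map $(\alpha,\beta)\mapsto \DO(F_\sA(\alpha,\beta))$. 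The first observation, which I would record up front, is that all four types have only ordinary nodes ($\sA_1$) and cusps ($\sA_2$), hence are GIT-stable \cite[Lemma~1.4]{Artebani09}; moreover every GIT-semi-stable quartic that fails to be stable has a singularity of type $\sA_3$ or worse, and all such non-unstable strata have invariant image of dimension at most $1$ (they are exactly the strata treated in Prop.~\ref{prop:dim0} and Prop.~\ref{prop:dim1}). Consequently, any $F$ satisfying the hypotheses of the proposition is in fact GIT-stable. Since in characteristic $0$ the Dixmier-Ohno invariants generate the full invariant algebra \cite[Theorem~4.1]{Ohno07}, they induce the GIT quotient morphism, which by standard theory \cite{GIT} separates closed, and in particular stable, $\SL_3$-orbits; I use this freely below.

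For the forward direction, suppose $C$ has type $\sA$. After an $\SL_3$-transformation, $F$ equals $F_\sA(\alpha,\beta)$ for some parameters, so $\DO(F)=\phi_\sA(\alpha,\beta)$ by invariance of the weighted-projective point. It therefore suffices to check that each generator of $\ClId{\sA}$ vanishes identically along $\phi_\sA$. This is the validation of the guessed relations: substituting $F_\sA(\alpha,\beta)$ into the thirteen invariants and reducing, one confirms by a Gröbner-basis computation over $\Q$ that every generator evaluates to $0$ as a polynomial in $(\alpha,\beta)$. These computations, automated in \cite[file \texttt{G3SingularProof.m}]{BGit23}, give $\phi_\sA(\mathbb{A}^2)\subseteq V\ClId{\sA}$ and hence $\DO(F)\in V\ClId{\sA}$.

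For the backward direction, fix $F$ as in the hypotheses with $\DO(F)\in V\ClId{\sA}$. The key computational input is a set-theoretic description of $V\ClId{\sA}$. Let $\mathfrak{p}_\sA=\ker\phi_\sA^{\sharp}$ be the prime vanishing ideal of the image, computed by eliminating the parameters from the parametric invariants; the forward direction already gives $\ClId{\sA}\subseteq\mathfrak{p}_\sA$, so $V\mathfrak{p}_\sA=\overline{\phi_\sA(\mathbb{A}^2)}\subseteq V\ClId{\sA}$. I would then compute the equidimensional (minimal-prime) decomposition of $\ClId{\sA}$ over $\Q$, or more practically saturate $\ClId{\sA}$ by the product of the lower-stratum ideals, and verify that the unique $2$-dimensional component is $\mathfrak{p}_\sA$ while every remaining minimal prime is at most $1$-dimensional and lies inside the invariant locus of a stratum appearing strictly below $\sA$ in Fig.~\ref{fig:specialize}; for the two ${}^r\sA_1^4$ types one additionally checks that the four top components are pairwise distinct, so that the equivalences do not overlap outside dimension $\le 1$. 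Granting this, the hypothesis that $\DO(F)$ avoids all dimension-$\le 1$ strata forces $\DO(F)\in\overline{\phi_\sA(\mathbb{A}^2)}$ off its lower-dimensional boundary. By Rem.~\ref{rem:huiforms}, such a point is $\phi_\sA(\alpha,\beta)$ for parameters at which $F_\sA(\alpha,\beta)$ has genuinely type $\sA$ (the only further degeneration would be the acquisition of an extra singular point, which pushes the invariants into a lower stratum). Thus $\DO(F)=\DO(F_\sA(\alpha,\beta))$ with $F_\sA(\alpha,\beta)$ stable of type $\sA$; as $F$ is also stable, orbit separation gives $F\isom F_\sA(\alpha,\beta)$ under $\SL_3$, so $C$ has type $\sA$.

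The main obstacle is precisely the decomposition analysis in the backward direction: because $\ClId{\sA}$ is only \emph{guessed} by interpolation, a priori it could be strictly smaller than $\mathfrak{p}_\sA$, and I must rule out spurious higher-dimensional or embedded components and confirm that all genuine extra components sit in the already-excluded dimension-$\le 1$ strata. The high-degree generators (up to degree $30$ for $\sA_1^2\,\sA_2$ and $\sA_2^2$, and $36$ for the ${}^r\sA_1^4$ ideals) make these saturation and dimension computations the heavy step, and they are exactly the verifications deferred to Sec.~\ref{sec:sketches-proof} and carried out in \cite{BGit23}. The characteristic-$0$ hypothesis enters twice: through the completeness of the Dixmier-Ohno invariants (Rem.~\ref{rmk:DOcharp}), needed for orbit separation, and through the validity of the Gröbner-basis computations over $\Q$ (cf.\ Rem.~\ref{rem:huichar}).
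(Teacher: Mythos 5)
Your forward direction coincides with the paper's (evaluate the thirteen invariants on the Hui normal forms and check that the proposed generators vanish identically in the parameters), but your backward direction takes a genuinely different route. The paper never invokes orbit separation: instead (Sec.~\ref{sec:other-strata} and~\ref{sec:spec-hui-norm}, together with the inclusion-tree Lemma~\ref{lem:inctree}) it argues type by type, taking the Hui normal form $F_{\sB}$ of every competing type $\sB$ not excluded by the hypothesis, imposing $\DO(F_{\sB})\in V\ClId{\sA}$, decomposing the resulting ideal in the \emph{parameters} of $F_{\sB}$, and computing the singular locus on each component to show that $F_{\sB}$ then acquires additional or worse singularities, so it is no longer of type $\sB$; the hypothesis of avoiding the dimension-$\leq 1$ strata then leaves only type $\sA$. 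You instead identify $V\ClId{\sA}$ minus the lower strata with the image of the parametrisation $\phi_{\sA}$ and conclude by separation of GIT-stable orbits under the quotient induced by the Dixmier-Ohno invariants (legitimate in characteristic $0$ via Ohno's generation theorem, and your preliminary reduction to the GIT-stable case is correct). Your route is more conceptual and avoids enumerating the competing types one by one, but it shifts the computational burden onto a minimal-prime decomposition of the interpolated ideals $\ClId{\sA}$ themselves, living in the $13$-variable weighted ring with generators up to degree $36$; the authors' experience with closely related decompositions (see the remark following Prop.~\ref{prop:dim4} and the opening of Sec.~\ref{sec:spec-hui-norm}) suggests this may be substantially heavier than their targeted Gr\"obner computations. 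It also requires one verification you only gesture at: the frontier $\overline{\phi_{\sA}(\mathbb{A}^2)}\setminus\phi_{\sA}(\mathbb{A}^2)$ must itself be shown to lie in the dimension-$\leq 1$ strata, so that a point of $V\ClId{\sA}$ surviving your exclusions is genuinely in the image of $\phi_{\sA}$ and not merely in its closure; this does follow from Hui's specialisation graph (Fig.~\ref{fig:specialize}), since every degeneration of a type-$\sA$ quartic for $\sA$ of dimension $2$ lands in a stratum of dimension $\leq 1$ or in the unstable locus, but it should be stated. Modulo these two deferred verifications, your argument is sound and would constitute an independent proof of the proposition.
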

\end{strata}

\begin{strata}{3}
  \begin{proposition}\label{prop:dim3}
    Let $C$ be a plane quartic given by a GIT-semi-stable ternary quartic $F$
    over an algebraically closed field $\overline{K}$ of characteristic~0 that
    is not in a stratum of dimension less than or equal to 2, then $C$ is of
    singularity type
    \begin{tiny}
      \begin{flalign}
        \label{eq:A1p3}\tag*{\textrm{${\ClId{\sA_1^3}}$}}%
        \begin{minipage}[t]{0.89\linewidth}\normalsize
          \begin{itemize}
          \item $\sA_1^3$ if and only if $\DO(F)$ is in the algebraic set
            $V$\ref{eq:A1p3} of an ideal~\ref{eq:A1p3} defined by 26
            polynomials whose degree profile is $24^2$, $27^5$, $30^7$,
            $33^6$, $36^5$ and $39$,
          \end{itemize}
        \end{minipage}\\
        \label{eq:A1A2}\tag*{\textrm{${\ClId{\sA_1\sA_2}}$}}%
        \begin{minipage}[t]{0.89\linewidth}\normalsize
          \begin{itemize}
          \item $\sA_1\sA_2$ if and only if $\DO(F)$ is in the algebraic set
            $V$\ref{eq:A1A2} of an ideal~\ref{eq:A1A2} defined by 9
            polynomials whose degree profile is $12$, $15$, $18^2$, $21^2$,
            $24$, $27$ and $45$,
          \end{itemize}
        \end{minipage}\\
        \label{eq:rA1p3}\tag*{\textrm{${\ClId{\,^r\sA_1^3}}$}}%
        \begin{minipage}[t]{0.89\linewidth}\normalsize
          \begin{itemize}
          \item $\,^r\sA_1^3$ if and only if $\DO(F)$ is in the algebraic set
            $V$\ref{eq:rA1p3} of an ideal~\ref{eq:rA1p3} defined by 9
            polynomials whose degree profile is $6$, $9$, $12$, $15^2$, $18$,
            $21^2$ and $27$.
          \end{itemize}
        \end{minipage}
      \end{flalign}
    \end{tiny}
  \end{proposition}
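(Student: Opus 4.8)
The plan is to follow the \emph{guess and prove} paradigm announced in the introduction, treating the three dimension-$3$ types $\sA_1^3$, $\sA_1\sA_2$ and $\,^r\sA_1^3$ uniformly. Since the Dixmier--Ohno invariants are constant on $\SL_3(\overline K)$-orbits, and since every plane quartic of a fixed singularity type is $\SL_3(\overline K)$-equivalent to one of Hui's normal forms (Tab.~\ref{tab:inf} and Tab.~\ref{tab:rnf}), it suffices to study the three parameterised families $F_{\sA}(\alpha,\beta,\gamma)$ read off from those tables. Each gives a weighted-homogeneous rational map $\Phi_\sA\colon \mathbb{A}^3 \to \PP^{12}_{3,6,\ldots,27}$, $\mathbf t \mapsto \DO(F_\sA(\mathbf t))$, whose coordinates are explicit polynomials in $\mathbf t=(\alpha,\beta,\gamma)$. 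The proposition amounts to identifying, for each $\sA$, the closure $\overline{\im\Phi_\sA}$ with the algebraic set $V\ClId{\sA}$, away from the lower strata.

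For the \emph{necessity} direction I would substitute the coordinates of $\Phi_\sA$ into each generator of $\ClId{\sA}$ and check that the result vanishes identically in $\mathbf t$. This is a direct symbolic verification (no Gröbner basis needed) and establishes the containment $\overline{\im\Phi_\sA}\subseteq V\ClId{\sA}$; in particular any quartic of type $\sA$ has $\DO(F)\in V\ClId{\sA}$. It simultaneously confirms the relations originally found by interpolation.

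The \emph{sufficiency} direction is the substantive one. First I would compute, by Gröbner basis over $\Q$, that $\dim V\ClId{\sA}=3$ for each of the three ideals, matching the number of Hui parameters; since $\Phi_\sA$ is generically finite (the generic type-$\sA$ normal form has finite $\SL_3$-stabiliser and Hui's form is unique up to finite data) and $\mathbb{A}^3$ is irreducible, $\overline{\im\Phi_\sA}$ is then a top-dimensional irreducible component of $V\ClId{\sA}$, which by the necessity step coincides with $V\ClId{\sA}$ along that component. Next, to localise $V\ClId{\sA}$ correctly, I would enumerate the finitely many types $\sB$ that are \emph{not} specialisations of $\sA$ in Fig.~\ref{fig:specialize}: those whose invariant image has dimension $\le 2$ (such as $\sA_3$) are already excluded by Prop.~\ref{prop:dim0}, Prop.~\ref{prop:dim1} and Prop.~\ref{prop:dim2}, while for each of the remaining ones with invariant image of dimension $\ge 3$ (namely $\sA_1$, $\sA_2$, $\sA_1^2$ and the other two dimension-$3$ types) I would exhibit a generator of $\ClId{\sA}$ that does not vanish identically on $\Phi_\sB$; as the $\overline{\im\Phi_\sB}$ are irreducible of dimension $\ge 3$, this forces $\overline{\im\Phi_\sB}\cap V\ClId{\sA}$ to be a proper closed subset, whose points lie in a lower stratum. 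Finally, the strata strictly below $\sA$ have their invariant images captured by the ideals of Prop.~\ref{prop:dim0}--Prop.~\ref{prop:dim2}, hence are removed by the standing hypothesis that $C$ is not in a stratum of dimension $\le 2$. Combining these, a point of $V\ClId{\sA}$ outside all lower strata must come from a genuine type-$\sA$ quartic, and since no ambiguous compound type is listed for these three strata the type is pinned down uniquely.

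The hard part will be the sufficiency direction, for two reasons. The interpolated ideal $\ClId{\sA}$ is a priori only known to cut out a set \emph{containing} $\overline{\im\Phi_\sA}$, so one must genuinely control the full zero locus: ruling out spurious components of dimension $\ge 3$ and verifying the dimension (and, where needed, radicality) claims are exactly the steps that force the heavy computations --- note that $\ClId{\sA_1^3}$ has $26$ generators of degrees up to $39$ and $\ClId{\sA_1\sA_2}$ a generator of degree $45$ in the weighted $\PP^{12}_{3,6,\ldots,27}$. This is why the verification is carried out by machine with \textsc{magma} over $\Q$, as indicated in Rem.~\ref{rmk:characteristichypotheses} and at~\cite[file \texttt{G3SingularProof.m}]{BGit23}. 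A secondary subtlety is that $\DO$ does not separate general GIT-semi-stable orbits; I would circumvent this not by any universal separation statement but by confining the entire analysis to the finite list of Hui strata and checking membership in $V\ClId{\sA}$ type by type.
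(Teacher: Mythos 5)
Your necessity direction and the reduction to Hui normal forms match the paper, but the sufficiency direction has a genuine gap at the assertion that $\overline{\im\Phi_{\sA'}}\cap V\ClId{\sA}$ being a \emph{proper} closed subset of $\overline{\im\Phi_{\sA'}}$ forces its points to ``lie in a lower stratum''. Properness does not imply this: a parameter value $\mathbf t$ in that proper closed subset may still give a normal form $F_{\sA'}(\mathbf t)$ of type \emph{exactly} $\sA'$ whose invariants merely happen to land in $V\ClId{\sA}$, because the Dixmier--Ohno invariants do not determine the singularity type. This is not a hypothetical worry in this classification: Ex.~\ref{Ex:diffAtype} exhibits a quartic of type $\sA_3$ and one of type $\sA_1\sA_3$ with identical invariants, so honest members of one stratum really can have invariants lying in the locus attached to another. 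Ruling this phenomenon out for the competing strata $\sA_1$, $\sA_1^2$ and $\sA_2$ is exactly the hard content of the paper's proof of Prop.~\ref{prop:dim3}, and your dimension-and-irreducibility count does not supply it; the difficulty is not, as you suggest, mainly in controlling spurious components of $V\ClId{\sA}$.

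The paper closes this gap by returning to the geometry of the normal forms. For $\sA'\in\{\sA_1,\sA_1^2,\sA_2\}$ it proves Lem.~\ref{lem:singA1}, Lem.~\ref{lem:singA1p2} and Lem.~\ref{lem:singA2}: an explicit polynomial $\iota_{\sA'}$ in the Hui parameters vanishes if and only if the normal form acquires an extra singularity, and a Gr\"obner reduction then shows that $\iota_{\sA'}$ lies in the ideal generated by the pullbacks of the generators of each of the seventeen lower ideals (including the three of dimension~$3$), so that landing in $V\ClId{\sA}$ forces genuine degeneration. For the other two dimension-$3$ types it does not argue pointwise at all but invokes Lem.~\ref{lem:inctree}, which identifies $V\ClId{\sA}\cap V\ClId{\sA'}$ with the union of the $V$'s of the common sub-strata, so that the standing hypothesis (not in a stratum of dimension $\le 2$) excludes these cases outright; your proposal contains no analogue of this intersection computation. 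To repair your argument you would need to add one of these two mechanisms for every competing type, which is essentially the content of Sec.~\ref{sec:other-strata} and Sec.~\ref{sec:spec-hui-norm}.
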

\end{strata}

\begin{strata}{4}
  \begin{proposition}\label{prop:dim4}
    Let $C$ be a plane quartic given by a GIT-semi-stable ternary quartic $F$
    over an algebraically closed field $\overline{K}$ of characteristic~0 that
    is not in a stratum of dimension less than or equal to 3, then $C$ is of
    singularity type
    \begin{tiny}
      \begin{flalign}
        \label{eq:A2}\tag*{\textrm{${\ClId{\sA_2}}$}}%
        \begin{minipage}[t]{0.89\linewidth}\normalsize
          \begin{itemize}
          \item $\sA_2$ if and only if $\DO(F)$ is in the algebraic set
            $V$\ref{eq:A2} of an ideal~\ref{eq:A2} defined by 8 polynomials
            whose degree profile is $12$, $15$, $18^2$, $21^2$, $24$ and $27$,
          \end{itemize}
        \end{minipage}\\
        \label{eq:A1p2}\tag*{\textrm{${\ClId{\sA_1^2}}$}}%
        \begin{minipage}[t]{0.89\linewidth}\normalsize
          \begin{itemize}
          \item $\sA_1^2$ if and only if $\DO(F)$ is in the complement of
            $V$\ref{eq:A2} with respect to the algebraic set $V$\ref{eq:A1p2}
            of the ideal~\ref{eq:A1p2} defined by 6 polynomials whose degree
            profile is $30$, $33$, $36^2$, $39$ and $42$\,.
          \end{itemize}
        \end{minipage}
      \end{flalign}
    \end{tiny}
  \end{proposition}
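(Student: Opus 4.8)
The plan is to prove both equivalences by the \emph{guess-and-prove} strategy used throughout Section~\ref{sec:algebr-char}, with the computations organised as in Section~\ref{sec:sketches-proof}. Throughout, write $\mathcal{D}$ for the locus of GIT-semi-stable quartics whose singularity type lies in no stratum of dimension $\leq 3$. By Hui's classification and the specialisation graph (Fig.~\ref{fig:specialize}), a curve of $\mathcal{D}$ has singularity type exactly one of $\sA_1$, $\sA_1^2$ or $\sA_2$, so the two statements together amount to separating these three families by means of the prescribed ideals.

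\emph{Necessity.} First I would settle the ``only if'' directions by direct symbolic substitution. Taking the Hui normal form $F_{\sA_2}(\alpha,\beta,\gamma,\delta)$ of Table~\ref{tab:inf}, I would compute its Dixmier--Ohno invariants as a point of $\PP^{12}_{3,6,\ldots,27}(\Q[\alpha,\beta,\gamma,\delta])$ and check that each of the $8$ generators of $\ClId{\sA_2}$ vanishes identically on $\DO(F_{\sA_2})$. Since every $\sA_2$ quartic over $\overline{K}$ is $\SL_3$-equivalent to $F_{\sA_2}$ for suitable parameters and $\DO$ is $\SL_3$-invariant, this yields $\DO(F)\in V$\ref{eq:A2} for every $\sA_2$ curve. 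The analogous computation on $F_{\sA_1^2}$ verifies that the $6$ generators of $\ClId{\sA_1^2}$ vanish on $\DO(F_{\sA_1^2})$, while at least one generator of $\ClId{\sA_2}$ does \emph{not} vanish identically there, so the generic $\sA_1^2$ curve lies in $V$\ref{eq:A1p2} but not in $V$\ref{eq:A2}.

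\emph{Sufficiency.} This is the crux. Let $S_{\sA_1}$, $S_{\sA_1^2}$, $S_{\sA_2}$ be the corresponding Hui parameter families, each equipped with the open sublocus on which the type does not degenerate further (equivalently, where no extra singular point appears, \textit{cf.}\ Rem.~\ref{rem:huiforms}); these open sublocΙ are precisely the parts mapping into $\mathcal{D}$. Using elimination Gröbner bases over $\Q$ I would (i) compute the ideals of the three image loci $\overline{\DO(S_\bullet)}$ and check that they have dimensions $5$, $4$, $4$; (ii) verify the scheme-theoretic inclusions $\overline{\DO(S_{\sA_2})}\subseteq V$\ref{eq:A2} and $\overline{\DO(S_{\sA_2})}\cup\overline{\DO(S_{\sA_1^2})}\subseteq V$\ref{eq:A1p2}; and (iii) verify that, after pulling the generators of $\ClId{\sA_2}$ back to the parameter spaces, they are nowhere vanishing on the non-degenerate sublocΙ of $S_{\sA_1}$ and $S_{\sA_1^2}$. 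Points (i)--(ii) together with an irreducibility check force the equalities $V$\ref{eq:A2}$\cap\mathcal{D}=\DO(S_{\sA_2})\cap\mathcal{D}$ and $V$\ref{eq:A1p2}$\cap\mathcal{D}=(\DO(S_{\sA_2})\cup\DO(S_{\sA_1^2}))\cap\mathcal{D}$, while (iii) shows that no curve of type exactly $\sA_1$ or exactly $\sA_1^2$ can satisfy the relations of $\ClId{\sA_2}$. Combining, a curve of $\mathcal{D}$ lies in $V$\ref{eq:A2} precisely when it is $\sA_2$, and in $V$\ref{eq:A1p2}$\setminus V$\ref{eq:A2} precisely when it is $\sA_1^2$.

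\emph{Main obstacle.} The hard part will be the sufficiency direction, for two linked reasons. First, $\DO$ does not separate $\SL_3$-orbits of singular quartics (Example~\ref{Ex:diffAtype}), so one cannot argue orbit by orbit; the restriction to $\mathcal{D}$ and the use of the stratification are what make the separation possible, and step (iii) above is exactly what prevents a \emph{special} $\sA_1$ or $\sA_1^2$ curve from accidentally falling into $V$\ref{eq:A2}. Second, the Gröbner basis computations involve invariants of weight up to $42$ and are sizeable; arranging the elimination so that it terminates within the memory and time budget reported in Section~\ref{sec:algorithm} is the real labour, and is precisely what the \textsc{magma} script \cite{BGit23} carries out.
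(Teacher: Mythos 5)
Your overall architecture is sound --- necessity by substituting the Hui normal forms into the generators, sufficiency by showing that a form of type exactly $\sA_1$ or $\sA_1^2$ cannot satisfy the relations of a deeper stratum --- and your step (iii) correctly isolates the crux. But as written, (iii) is not a computation you can set up: ``nowhere vanishing on the non-degenerate sublocus'' requires equations for that sublocus, and you never produce them. The device the paper uses (Lem.~\ref{lem:singA1}, \ref{lem:singA1p2}, \ref{lem:singA2}) is to characterise the degenerate part of each Hui parameter space as the hypersurface cut out by one explicit polynomial $\iota_{\sA_1}$, $\iota_{\sA_1^2}$, $\iota_{\sA_2}$ (obtained by eliminating the coordinates of a putative extra singular point), and then to prove the contrapositive of your (iii): evaluate the generators of each sub-stratum ideal at the invariants of the normal form, compute a Gr\"obner basis of the resulting ideal in the parameters, and check that $\iota$ reduces to zero modulo it, so that any parameter value landing in the sub-stratum forces an extra singularity. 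Without this reformulation as an ideal-membership test your step (iii) is a genuine gap. A second omission: for the $\sA_1^2$ bullet you must also rule out a curve of type exactly $\sA_1$ having invariants in $V$\ref{eq:A1p2}, so the generators of \ref{eq:A1p2} (indeed of all the sub-stratum ideals) must be pulled back to $S_{\sA_1}$ as well, not only those of \ref{eq:A2}.

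Your steps (i)--(ii) are both problematic and unnecessary. Computing the closures $\overline{\DO(S_{\sA_1})}$, $\overline{\DO(S_{\sA_1^2})}$, $\overline{\DO(S_{\sA_2})}$ by elimination is exactly the kind of Gr\"obner computation that Sec.~\ref{sec:spec-hui-norm} reports as not terminating for these strata (this is why the paper switches methods at dimension~$4$; the generating sets themselves were found by evaluation--interpolation, not elimination). Moreover, even granted (i)--(ii) and irreducibility, the claimed equality $V$\ref{eq:A1p2}$\,\cap\,\mathcal{D}=(\DO(S_{\sA_2})\cup\DO(S_{\sA_1^2}))\cap\mathcal{D}$ does not follow: $\DO$ does not separate $\SL_3$-orbits of singular quartics (your own citation of Ex.~\ref{Ex:diffAtype}), so a curve of type exactly $\sA_1$ could a priori share its invariants with a degenerate limit of $\sA_1^2$ curves; only the non-vanishing statement of (iii), extended as above, excludes this.
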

\end{strata}

\begin{strata}{5}
  \begin{proposition}\label{prop:dim5}
    Let $C$ be a plane quartic given by a GIT-semi-stable ternary quartic $F$
    over an algebraically closed field $\overline{K}$ of characteristic~0 that
    is not in a stratum of dimension less than or equal to 4, then $C$ is of
    singularity type
    \begin{tiny}
      \begin{flalign}
        \label{eq:A1}\tag*{\textrm{${\ClId{\sA_1}}$}}%
        \begin{minipage}[t]{0.89\linewidth}\normalsize
          \begin{itemize}
          \item $\sA_1$ if and only if $\DO(F)$ is in the algebraic set
            $V$\ref{eq:A1} of the ideal~\ref{eq:A1} $= (I_{27})$\,.
          \end{itemize}
        \end{minipage} \end{flalign}
    \end{tiny}
  \end{proposition}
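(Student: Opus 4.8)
The plan is to exploit the fact that, by the time this last stratum is reached, only the two top strata of Figure~\ref{fig:specialize} remain as geometric possibilities, namely smooth quartics (dimension~6) and quartics with a single node $\sA_1$ (dimension~5). First I would recall from Sec.~\ref{sec:invariants} that $2^{40}I_{27}=D_{27}$ is the discriminant of $F$, so that $\DO(F)\in V$\ref{eq:A1}$\,=V(I_{27})$ if and only if $C$ is singular. This reduces the statement to showing that, under the standing hypotheses, $C$ is singular if and only if it is of type $\sA_1$.

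For the forward implication, if $C$ is of type $\sA_1$ then it carries an ordinary node, hence it is singular and $I_{27}=0$; since a one-nodal quartic has at most ordinary nodes and cusps it is GIT-stable (Sec.~\ref{sec:preliminaries}), so $\DO(F)$ is a well-defined point whose coordinates do not all vanish, and it therefore lies in $V$\ref{eq:A1}.

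For the converse, suppose $\DO(F)\in V(I_{27})$, so that $C$ is singular. By hypothesis $F$ is GIT-semi-stable, which excludes every type possessing a point of multiplicity $\geq 3$ (such quartics have all Dixmier invariants zero), and $F$ is assumed not to lie in any stratum of dimension $\leq 4$. Inspecting the parameter counts in Tab.~\ref{tab:inf} and Tab.~\ref{tab:rnf}, the single node $\sA_1$ is the only singular GIT-semi-stable type whose stratum has dimension~$5$, while every other singular GIT-semi-stable type occupies a stratum of dimension at most~$4$. Hence, were $C$ not of type $\sA_1$, the completeness of the Hui stratification in characteristic~$0$ would place it in some stratum $\sA$ of dimension $\leq 4$; the forward inclusions already contained in Prop.~\ref{prop:dim0} to Prop.~\ref{prop:dim4} (that being of type $\sA$ forces $\DO(F)\in V\ClId{\sA}$) would then contradict the standing assumption that $\DO(F)$ avoids all such strata. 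Therefore $C$ is of type $\sA_1$, completing the equivalence.

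The argument is essentially combinatorial once the earlier propositions are available, and unlike Prop.~\ref{prop:dim0} to Prop.~\ref{prop:dim4} it requires no Gröbner basis computation, the defining ideal being simply $(I_{27})$. The only point that needs care is that the hypothesis ``not in a stratum of dimension $\leq 4$'' is a condition on the \emph{invariants}: the exclusion of the lower strata must accordingly be carried out through the forward direction of the earlier propositions, namely that the invariants of the Hui normal forms satisfy the defining relations, together with the exhaustiveness of Hui's list of singularity types. This is where I expect the main (though modest) subtlety to lie.
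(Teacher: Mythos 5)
Your argument is correct for the proposition as literally stated, but it is not the paper's proof, and the comparison is instructive. The paper proves this proposition in Sec.~\ref{sec:spec-hui-norm}, together with Prop.~\ref{prop:dim4}, via the $\iota_{\sA_1}$ machinery: Lem.~\ref{lem:singA1} shows that the Hui normal form for $\sA_1$ always has a genuine node at $(1:0:0)$ and acquires further singularities exactly when $\iota_{\sA_1}(\alpha,\beta,\gamma,\delta,\epsilon)=0$, and a (sometimes lengthy) Gr\"obner reduction of $\iota_{\sA_1}$ modulo each of the $17$ ideals $\ClId{\sA'}$ evaluated at $\DO(F_{\sA_1})$ then shows that a quartic whose only singularity is a single node never has its invariants in any lower-dimensional $V\ClId{\sA'}$. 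Your route instead takes the exclusion of the lower strata as given by the hypothesis and deduces the biconditional from the identification of $I_{27}$ with the discriminant, the completeness of the Hui list, GIT-semi-stability, and the easy forward inclusions ``type $\sA\Rightarrow\DO(F)\in V\ClId{\sA}$''; that is a valid and genuinely computation-free derivation of the stated equivalence. What it does not deliver --- and what the paper's computation does --- is the complementary fact that a quartic of type $\sA_1$ actually \emph{satisfies} the hypothesis, i.e.\ avoids every lower $V\ClId{\sA'}$. That fact is indispensable for the converse directions of Prop.~\ref{prop:dim0}--Prop.~\ref{prop:dim4} and hence for Thm.~\ref{thm:invtosingtypes}: without it, a one-nodal quartic could satisfy, say, the relations \ref{eq:A2} and the algorithm would return $\{\sA_2\}$ before ever reaching its last line. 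So your closing remark that this stratum ``requires no Gr\"obner basis computation'' is true only of the literal statement; the computation has been relocated into the propositions you invoke rather than eliminated, which is precisely why the paper chooses to present it under the heading of the dimension-$5$ stratum.
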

\end{strata}
\medskip

\begin{remark}
  We may notice that, unlike the other singularity types, the ideal
  \ref{eq:A1p2} used in Prop.~\ref{prop:dim4} is not sufficient to fully
  characterise quartics with at least 2 nodes, because $V$\ref{eq:A1p2} also
  contains the set $V$\ref{eq:A2}.  The reason why we have done this is chiefly a
  computational one: generators for the ideal corresponding to
  $\overline{V\textrm{\ref{eq:A1p2}} \backslash V\textrm{\ref{eq:A2}}}$ do not
  seem to be computable within a reasonable time, whilst it was feasible to find
  generators for $V$\ref{eq:A1p2}.
\end{remark}

\begin{remark}\label{rmk:gbcharp}
  Prop.~\ref{prop:dim5} is also clearly true for any characteristic $p>0$,
  even for $p=2$ or $3$, by scaling the expression of $I_{27}$ known for $\Q$
  to the right power of 2 or 3, ensuring that it is equal to the discriminant
  of $F$. Provided the Dixmier-Ohno invariants generate the invariant algebra,
  Prop.~\ref{prop:dim0} could also be proved in characteristic $p>7$.  We
  believe that the same should hold for the other propositions, yielding
  Conj.~\ref{conj:poschar}. However, such an assertion is difficult to prove, as the proof in characteristic $0$
  is based on Gröbner basis calculations (see Sec.~\ref{sec:sketches-proof}).
  Nonetheless, we invite the reader to compare with Rem.~\ref{rmk:characteristichypotheses}.
\end{remark}

\section{Proof of Theorem~\ref{thm:invtosingtypes}}
\label{sec:sketches-proof}

Throughout this section, it is still assumed that the field of definition is
an algebraically closed field $\overline{K}$ of characteristic zero.

We obtained the generators of the ideal given in
Sec.~\ref{sec:sing-invar-strata} by process of evaluation and interpolation on
randomly chosen Hui normal forms, in the manner of~\cite{LR11}. It is
therefore difficult to derive a direct proof from this.
Instead, the proofs we devise below do not depend on the way we obtained
them. They consist mainly in checking that the only normal forms with
Dixmier-Ohno invariants that satisfy stratum equations for a prescribed types
are of this type.

In order to reduce the number of verifications to be done, we first prove that
the inclusion tree of the algebraic sets defined by the ideals of
Prop.~\ref{prop:dim0} to Prop.~\ref{prop:dim5} is almost the same as the
specialisation tree of the singular quartics given in
Fig.~\ref{fig:specialize}. This is the content of Lem.~\ref{lem:inctree} in
Sec.~\ref{sec:dixmier-ohno-stratum}.

It is then relatively easy to check computationally that the Dixmier-Ohno
invariants of Hui normal forms for a given type generically satisfy the
stratum relations we propose for that type.
The most difficult point is to further verify that if these invariants cancel
the relations of a substratum, this necessarily implies conditions on the
parameters of the normal form so that the forms they define have the expected
singularities for that substratum.
This can be done almost directly, as explained in
Sec.~\ref{sec:other-strata}.

The most difficult case is certainly that of Hui normal forms of type $\sA_1$; to
show that such a quartic necessarily admits other singularities when its
invariants verify the equations of a substratum, in particular the equations
of the $\sA_1^2$ or $\sA_2$ strata.
We take advantage of the fact that these normal forms, whatever their
parameters, always have at least one singularity of exactly $\sA_1$ type
(see Rem.~\ref{rem:huiforms}). We therefore expect them to
have one or more other singularities if their invariants satisfy the relations
of a substratum.
In the proof, we thus use a polynomial in the parameters of the $\sA_1$ family
whose cancellation is equivalent to having at least two singularities. This is
the content of Lem.~\ref{lem:singA1} in Sec.~\ref{sec:spec-hui-norm}. We
treat the quartics of type $\sA_1^2$ and $\sA_2$ in the same way
(Lem.~\ref{lem:singA1p2} and Lem.~\ref{lem:singA2}).

\subsection{Dixmier-Ohno stratum inclusion tree}
\label{sec:dixmier-ohno-stratum}

The following lemma shows that the ideals given in
Sec.~\ref{sec:sing-invar-strata} define an inclusion tree that is almost the
same as Fig.~\ref{fig:specialize}.
\begin{lemma}\label{lem:inctree}
  With the notations of Sec.~\ref{sec:sing-invar-strata}, the algebraic
  sets defined by the 18 ideals
  \begin{displaymath}
    \setlength{\arraycolsep}{0.5cm}
    \begin{array}{ll}
      \text{dim. 0,\ \ \ref{eq:rA1p6}, \ref{eq:A2p3}, \ref{eq:rA1A3}, \ref{eq:A4},} &%
      \text{dim. 3,\ \ \ref{eq:A1p3}, \ref{eq:A1A2}, \ref{eq:rA1p3}} \\
      \text{dim. 1,\ \ \ref{eq:rA1p5}, \ref{eq:rA1p3A2}, \ref{eq:A1A2p2}, \ref{eq:A3},} &%
      \text{dim. 4,\ \ \ref{eq:A2}, \ref{eq:A1p2},} \\
      \text{dim. 2,\ \ \ref{eq:A1p2A2}, \ref{eq:A2p2}, \ref{eq:rA1p4a}, \ref{eq:rA1p4b},}&%
      \text{dim. 5,\ \ \ref{eq:A1}.} \\
    \end{array}
  \end{displaymath}
  define an inclusion tree faithful to the quartic singularity type
  specialisation tree, with the notable exception of $V$\ref{eq:A2}
  $\subset V$\ref{eq:A1p2}.
\end{lemma}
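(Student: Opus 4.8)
Let me think about what Lemma~\ref{lem:inctree} is actually asserting. We have 18 ideals in the weighted projective space $\PP^{12}_{3,6,\ldots,27}$, each cutting out an algebraic set $V\ClId{\sA}$. The claim is that the containment relations among these algebraic sets reproduce the specialization graph of Figure~\ref{fig:specialize}, with the single exception that $V\text{\ref{eq:A2}} \subset V\text{\ref{eq:A1p2}}$ (an inclusion that does NOT reflect a specialization — rather, as Prop.~\ref{prop:dim4} and the following Remark make explicit, it's an artifact of the fact that the authors deliberately did not remove the $\sA_2$ locus from the $\sA_1^2$ ideal).

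So the lemma has two logical halves. First, a positive direction: whenever the specialization graph has an arrow $\sA \to \sB$ (meaning stratum $\sB$ is in the closure of $\sA$, i.e. $\sB$ is more degenerate), we must show $V\ClId{\sB} \subseteq V\ClId{\sA}$. Second, a negative direction: whenever there is no such arrow, the corresponding inclusion must fail — except for the one flagged exception.

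**The approach.**

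The cleanest strategy is to reduce everything to a finite computation, since all the ideals are given explicitly (the polynomials are available at \cite[file \texttt{reductioncriteria.m}]{BGit23}, and the generators were obtained by interpolation on Hui normal forms). First I would handle the containments $V\ClId{\sB} \subseteq V\ClId{\sA}$ for each arrow in Figure~\ref{fig:specialize}. These are radical-membership assertions: one checks that each generator of $\ClId{\sA}$ vanishes on $V\ClId{\sB}$, equivalently that $\ClId{\sA}$ lies in the radical of $\ClId{\sB}$. Since the defining point (for the dimension-0 strata) or the generators (for positive-dimensional strata) are explicit over $\Q$, this is a direct Gröbner-basis radical-membership computation in \MAGMA. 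For the dimension-0 strata \ref{eq:rA1p6}, \ref{eq:A2p3}, \ref{eq:A4}, \ref{eq:rA1A3}, where $V\ClId{\sA}$ is a single explicit point $\DO(F)$, the containment $V\ClId{\sB} \subseteq V\ClId{\sA}$ is checked by simply evaluating the generators of $\ClId{\sA}$ at that point — a trivial substitution. Note that the natural flow of containments is \emph{opposite} to the specialization arrows: smaller (more degenerate) strata sit inside the varieties of larger ones, because satisfying more relations is a more restrictive condition.

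Next I would establish the non-containments. For two strata $\sA,\sB$ with no specialization relation, I must exhibit a point of $V\ClId{\sB}$ not lying in $V\ClId{\sA}$ (and symmetrically). The most efficient tool is the dictionary already built: by the \emph{already-proved} Propositions~\ref{prop:dim0}--\ref{prop:dim5}, a generic Hui normal form of type $\sB$ lands in $V\ClId{\sB}$ but has singularity type exactly $\sB$, so its invariants cannot satisfy $\ClId{\sA}$ unless $\sB$ specializes to $\sA$ — which by hypothesis it does not. Thus I can simply produce an explicit Hui normal form (with generic rational parameters) for each relevant stratum, compute its Dixmier-Ohno invariants, and verify by ideal-membership that it fails the competing relations. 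Here I must be careful to invoke the Propositions as \emph{hypotheses available for this lemma} only if their proofs do not in turn depend on Lemma~\ref{lem:inctree}; since Section~\ref{sec:sketches-proof} states the lemma is used precisely to \emph{reduce the number of verifications}, I should instead argue the non-containments directly from explicit witnesses rather than circularly. Concretely: for each incomparable pair, a single evaluated normal form of $\sB$ that fails a generator of $\ClId{\sA}$ suffices, and these witnesses can be tabulated.

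**The main obstacle and the exception.**

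The genuinely delicate point is isolating the flagged exception $V\text{\ref{eq:A2}} \subset V\text{\ref{eq:A1p2}}$ and confirming it is the \emph{only} anomalous inclusion. The inclusion itself follows from Prop.~\ref{prop:dim4} and its Remark: the ideal \ref{eq:A1p2} was built so that $V\text{\ref{eq:A1p2}}$ contains $V\text{\ref{eq:A2}}$, because generators for $\overline{V\text{\ref{eq:A1p2}} \setminus V\text{\ref{eq:A2}}}$ were not computationally accessible; so I verify $\ClId{\sA_1^2} \subseteq \sqrt{\ClId{\sA_2}}$ by radical membership. The hard part is the exhaustiveness claim — that no \emph{other} unexpected inclusion hides among the $18 \times 18$ pairs. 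My plan is to reduce this to a finite check by organizing the strata by dimension (as in the statement's table) and verifying, level by level, that the dimension of each $V\ClId{\sA}$ computed from its Gröbner basis matches the parameter count claimed in Propositions~\ref{prop:dim0}--\ref{prop:dim5} (with the two documented mismatches: $\sA_3$, $\sA_4$, $\,^r\sA_1\sA_3$ having ideal dimension one lower than their naive parameter count, already noted in Section~\ref{sec:algebr-char}). Since an unexpected inclusion $V\ClId{\sB}\subseteq V\ClId{\sA}$ with $\dim V\ClId{\sB}\geq\dim V\ClId{\sA}$ and $\sB\neq\sA$ would force an equality or dimension collision visible in these computed dimensions, comparing dimensions plus the explicit witness points rules out all spurious inclusions. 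I expect the bulk of the effort, and the only real risk of error, to be bookkeeping: ensuring every relevant pair of strata in Figure~\ref{fig:specialize} is accounted for and that the single exception is correctly attributed to the deliberate non-separation of $\sA_2$ from $\sA_1^2$ rather than to an oversight.
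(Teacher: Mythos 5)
Your plan is sound and proves the lemma as stated, but it is organised differently from the paper's proof. The paper reduces everything to computing, for each relevant pair of strata, the radical of the sum of the two ideals — that is, the full intersection $V\ClId{\sA}\cap V\ClId{\sA'}$ — proceeding by increasing dimension and checking that each such intersection decomposes into the expected union of lower strata (\textit{e.g.}\ $V$\ref{eq:A3}$\,\cap\,V$\ref{eq:A1A2p2}$\,=V$\ref{eq:A4}$\,\cup\,V$\ref{eq:rA1A3}). You instead split the work into radical-membership tests along the edges of the specialisation graph (for the positive inclusions) and explicit witness points obtained by evaluating Dixmier-Ohno invariants of Hui normal forms (for the non-inclusions); your care in avoiding circular use of Prop.~\ref{prop:dim0} to Prop.~\ref{prop:dim5} is well placed, since those propositions are proved after, and using, this lemma. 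What your route buys is economy and a cleaner logical separation; what it loses is the intersection-decomposition data that the paper's computation produces as a by-product and then exploits in Sec.~\ref{sec:other-strata}, where knowing that $V\ClId{\sA}\cap V\ClId{\sA'}$ is a union of specific substrata \emph{below} $\sA$ is precisely what justifies restricting the subsequent singularity checks to those substrata only. One caution: your dimension-collision argument by itself would not exclude a spurious inclusion of a lower-dimensional stratum into a higher-dimensional incomparable one, but since you back it up with witnesses for every incomparable pair, the exhaustiveness claim does go through.
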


The proof consists in checking that these 18 algebraic sets intersect as
expected, which amounts to computing radicals of sums of two ideals according
to the usual correspondence between algebraic sets and polynomial ideal
(\emph{Hilbert's Nullstellensatz}),
\begin{center}
  \begin{tabular}{ccc}
    ideals & $\longleftrightarrow$ & algebraic sets\,,\\[0.1cm]
    $\ClId{\sA} + \ClId{\sA'}$ & $\longrightarrow$ & $V(\,\ClId{\sA} + \ClId{\sA'}\,) = V\ClId{\sA} \cap V\ClId{\sA'}$\,,\\
    $\Id(V \cap V') = \Rad(\Id(V)+\Id(V'))$ & $\longleftarrow$ & $V \cap V'\,.$
  \end{tabular}
\end{center}
\smallskip

We proceed by increasing dimension. Given a dimension $d$, we check that the pairwise intersections of the algebraic sets of ideals of dimension $d$ with the algebraic sets of ideals of
dimension $d-1$, or other ideals of dimension $d$,  decompose as illustrated in
Fig.~\ref{fig:specialize}.

For instance, the two by two intersections of the algebraic sets
of~\ref{eq:rA1p6}, \ref{eq:A2p3}, \ref{eq:rA1A3} and~\ref{eq:A4} are all equal
to the unstable locus $(O)=(I_3, I_6, \ldots, I_{27})$. If we now consider~\ref{eq:rA1p5}, we can similarly check that we have %
$V$\ref{eq:rA1p5}$ \,\cap\, V$\ref{eq:A4}$ = V(O)$\,, %
$V$\ref{eq:rA1p5}$ \,\cap\, V$\ref{eq:rA1A3}$ = V$\ref{eq:rA1A3}\,, %
$V$\ref{eq:rA1p5}$ \,\cap\, V$\ref{eq:A2p3}$ = V(O)$ %
and %
$V$\ref{eq:rA1p5}$ \,\cap\, V$\ref{eq:rA1p6}$ = V$\ref{eq:rA1p6}\,.
When the dimension increases, more complex situations can occur, such as
for example
\begin{center}
  $V$\ref{eq:A3}$ \,\cap\, V$\ref{eq:rA1p5}$ = V$\ref{eq:rA1A3}\,, %
  \text{ and }
  $V$\ref{eq:A3}$ \,\cap\, V$\ref{eq:A1A2p2}$ = V$\ref{eq:A4}$\,\cup\,V$\ref{eq:rA1A3}\,. %
\end{center}
\smallskip

From a computational point of view, the sum of two ideals is obtained by
joining generators, from which a basis can be extracted by a Gr\"obner basis
calculation.
It takes a handful of seconds to compute in \textsc{magma} a Gr\"obner basis of
those ideals, over $\Q$, for the graded reverse lexicographical (or
``grevlex'') order $I_3 < I_6 < \cdots < I_{27}$ with weights 3, \ldots , 27.

\begin{remark}
  Upon performing the same computations in positive characteristic $p>0$, we
  have observed that, depending on $p$, some of the expected inclusions may
  not occur. For example, $V\ref{eq:A3} \cap\, V\ref{eq:A1A2p2}$ is equal to
  $V\ref{eq:A4} \cup V\ref{eq:rA1A3}$ over $\Q$, while we only have
  $V\ref{eq:A4}$ modulo $p=11$.
\end{remark}

\subsection{Strata of small dimensions}
\label{sec:other-strata}

We first consider Prop.~\ref{prop:dim0} to Prop.~\ref{prop:dim3}.
We can already notice that it is immediate to check that a Hui normal form for
those types $\sA$ have their Dixmier-Ohno invariants in $V\ClId{\sA}$.
The main task is therefore to show that there is no quartic with singularity
type $\sA$ whose invariants are in the algebraic set defined by one of the
substrata below $\sA$ in the specialisation graph on
Fig.~\ref{fig:specialize}.

Since the dimensions are sufficiently small here, we can follow a direct
approach. Suppose we start with a Hui normal form $F_\sA$ for some
type $\sA$ such that $\DO(F_\sA)\in V\ClId{\sA'} \subseteq V\ClId{\sA}$, where $\sA'$ is a substratum which is more degenerate than $\sA$. Then we want to show that the singularity type of $F_\sA$ is more degenerate than just $\sA$.
We start by computing a decomposition into primary ideals of the ideal
obtained from the generators of $\ClId{\sA'}$ that we evaluate in
$\DO(F_\sA)$. We then reduce the equations which define the locus of the
singular points of $F_\sA$ modulo each component of this decomposition.  By
further Gr\"obner basis calculations, applied this time to each of these
polynomial systems, we determine explicitly the singular points of $F_\sA$.
It remains to quantify the type of each of them to finally
ensure that the singularity type of forms $F_\sA$ whose invariants are
restricted to V$\ClId{\sA'}$ cannot remain $\sA$.

Most of these computations can be easily done by the \textsc{magma}
commutative algebra engine, except the determination of the type of a singular
point. This part relies on another native \textsc{magma} package, developed by
M.~Harrison in full generality for schemes.

\begin{example}
  As an illustration, we take the example of a normal form for $\sA_1^3$,
  \begin{displaymath}
    F_{\alpha,\beta,\gamma}(x,y,z)
    =({y}^{2}+\alpha\,yx+{x}^{2}){z}^{2}+(\beta\,{y}^{2}x+\gamma\,y{x}^{2})z+{y}^{2}{x}^{2}\,,
  \end{displaymath}
  that we specialise to~\ref{eq:rA1p4b}.
  The decomposition in primary ideals of ~\ref{eq:rA1p4b} evaluated at
  $\DO(F_{\alpha,\beta,\gamma})$ yields only one radical ideal,
  \begin{displaymath}
    \alpha^2 + \beta^2 + \gamma^2 - \alpha\,\beta\,\gamma - 4 \,=\,0\,,
  \end{displaymath}
  and modulo this ideal, the form $F_{\alpha,\beta,\gamma}$ has generically 4
  singular points,
  \begin{multline*}
    (1 : 0 : 0)\,, (0 : 1 : 0)\,, (0 : 0 : 1) \text{ and }\\
    \left(\,(2\,\beta-\alpha\,\gamma)\,(\beta^2-4): (2\,\gamma-\alpha\,\beta)\,(\gamma^2-4): (\gamma^2-4)\,(\beta^2-4)\,\right)\,.
  \end{multline*}
  It remains to determine the singularity type of these points, and it turns
  out to be $\sA_1$.  In other words, we proved that a normal form for
  $\sA_1^3$ specialised to~$V$\ref{eq:rA1p4b} is generically of type
  $\sA_1^4$. Whether it is $\,^r\sA_1^4\:_{con}$ or $\,^r\sA_1^4\:_{cub}$ does
  not really matter here, the important point is that
  $F_{\alpha,\beta,\gamma}$ is no longer of type $\sA_1^3$.

  Note that for particular values of the parameters, the fourth singular point
  can collide with one of the first three. But we can check that one of the
  singularities is then more complex than $\sA_1$. Typically, for
  $\beta=\gamma=0$, and thus $\alpha=\pm 2$, it turns out that
  $F_{\alpha,\beta,\gamma}$ is of type $\sA_1^2\sA_3$.
\end{example}

\subsection{Specialisations of Hui normal forms for
  \texorpdfstring{$\sA_1$}{A1}, \texorpdfstring{$\sA_1^2$}{A1\^{}2} and
  \texorpdfstring{$\sA_2$}{A2} }
\label{sec:spec-hui-norm}
The method described in Sec.~\ref{sec:other-strata} does not work for the strata $\sA_1$, $\sA_1^2$, and $\sA_2$, because the Gr\"obner basis computations needed for the method did not finish within a reasonable time. In this section, we discuss a different method that we used to prove Prop.~\ref{prop:dim4} and \ref{prop:dim5}, \textit{i.e.}\ the correctness of the algorithm for the strata of dimensions 4 and 5.

We first notice that all normal forms for $\sA_1$ have more than one
singularity, or not, depending on whether a certain polynomial in its
parameters is zero, or not. In particular, because of the special form of
these quartic equations, the node singularity cannot degenerate further. It
yields this technical lemma.

\begin{lemma}\label{lem:singA1}
  Let
  \begin{displaymath}
    F_{\sA_1}(x,y,z) =
    y{z}^{3}+(\alpha\,{y}^{2}+{x}^{2}){z}^{2}+(\beta\,{y}^{3}+\gamma\,{y}^{2}x+y{x}^{2})z+\delta\,{y}^{4}+\epsilon\,{y}^{3}x
  \end{displaymath}
  be a Hui normal form for $\sA_1$ over an algebraically closed
  field $\overline{K}$ of characteristic~0, then $F_{\sA_1}(x,y,z)$ has at least two
  singularities if and only if $\alpha$, $\beta$, $\gamma$, $\delta$ and
  $\epsilon$ are a zero of a polynomial
  $\iota_{\sA_1}(\alpha, \beta, \gamma, \delta, \epsilon)$ of degree 6 in
  $\alpha$, $\beta$ and $\delta$, degree 8 in $\epsilon$, and degree 9 in
  $\gamma$ (cf.~\cite[file
\texttt{G3SingularProof.m}]{BGit23} for its expression).
  In all cases, $F_{\sA_1}$ has at least one $\sA_1$-singularity.
\end{lemma}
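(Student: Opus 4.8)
The plan is to exploit the special shape of the normal form, namely that $F_{\sA_1}$ is a quadratic polynomial in the variable $x$. Writing
\[
  F_{\sA_1} = A(y,z)\,x^2 + B(y,z)\,x + C(y,z),
\]
a direct expansion gives $A = z(y+z)$, $B = y^2(\gamma z + \epsilon y)$ and $C = y\,(z^3 + \alpha y z^2 + \beta y^2 z + \delta y^3)$, of degrees $2$, $3$, $4$ in $(y,z)$. Geometrically this presents the curve as the degree-two cover of $\mathbb{P}^1_{(y:z)}$ obtained by projection from the point $P_0=(1:0:0)$, and this is the structure I would use throughout.

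First I would settle the last assertion. Dehomogenising at $x=1$, the lowest-order part of $F_{\sA_1}(1,y,z)$ is the quadratic form $A = z(y+z)$, which is a product of two distinct linear forms and hence of rank $2$, independently of the parameters. Thus $P_0$ is a common zero of all partial derivatives with nondegenerate Hessian, i.e.\ an ordinary node, so $F_{\sA_1}$ always carries an $\sA_1$-singularity at $P_0$ and, as noted in Rem.~\ref{rem:huiforms}, this node can never degenerate further.

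Next I would account for the additional singularities. Over a point $(y_0:z_0)$ with $A(y_0,z_0)\neq 0$ the two sheets of the cover are $x_\pm = \bigl(-B\pm\sqrt{\Delta}\bigr)/(2A)$ with $\Delta = B^2 - 4AC$ the (binary sextic) discriminant in $x$; the curve $C$ is smooth over a simple zero of $\Delta$ and acquires a node over a double zero of $\Delta$ lying off $V(A)$. One checks that $y\mid\Delta$, with $\widetilde\Delta := \Delta/y$ a binary quintic whose value at $(0:1)$ is $-4\neq 0$, so the multiple roots of $\Delta$ are exactly those of $\widetilde\Delta$. Hence $F_{\sA_1}$ has a second singular point if and only if $\widetilde\Delta$ has a repeated root away from the two points of $V(A)$, where $\widetilde\Delta$ takes the values $\epsilon^2$ and $(\epsilon-\gamma)^2$. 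The polynomial $\iota_{\sA_1}$ is then the condition for such a repeated root: concretely, the discriminant of the quintic $\widetilde\Delta$ with the spurious factors corresponding to repeated roots over $V(A)$ removed, equivalently the elimination to $\operatorname{Spec}\overline{K}[\alpha,\beta,\gamma,\delta,\epsilon]$ of the residual singular subscheme $\bigl((F_x,F_y,F_z):(y,z)^\infty\bigr)$. That this elimination ideal is principal, and that its generator has the stated degree profile ($6$ in $\alpha,\beta,\delta$, $8$ in $\epsilon$, $9$ in $\gamma$), is verified by the Gröbner/resultant computation recorded in~\cite[file \texttt{G3SingularProof.m}]{BGit23}.

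The hard part will be making this correspondence exact in both directions rather than merely generic: one must control the degenerate loci where a repeated root of $\widetilde\Delta$ sits over $V(A)$ (recording the node $P_0$ rather than a genuinely new point) or at infinity, and confirm that removing these spurious factors leaves a single polynomial cutting out precisely the locus of curves with a second singularity. Here the fact proved above, that $P_0$ remains an ordinary node for every choice of parameters, is decisive: no extra singular point can collide into $P_0$, which is exactly what guarantees that the residual scheme is disjoint from $P_0$ and that the saturation computes the intended object. Verifying principality together with the precise degrees in $\alpha,\beta,\gamma,\delta,\epsilon$ is then the computational heart of the argument.
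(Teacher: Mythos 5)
Your approach is genuinely different from the paper's: the paper writes down the four equations cutting out the singular locus, observes that $y=0$ forces the point $(1:0:0)$ (always an ordinary node), and for $y\neq 0$ sets $y=1$ and eliminates $x,z$ by a Gr\"obner basis computation to obtain $\iota_{\sA_1}$ directly. Your reformulation via the projection from the node, with $F=Ax^2+Bx+C$ and the discriminant $\Delta=B^2-4AC$, is attractive and your treatment of the node at $(1:0:0)$ and of the spurious factor $y\mid\Delta$ is correct. However, the central biconditional you assert --- ``$F_{\sA_1}$ has a second singular point if and only if $\widetilde\Delta$ has a repeated root away from the two points of $V(A)$'' --- is false on a positive-dimensional locus of parameters, and this is a genuine gap, not a technicality you can defer to the computation.

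The problem is that extra singular points can sit \emph{in the fibres over $V(A)$}, i.e.\ on the two tangent lines $z=0$ and $z=-y$ of the node. A B\'ezout count shows this happens exactly when such a line is a component of the quartic: for $z=0$ this is the locus $\delta=\epsilon=0$, and for $z=-y$ the locus $\gamma=\epsilon$, $\alpha-\beta+\delta=1$. Take for instance $(\alpha,\beta,\gamma,\delta,\epsilon)=(1,1,1,0,0)$, so $F=z\,(yz^2+y^2z+x^2z+y^3+y^2x+yx^2)$. This curve has singular points at $(x_0:1:0)$ with $x_0^2+x_0+1=0$, in addition to the node at $(1:0:0)$, so $\iota_{\sA_1}$ must vanish here. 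Yet $\widetilde\Delta=z^2\cdot(-3y^3-8y^2z-8yz^2-4z^3)$, whose only repeated root is $(y:z)=(1:0)\in V(A)$ --- precisely a root you discard as ``spurious, recording the node $P_0$ rather than a genuinely new point.'' Your disjointness argument only shows that no extra singularity collides into $P_0$ itself; it does not show that none lies elsewhere on the fibres over $V(A)$. Consequently the two descriptions you declare equivalent are not: the saturation $\bigl((F_x,F_y,F_z):(y,z)^\infty\bigr)$ followed by elimination does detect these points (they have $y\neq 0$) and is essentially the paper's computation, whereas ``disc$(\widetilde\Delta)$ with the factors supported over $V(A)$ removed'' does not. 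To repair your route you must add the two component conditions above as an explicit separate case (and verify they lie in $V(\iota_{\sA_1})$), or work with the full singular scheme as the paper does.
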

\begin{proof}
  Let $(x:y:z)\in\PP^2$ be a singular point of $F_{\sA_1}$. We thus have
  \begin{displaymath}
    \left\{
      \begin{array}{rcl}
        0&=&x^2\,y + 2\,x^2\,z + \gamma\,x\,y^2 + \beta\,y^3 + 2\,\alpha\,y^2\,z + 3\,y\,z^2\,,\\
        0&=&x^2\,z + 3\,\epsilon\,x\,y^2 + 2\,\gamma\,x\,y\,z + 4\,\delta\,y^3 +
             3\,\beta\,y^2\,z + 2\,\alpha\,y\,z^2 + z^3\,,\\
        0&=&2\,x\,y\,z + 2\,x\,z^2 + \epsilon\,y^3 + \gamma\,y^2\,z\,,\\[0.1cm]
        0&=&x^2\,y\,z + x^2\,z^2 + \epsilon\,x\,y^3 + \gamma\,x\,y^2\,z +
             \delta\,y^4 + \beta\,y^3\,z + \alpha\,y^2\,z^2 + y\,z^3\,.
      \end{array}\right.
  \end{displaymath}
  Let us first assume that $y=0$, and thus
  $0 = 2\,x^2\,z = x^2\,z+z^3 = 2\,x\,z^2 = x^2\,z^2$. As expected, it yields
  the singular point $(1:0:0)$ which is of type $\sA_1$ whatever the
  parameters $\alpha$, $\beta$, $\gamma$, $\delta$ and $\epsilon$ are.

  Now, assume that $y\neq 0$, \textit{i.e.}\ without loss of generality
  $y=1$. By eliminating $x$ and $z$ in the system of equations thanks to a
  Gr\"obner basis computation, we arrive at a single equation, which
  corresponds to $\iota_{\sA_1}(\alpha, \beta, \gamma, \delta,
  \epsilon)$. Restricted to parameters $\alpha$, \ldots, $\epsilon$ that are
  zeros of this polynomial, the form $F_{\sA_1}$ has one or several
  singular points of the form $(x:1:z)$. Otherwise, the only singular point of
  $F_{\sA_1}$ is $(1:0:0)$.
\end{proof}

Let us now return to our original problem, \textit{i.e.}\ to show that there is
no quartic with a unique singularity $\sA_1$ whose invariants are in the
algebraic set defined by one of the ideals $\ClId{\sA'}$ of
Sec.~\ref{sec:sing-invar-strata}. So, let us evaluate the generators of
those ideals into the Dixmier-Ohno invariants of the Hui normal form. The
obtained polynomials define an ideal in
$\Q[\alpha, \beta, \gamma, \delta, \epsilon]$ of which we can compute a
Gr\"obner basis, this time for the \textit{grevlex} order
$\alpha < \epsilon < \gamma < \delta < \beta$. It remains then to reduce
$\iota_{\sA_1}(\alpha, \beta, \gamma, \delta, \epsilon)$ modulo this basis and
to check that the result is indeed $0$.

We made these calculations for the 17 ideals $\ClId{\sA'} = \ref{eq:A1p2}$,
$\ref{eq:A2}\ldots$ and check modulo each of them that
$\iota_{\sA_1}(\alpha, \beta, \gamma, \delta, \epsilon)$ is indeed zero. In
\textsc{magma} this is a matter of seconds, except for a very small number
of cases. The longest calculation is for~\ref{eq:A1A2p2}, which takes half an
hour on a standard laptop.

We can verify in the same way that a quartic of type $\sA_1^2$, or $\sA_2$,
cannot have its Dixmier-Ohno invariants in the algebraic set of one of the
ideals below~$V$\ref{eq:A1p2}, or~$V$\ref{eq:A2}. For this we need an
equivalent of Lem.~\ref{lem:singA1}. These are the two technical lemmas that
follow. We omit their proofs, because they are quite similar to that of
Lem.~\ref{lem:singA1}.

\begin{lemma}\label{lem:singA1p2}
  Let
  \begin{math}
    F(x,y,z) =
    ({y}^{2}+{x}^{2}){z}^{2}+(\alpha\,y{x}^{2}+\beta\,{x}^{3})z+{y}^{2}{x}^{2}+\gamma\,y{x}^{3}+\delta\,{x}^{4}
  \end{math}
  be a Hui normal form for $\sA_1^2$ over an algebraically closed
  field $\overline{K}$ of characteristic~0. Then $F(x,y,z)$ has at least three
  singularities if and only if $\alpha$, $\beta$, $\gamma$ and $\delta$ are a
  zero of a polynomial $\iota_{\sA_1^2}(\alpha, \beta, \gamma, \delta)$ of
  degree 5 in $\delta$, degree 6 in $\beta$ and $\gamma$, and degree 8 in
  $\alpha$ (cf.~\cite[file
\texttt{G3SingularProof.m}]{BGit23} for its expression).
  In all cases, $F$ has at least two $\sA_1$-singularities. %
\end{lemma}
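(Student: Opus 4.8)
The plan is to mimic exactly the proof of Lem.~\ref{lem:singA1}, replacing the $\sA_1$ normal form by the $\sA_1^2$ normal form $F(x,y,z) = (y^2+x^2)z^2+(\alpha\,yx^2+\beta\,x^3)z+y^2x^2+\gamma\,yx^3+\delta\,x^4$. First I would form the Jacobian ideal, that is, the system $F = \partial_x F = \partial_y F = \partial_z F = 0$, whose solutions in $\PP^2$ are the singular points of $F$. By Rem.~\ref{rem:huiforms}, this normal form always carries the two prescribed $\sA_1$ singularities at $(1:0:0)$ and $(0:1:0)$ for every choice of parameters; the substance of the lemma is to detect when there is a \emph{third} singular point.

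\textbf{Isolating the third singular point.} The key step is to split the analysis according to whether a coordinate vanishes, exactly as in the proof of Lem.~\ref{lem:singA1}. Substituting $x=0$ should recover the singularity at $(0:1:0)$, and the case $y=0$ (together with the relevant monomial structure) should recover $(1:0:0)$; in each case one checks these points are of exactly type $\sA_1$ regardless of the parameters, establishing the final sentence of the lemma. One then assumes $x\neq 0$, normalises $x=1$, and eliminates $y$ and $z$ from the remaining Jacobian equations by a Gröbner basis computation. This elimination is expected to collapse to a single polynomial relation in the parameters, which I would \emph{define} to be $\iota_{\sA_1^2}(\alpha,\beta,\gamma,\delta)$. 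Its degree profile (degree $5$ in $\delta$, degree $6$ in $\beta$ and $\gamma$, degree $8$ in $\alpha$) is then read off from the output and recorded; the explicit expression is deferred to the computational file~\cite[file \texttt{G3SingularProof.m}]{BGit23}.

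\textbf{The main obstacle and how I expect to handle it.} The delicate point, just as for $\sA_1$, is to confirm that the vanishing of $\iota_{\sA_1^2}$ is genuinely \emph{equivalent} to the existence of a third singular point, not merely necessary. Concretely, one must ensure that the extra solution produced by the elimination does not spuriously coincide with the two known singular points $(1:0:0)$ and $(0:1:0)$ for generic parameters satisfying $\iota_{\sA_1^2}=0$, and conversely that whenever $\iota_{\sA_1^2}\neq 0$ the only singular points are those two nodes. I expect this to require a careful primary decomposition of the eliminated ideal and a check that each relevant component contributes a new point in $\PP^2$ distinct from the prescribed ones; collision of the third point with one of the two nodes should, exactly as in the $\sA_1^3$ example worked out in Sec.~\ref{sec:other-strata}, force a singularity more degenerate than $\sA_1$, which is still consistent with the ``at least three singularities'' conclusion once multiplicities are accounted for. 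Since the structure of this verification is identical to that of Lem.~\ref{lem:singA1}, I would omit the routine details and simply indicate that the computation is carried out in \textsc{magma}, recording only the degree data and the final equivalence.
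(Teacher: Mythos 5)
Your overall strategy is exactly the one the paper intends: it omits the proof of Lemma~\ref{lem:singA1p2} precisely because it is the routine adaptation of the proof of Lemma~\ref{lem:singA1} (Jacobian system, case split on a vanishing coordinate to recover the prescribed nodes and check they are always of type $\sA_1$, then eliminate the remaining coordinates by a Gr\"obner basis to obtain the single parameter condition $\iota_{\sA_1^2}$, whose degree profile is read off the output). So in substance you reproduce the paper's argument.

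One concrete slip, though: the two prescribed nodes of this normal form are \emph{not} at $(1:0:0)$ and $(0:1:0)$. Indeed $F(1,0,0)=\delta$, so $(1:0:0)$ is generically not even on the curve; the two nodes sit at $(0:1:0)$ and $(0:0:1)$, with nondegenerate quadratic parts $z^2+x^2$ and $y^2+x^2$ respectively, and both lie on the line $x=0$. Consequently the correct case split is simply $x=0$ versus $x\neq 0$: the chart $x=0$ yields exactly the two known nodes (the Jacobian equations there force $yz=0$), and the elimination is carried out after normalising $x=1$. This also dissolves the collision worry in your last paragraph --- any third singular point detected by $\iota_{\sA_1^2}=0$ has $x\neq 0$ and therefore cannot coincide with either of the two prescribed nodes, exactly as in the proof of Lemma~\ref{lem:singA1} where the extra singular points have $y\neq 0$.
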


\begin{lemma}\label{lem:singA2}
  Let
  \begin{math}
    F(x,y,z) =
    y{z}^{3}+(\alpha\,{y}^{2}+\beta\,yx+{x}^{2}){z}^{2}+(\gamma\,{y}^{3}+\delta\,{y}^{2}x)z+{y}^{3}x
  \end{math}
  be a Hui normal form for $\sA_2$ over an algebraically closed
  field $\overline{K}$ of characteristic~0. Then $F(x,y,z)$ has at least two
  singularities if and only if $\alpha$, $\beta$, $\gamma$ and $\delta$ are a
  zero of a polynomial $\iota_{\sA_2}(\alpha, \beta, \gamma, \delta)$ of
  degree 5 in $\alpha$ and $\gamma$, and degree 7 in $\beta$ and $\delta$
  (cf.~\cite[file
\texttt{G3SingularProof.m}]{BGit23} for its expression).
  In all cases, $F$ has at least one $\sA_2$-singularity. %
\end{lemma}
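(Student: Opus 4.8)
The plan is to mirror the proof of Lem.~\ref{lem:singA1}. A point $(x:y:z)\in\PP^2$ is a singular point of $C\colon F=0$ precisely when the three partial derivatives $F_x$, $F_y$ and $F_z$ vanish there; in characteristic~$0$ the condition $F=0$ is then automatic from Euler's identity $4F = xF_x+yF_y+zF_z$, so it suffices to analyse the common zero locus of the partials. I would split this locus according to whether the coordinate $y$ vanishes.

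First I would treat the case $y=0$. Substituting $y=0$ into the partials collapses them to $F_x = 2xz^2$, $F_y = z^3+\beta xz^2$ and $F_z = 2x^2z$, so that $F_z=0$ forces $x=0$ or $z=0$. The branch $x=0$, $z\neq 0$ is excluded because it leaves $F_y=z^3\neq 0$, whence the only singular point with $y=0$ is $(1:0:0)$, independently of the parameters. A short local computation at this point settles the last sentence of the lemma: dehomogenising by $x=1$ yields a local equation whose quadratic part is $z^2$ and whose cubic part is $y^3+\delta y^2z$, and the substitution $w=z+\tfrac{\delta}{2}y^2$ turns this into $w^2+y^3$ plus terms of total degree at least $4$, so $(1:0:0)$ is always of type $\sA_2$.

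Next, for the case $y\neq 0$, I would dehomogenise by setting $y=1$, turning the vanishing of $F_x$, $F_y$, $F_z$ into a system of three polynomials in $x$ and $z$ with coefficients in $\Q[\alpha,\beta,\gamma,\delta]$. Eliminating the two variables $x$ and $z$ from this system by a Gröbner basis computation produces the single polynomial $\iota_{\sA_2}(\alpha,\beta,\gamma,\delta)$, whose vanishing is equivalent to the existence of a solution $(x,z)$, that is, of a singular point $(x:1:z)$, necessarily distinct from $(1:0:0)$. The stated degree profile (degree $5$ in $\alpha$ and $\gamma$, degree $7$ in $\beta$ and $\delta$) can be read off from the output, and the explicit expression is recorded in~\cite[file \texttt{G3SingularProof.m}]{BGit23}.

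Combining the two cases yields the claimed equivalence: since the $y=0$ locus never contributes more than the single point $(1:0:0)$, the curve $C$ has a second singular point if and only if it has one with $y\neq 0$, i.e.\ if and only if $\iota_{\sA_2}$ vanishes. The hard part will be exactly the elimination step, as in Lem.~\ref{lem:singA1}: one must check computationally that the elimination ideal is principal, so that the condition is genuinely captured by a single polynomial $\iota_{\sA_2}$, and that no spurious factor (arising from solutions ``at infinity'' of the dehomogenised system) is introduced. Everything else is a routine adaptation of the $\sA_1$ argument, which is why the authors omit the details.
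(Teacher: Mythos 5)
Your proposal is correct and follows essentially the same route as the paper, which omits this proof precisely because it is the same argument as for Lem.~\ref{lem:singA1}: the $y=0$ branch yields only the parameter-independent singular point $(1:0:0)$ of type $\sA_2$, and the $y\neq 0$ branch is handled by setting $y=1$ and eliminating $x,z$ by a Gr\"obner basis computation to obtain the single polynomial $\iota_{\sA_2}$. One small slip worth noting: after dehomogenising at $x=1$ the cubic part of the local equation is $y^3+\delta y^2z+\beta yz^2$, not $y^3+\delta y^2z$, but the extra term is divisible by $z^2$ (hence only rescales the square by a unit $1+\beta y$) and does not affect the conclusion that $(1:0:0)$ is always an $\sA_2$-singularity.
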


Finally, once we check by an easy calculation that the Hui normal forms for
$\sA_1^2$ and $\sA_2$ have their respective Dixmier-Ohno invariants in
$V$\ref{eq:A1p2} and $V$\ref{eq:A2}, we know for sure that
Prop.~\ref{prop:dim4} is valid.

\section{From the singularity type to the reduction type}
\label{sec:quart-stable-reduct}

The Hui stratification of plane quartics has been considered in previous
sections. Unfortunately, this stratification does not behave well under the
action of (the group-scheme)~$\operatorname{SL}_3$. Indeed, if we were to try
to construct a quotient space for $\overline{\mathcal{F}}_{3,4}^{\:\mathrm{Hui}}$ by the
action of $\operatorname{SL}_3$, this space would not satisfy the valuative
criterion for properness~\cite[\href{https://stacks.math.columbia.edu/tag/03IX}{Tag 03IX}]{stacks-project}, as shown in Ex.~\ref{ex:5_1}.
Nevertheless, we manage in Sec.~\ref{sec:sing-invar-strata} to partially
relate the Hui stratification and the GIT compactification.

In this section, we aim to characterise the stable reduction type of a curve
in terms of the singularities of GIT-semi-stable quartics, see Thm.~\ref{thm:GIT-MD}. Using the results in Sec.~\ref{sec:sing-invar-strata},
Corollary~\ref{cor:stabsingtypes} relates then Dixmier-Ohno invariants and
(Deligne-Mumford stable) reduction types.

\subsection*{Notation}
Throughout this section, $K$ is a non-archimedean local field with valuation $v$ of residue
characteristic $p \neq 2$, $3$, $5$, $7$. Let $R$ denote its valuation
ring, $\pi$ a uniformiser and $k$ its residue field.

\begin{example}\label{ex:5_1}
  Let us consider the smooth plane quartic
  $\pi^5z^4+x^2z^2+\pi^2y^2z^2+\pi y^3z+y^4+yx^3=0$. The
  $\operatorname{SL}_3(K)$-change of variables
  $(x:y:z)\mapsto(\pi^{-1}x:y:\pi z)$ yields the model
  $\pi z^4+x^2z^2+y^2z^2+y^3z+y^4+\pi^3yx^3=0$. But these
  two $K$-isomorphic smooth plane quartics are not $\mathcal{O}_K$-isomorphic, because their special fibres are the the curves in
  Ex.~\ref{Ex:diffAtype}, which have different singularity types.
\end{example}

\subsection{Stable reduction types and invariants}
\label{sec:stable-reduct-types}

In this section we relate the singularity type and the (stable) reduction type of a plane quartic. We refer to Sec.~\ref{sec:stable-reduct-curv} for the notation for representing stable curves in Fig.~\ref{fig:labelledstablemodelgraph}.
\smallskip

\renewcommand{\ifcolored}{true}
\begin{figure}[htbp]
  \resizebox{1.0\linewidth}{0.5\textheight}{%
    \tikzsetnextfilename{singularitiesandstablemodelgraph}
    \begin{tikzpicture}[scale=1.4,g3lattice,%
      legend/.style={align=left,text width=2cm}]

      \node[above] at (6,9) (3) {\stbtype{\Smooth{\T}}{3}};

      \node[above] at (4,7.5) (2n) {\stbtype{\Aone{\Dn}}{2n}};
      \node[above] at (8,7.5) (2e) {\stbtype{\Atwo{\DeU}}{2e}};

      \draw[lin]
      (3) to (2n)
      (3) to (2e) ;

      \node[above] at (2,6) (1nn) {\stbtype{\Aoneptwo{\Unn}}{1nn}};
      \node[above] at (5,6) (1=1) {\stbtypehyp{\Athree{\UeU}}{1=1}};
      \node[above] at (6,6) (2m) {\stbtype{\Atwo{\DeZn}}{2m}};
      \node[above] at (7,6) (1ne) {\stbtype{\AoneAtwo{\UeUn}}{1ne}};
      \node[above] at (10,6) (1ee) {\stbtype{\Atwoptwo{\UeUeU}}{1ee}};

      \draw[lin]
      (2n) to (1nn)
      (2n) to (1=1)
      (2n) to[bend left=5] (1ne.130)
      (2n) to[bend left=5] (2m)
      (2e) to (1ne)
      (2e) to[bend right=5] (2m)
      (2e) to (1ee) ;

      \node[above] at (0,4.5) (0nnn) {\stbtype{\Aonepthree{\Znnn}}{0nnn}};
      \node[above] at (2.8,4.5) (1---0) {\stbtype{\RAonepthree{\UeeeZ}}{1\text{-}\text{-}\text{-}0}};
      \node[above] at (3.5,4.5) (1=0n) {\stbtypehyp{\Athree{\UeZn}}{1=0n}};
      \node[above] at (4.4,4.5) (0nne) {\stbtype{\AoneptwoAtwo{\ZeUnn}}{0nne}};
      \node[above] at (5.3,4.5) (1nm) {\stbtype{\AoneAtwo{\ZneUn}}{1nm}};
      \node[above] at (7.3,4.5) (1=0e) {\stbtypehyp{\Athree{\UeZeU}}{1=0e}};
      \node[above] at (8,4.5) (1me) {\stbtype{\Atwoptwo{\UneZeZ}}{1me}};
      \node[above] at (9,4.5) (0nee) {\stbtype{\AoneAtwoptwo{\ZeUneZ}}{0nee}};
      \node[above] at (12,4.5) (0eee) {\stbtype{\Atwopthree{\UeUeUeZ}}{0eee}};

      \draw[lin]
      (1nn) to (0nnn)
      (1nn) to (1---0.110)
      (1nn) to (1=0n.110)
      (1nn) to[bend left=5] (0nne.120)
      (1nn.350) to[bend left=10] (1nm)
      (1=1.230) to[bend right=10] (1=0n.85)
      (1=1.310) to[bend left=15] (1=0e)
      (1ne.210) to[bend right=10] (0nne.50)
      (1ne) to (1nm)
      (1ne) to (1=0e)
      (1ne) to[bend left=5] (1me.90)
      (1ne.340) to[bend left=5] (0nee.90)
      (2m.290) to[bend left=15] (1me.120)
      (1ee) to[bend right=10] (1me.70)
      (1ee) to (0nee) ;

      \draw[lin,shorten <=0.6em] (2m.270) to (1nm.50);
      \draw[lin,shorten >=0.6em] (1ee) to (0eee);

      \node[above] at (0.8,3) (0----0) {\stbtype{\RAonepfourb{\DNA}}{0\text{-}\text{-}\text{-}\text{-}0}};
      \node[above] at (1.6,3) (0---0n) {\stbtype{\RAonepfoura{\UeeeUn}}{0\text{-}\text{-}\text{-}0n}};
      \node[above] at (2.4,3) (0n=0n) {\stbtypehyp{\Athree{\ZneZn}}{0n=0n}};
      \node[above] at (3.2,3) (0nnm) {\stbtype{\AoneptwoAtwo{\UnneUn}}{0nnm}};
      \node[above] at (4.5,3) (Z=1) {\stbtypehyp{\RAoneAthree{\ZZeU}}{Z=1}};
      \node[above] at (5.2,3) (0---0e) {\stbtype{\AonepthreeAtwo{\UeeeUeZ}}{0\text{-}\text{-}\text{-}0e}};
      \node[above] at (5.9,3) (1=0m) {\stbtypehyp{\Athree{\UeZeZn}}{1=0m}};
      \node[above] at (6.6,3) (0n=0e) {\stbtypehyp{\RAoneAthree{\ZneZeU}}{0n=0e}};
      \node[above] at (7.3,3) (1mm) {\stbtype{\Atwoptwo{\ZneUeZn}}{1mm}};
      \node[above] at (8.2,3) (0nme) {\stbtype{\AoneAtwoptwo{\ZneZneU}}{0nme}};
      \node[above] at (9.8,3) (0e=0e) {\stbtypehyp{\Athree{\UeZZeU}}{0e=0e}};
      \node[above] at (10.7,3) (0mee) {\stbtype{\Atwopthree{\ZneUeUeZ}}{0mee}};

      \draw[lin]
      (0nnn) to (0----0.130)
      (0nnn) to[bend left=10] (0---0n.100)
      (0nnn) to[bend left=10] (0n=0n.100)
      (0nnn) to[bend left=15] (0nnm.100)
      (1---0.240) to[bend right=10] (0---0n.80)
      (1---0.298) to[bend left=10] (Z=1)
      (1---0.303) to[bend left=10] (0---0e.130)
      (1=0n.310) to[bend left=10] (0n=0e.135)
      (1=0n.300) to[bend left=10] (1=0m.130)
      (0nne.235) to[bend right=5] (0nnm)
      (0nne.320) to[bend left=10] (0nme.130)
      (1nm.220) to[bend right=10] (0nnm)
      (1nm.320) to[bend left=15] (0nme.110)
      (1=0e) to[bend right=10] (1=0m)
      (1=0e) to (0n=0e.80)
      (1=0e.305) to[bend left=10] (0e=0e.130)
      (1me) to (1mm.80)
      (1me.310) to[bend left=15] (0mee.110)
      (0nee) to[bend left=15] (0mee.90)
      (0eee) to (0mee) ;

      \draw[lin,shorten <=1em] (1=0n.250) to (0n=0n.70) ;
      \draw[lin,shorten <=1em] (1=0n.265) to (Z=1.120) ;
      \draw[lin,shorten >=-1em] (0nne.300) to[bend left=10] (0n=0e.120) ;
      \draw[lin,shorten <=0.5em] (0nne.295) to (0---0e) ;
      \draw[lin,shorten <=1.5em] (1nm.245) to[bend left=10] (1mm.100) ;
      \draw[lin,shorten <=0.2em] (1nm.270) to (1=0m.100) ;
      \draw[lin,shorten <=0.7em] (1me) to (0nme) ;
      \draw[lin,shorten <=0.7em] (0nee.270) to (0nme) ;
      \draw[lin,shorten <=0.7em] (0nee.280) to (0e=0e) ;

      \node[above] at (2.6,1.5) (CAVE) {\stbtype{\RAonepfive{\CAVE}}{CAVE}};
      \node[above] at (3.4,1.5) (Z=0n) {\stbtypehyp{\RAoneAthree{\ZZeUn}}{Z=0n}};
      \node[above] at (4.1,1.5) (0---0m) {\stbtype{\AonepthreeAtwo{\ZeeeZeZn}}{0\text{-}\text{-}\text{-}0m}};
      \node[above] at (4.8,1.5) (0n=0m) {\stbtypehyp{\Athree{\ZneZeZn}}{0n=0m}};
      \node[above] at (5.7,1.5) (0nmm) {\stbtype{\AoneAtwoptwo{\ZneZneZn}}{0nmm}};
      \node[above] at (7.4,1.5) (Z=0e) {\stbtypehyp{\RAoneAthree{\ZZeZeU}}{Z=0e}};
      \node[above] at (8.2,1.5) (0m=0e) {\stbtypehyp{\Athree{\UeZZeZn}}{0m=0e}};
      \node[above] at (9,1.5) (0mme) {\stbtype{\Atwopthree{\ZneZneUeZ}}{0mme}};

      \draw[lin]
      (0----0.300) to (CAVE)
      (0---0n.280) to (CAVE)
      (0---0n.295) to[bend left=10] (Z=0n)
      (0---0n.305) to[bend left=10] (0---0m.140)
      (0n=0n.270) to[bend left=10] (Z=0n.110)
      (0n=0n.290) to[bend left=10] (0n=0m)
      (Z=1.250) to[bend right=5] (Z=0n.90)
      (Z=1.300) to[bend left=10] (Z=0e)
      (0nnm) to (0---0m.115)
      (0nnm.310) to[bend left=10] (0n=0m.110)
      (0nnm.320) to[bend left=10] (0nmm.100)
      (0---0e.260) to (0---0m.70)
      (0---0e.280) to[bend left=10] (Z=0e.135)
      (1=0m.260) to[bend right=10] (0n=0m.80)
      (1=0m.270) to[bend left=10] (0m=0e.120)
      (0n=0e.260) to[bend right=10] (0n=0m.70)
      (0n=0e.270) to (Z=0e.120)
      (0n=0e.280) to[bend left=10] (0m=0e.100)
      (1mm.260) to[bend right=10] (0nmm.65)
      (1mm.280) to[bend left=10] (0mme)
      (0nme.250) to[bend right=10] (0nmm)
      (0nme) to (0m=0e)
      (0nme.290) to[bend left=10] (0mme)
      (0e=0e.240) to[bend right=10] (0m=0e.70)
      (0mee) to (0mme) ;

      \node[above] at (4.6,0) (BRAID) {\stbtype{\RAonepsix{\BRAID}}{BRAID}};
      \node[above] at (5.3,0) (Z=Z) {\stbtypehyp{\RAoneAthree{\ZZeZZ}}{Z=Z}};
      \node[above] at (6,0) (Z=0m) {\stbtypehyp{\RAoneAthree{\ZZeZeZn}}{Z=0m}};
      \node[above] at (6.7,0) (0m=0m) {\stbtypehyp{\Athree{\ZneZZeZn}}{0m=0m}};
      \node[above] at (7.4,0) (0mmm) {\stbtype{\Atwopthree{\ZneZneZneZ}}{0mmm}};

      \draw[lin]
      (CAVE.300) to (BRAID.140)
      (CAVE.310) to[bend left=15] (Z=Z.100)
      (Z=0n.290) to[bend left=15] (Z=Z.90)
      (Z=0n.305) to[bend left=15] (Z=0m.150)
      (0---0m.290) to[bend left=15] (Z=0m.140)
      (0n=0m.280) to[bend left=10] (Z=0m)
      (0n=0m.290) to[bend left=10] (0m=0m)
      (0nmm.280) to[bend left=10] (0m=0m.100)
      (0nmm.300) to[bend left=10] (0mmm.100)
      (Z=0e.240) to[bend right=10] (Z=0m.90)
      (0m=0e.250) to[bend right=10] (0m=0m.80)
      (0mme.250) to (0mmm) ;

      \draw[line width = 4pt, SkyBlue] (0,2.35) -- (0.4,2.35); \node[legend] at (1.2,2.35) {\footnotesize $\sA_1$};
      \draw[line width = 4pt, Cyan] (0,2.1) -- (0.4,2.1); \node[legend] at (1.2,2.1)  {\footnotesize $\sA_1^2$};
      \draw[line width = 4pt, NavyBlue] (0,1.85) -- (0.4,1.85); \node[legend] at (1.2,1.85)  {\footnotesize $\sA_1^3$};
      \draw[line width = 4pt, MidnightBlue] (0,1.6) -- (0.4,1.6); \node[legend] at (1.2,1.6)  {\footnotesize $\,^r\sA_1^3$};
      \draw[line width = 4pt, Blue] (0,1.35) -- (0.4,1.35); \node[legend] at (1.2,1.35)  {\footnotesize $\,^r\sA_1^4\:_{cub}$};
      \draw[line width = 4pt, Periwinkle] (0,1.1) -- (0.4,1.1); \node[legend] at (1.2,1.1)  {\footnotesize $\,^r\sA_1^4\:_{con}$};
      \draw[line width = 4pt, Orchid] (0,0.85) -- (0.4,0.85); \node[legend] at (1.2,0.85)  {\footnotesize $\,^r\sA_1^5$};
      \draw[line width = 4pt, Fuchsia] (0,0.6) -- (0.4,0.6); \node[legend] at (1.2,0.6)  {\footnotesize $\,^r\sA_1^6$};

      \draw[line width = 4pt, pink] (0,0.25) -- (0.4,0.25); \node[legend] at (1.2,0.25)  {\footnotesize $\sA_2$};
      \draw[line width = 4pt, Lavender] (1.4,0.25) -- (1.8,0.25); \node[legend] at (2.6,0.25)  {\footnotesize $\sA_1\sA_2$};
      \draw[line width = 4pt, Magenta] (0,0) -- (0.4,0); \node[legend] at (1.2,0)  {\footnotesize $\sA_1^2\sA_2$};
      \draw[line width = 4pt, Red] (1.4,0) -- (1.8,0); \node[legend] at (2.6,0)  {\footnotesize $\,^r\sA_1^3\sA_2$};

      \draw[line width = 4pt, Goldenrod] (9.4,1.2) -- (9.8,1.2); \node[legend] at (10.6,1.2) {\footnotesize $\sA_2^2$};
      \draw[line width = 4pt, Apricot] (9.4,0.95) -- (9.8,0.95); \node[legend] at (10.6,0.95) {\footnotesize $\sA_1\sA_2^2$};

      \draw[line width = 4pt, orange] (9.4,0.7) -- (9.8,0.7); \node[legend] at (10.6,0.7) {\footnotesize $\sA_2^3$};

      \draw[line width = 4pt, Green] (9.4,0.25) -- (9.8,0.25); \node[legend] at (10.6,0.25) {\footnotesize
        $\sA_3$,~$\sA_1\sA_3$,~$\,^r\sA_1\sA_3$,~\textit{etc.}
      };

    \end{tikzpicture}
  }
  \caption{ \label{fig:labelledstablemodelgraph}
    \protect\begin{tabular}[t]{l}
              Stable model and singularity types of non-hyperelliptic genus 3 curves\\[-0.2cm]
              {\footnotesize (the genus of a component is indicated by its thickness)}
              \protect\end{tabular}
          }
  \renewcommand{\ifcolored}{false}
\end{figure}
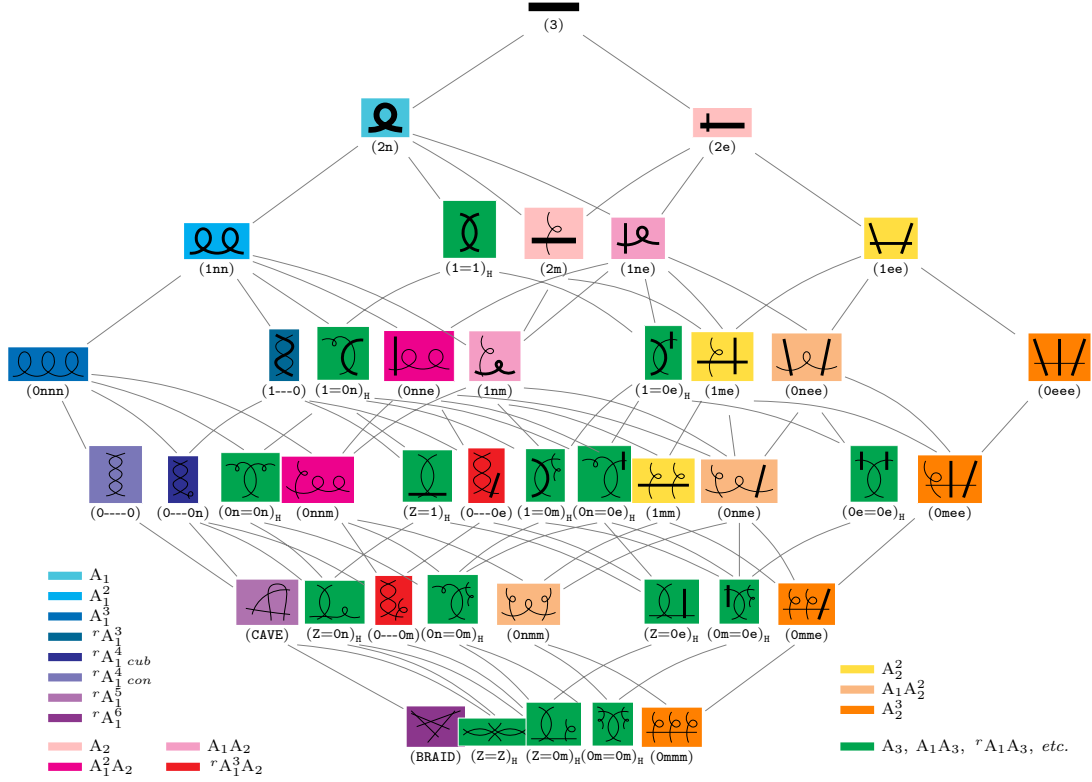

\begin{theorem}\label{thm:GIT-MD}
  Let $C/K$ be a plane quartic curve given by a smooth ternary quartic $F$ over a non-archimedean
  local field $K$ of residue characteristic $p \neq 2, 3, 5, 7$.
  Suppose that the reduction $\bar{F}$ of the ternary quartic is GIT-semi-stable.
  Then the stable reduction type of $C$ is, in terms of the singularity type
  of $\bar{F}$, as in Tab.~\ref{tab:SngToRed}.
\end{theorem}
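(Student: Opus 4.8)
The plan is to reduce the computation to the explicit Hui normal forms and then to carry out a semistable reduction singularity by singularity. First I would normalise the model. Since $\bar F$ is GIT-semi-stable, over the algebraic closure $\bar k$ it is $\SL_3(\bar k)$-equivalent to one of the finitely many Hui normal forms listed in Sec.~\ref{sec:singular-quartics}; after replacing $K$ by a finite unramified extension and lifting the reducing matrix from $\SL_3(k)$ to $\SL_3(R)$, I may assume that $F$ lies in $\PP^2_R$ and that $\bar F$ is literally a Hui normal form $\bar F_0$ of the prescribed singularity type. Because the transformation is integral with integral inverse, this changes neither the $K$-isomorphism class of $C$ nor its stable reduction type, and it preserves the singularity type of the special fibre. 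The subtlety illustrated by Ex.~\ref{ex:5_1} is exactly that only $\SL_3(R)$-, and not $\SL_3(K)$-, transformations are admissible here. Thus $F$ becomes a lift to $R$ of a fixed Hui normal form, with its remaining coefficients ranging over arbitrary elements of $R$ reducing to the prescribed parameter values.

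Next I would read off the global skeleton from $\bar F_0$. Normalising the plane curve $\{\bar F_0=0\}$ produces the positive-genus components of the stable model: for an irreducible normal form this is a single smooth curve of genus $3$ minus the sum of the $\delta$-invariants of the singularities, and for the reducible forms of Tab.~\ref{tab:rnf} these are the normalisations of the line, conic and cubic factors. This fixes the thick vertices of the dual graph of Fig.~\ref{fig:labelledstablemodelgraph}, together with the points at which the local contributions will later be attached.

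The heart of the proof is the local analysis at each singular point of $\bar F_0$. Near such a point the arithmetic surface $\{F=0\}$ is a one-parameter smoothing of a plane-curve singularity of type $\sA_n$ (a node, a cusp, a tacnode, and so on), and I would determine its minimal semistable model, if necessary after a tame base change of $K$. A node thickens to $xy=\pi^m$ for some $m\geq 1$; its resolution is a chain of rational curves which, having only two special points each, contract under stabilisation, so a node always contributes a single node joining the two branches -- this is why the purely nodal entries of Tab.~\ref{tab:SngToRed} are one-to-one. A cusp $y^2=x^3$ typically requires a degree-$6$ base change and contributes an elliptic tail; the tacnode and the higher $\sA_n$ singularities produce chains of elliptic or genus-$0$ components meeting at two points, whose precise configuration depends on finer valuation data of the lift and is therefore not determined by the singularity type alone. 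For each of the $33$ non-unstable strata I would run this local-to-global assembly, compare the outcome with the degeneration list of \cite{BDDLL2023} (tracking whether the result is hyperelliptic), and check agreement with Tab.~\ref{tab:SngToRed}; for the strata where the stable type is not unique I would, in addition, exhibit explicit lifts realising each listed possibility, so that the correspondence is complete.

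The main obstacle will be precisely the singularities worse than nodes, where several stable reduction types are compatible with a single singularity type. Here a uniform argument is unavailable: one must control the tame base change and the toric/Newton-polygon resolution of the smoothing (in the spirit of \cite{Dokchitser21}) finely enough to enumerate \emph{all} configurations that can occur, and then produce families attaining each of them. Ensuring that the list is exhaustive rather than merely containing the generic case, and that the bookkeeping of which elliptic or rational bridges survive stabilisation is correct across every stratum, is where the bulk of the careful work lies.
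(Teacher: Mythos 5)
Your overall strategy --- normalise to a Hui normal form over $R$ (lifting the reducing matrix from $\SL_3(k)$ to $\SL_3(R)$), then do a case-by-case analysis of the singularities of $\bar F$ --- is the same as the paper's, and your treatment of the purely nodal strata, of the cusps (each $\sA_2$ point contributing an arithmetic genus 1 tail whose own degeneration is invisible from $\bar F$, whence the $\mathtt{e}$/$\mathtt{m}$ alternatives), and of the tacnode strata (a bridge meeting the rest at two points, giving the $\mathtt{(*{=}*)_H}$ families with wildcards on the uncontrolled side) matches what the paper does. The main technical difference is that you phrase the local step as semistable reduction of a one-parameter smoothing of the plane-curve singularity, whereas the paper realises the components directly by explicit weighted $\pi$-scalings of the coordinates and an embedding into $\PP^2\times\PP^2$ (and, for the most degenerate $\,^r\sA_1^2\sA_3$ case, by Dokchitser's $\Delta_v$-regularity machinery). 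These are equivalent in substance; your version makes the ``which components survive contraction'' bookkeeping cleaner, the paper's version makes the identification of the surviving components as explicit plane curves easier.

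There is, however, a concrete gap in your treatment of the $\geq\sA_4$ strata. You assert that ``the tacnode and the higher $\sA_n$ singularities produce chains of elliptic or genus-$0$ components meeting at two points.'' That is correct for $\sA_3$ but false for the unibranch types $\sA_4$ and $\sA_6$ (the local contribution is attached at a single point), and more importantly it misses the actual content of that row of Tab.~\ref{tab:SngToRed}: for any singularity type $\geq\sA_4$ the stable reduction is \emph{hyperelliptic}, of a priori arbitrary type $\mathtt{(*)_H}$. Your local assembly would produce, say for $\sA_4$, the genus~$1$ normalisation of $\bar F$ with a genus~$2$ tail attached at one point; nothing in the local picture tells you that this configuration lies in the closure of the hyperelliptic locus (one must check that the attaching point on the tail is a Weierstrass point, and handle all the degenerate valuation configurations likewise). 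The paper sidesteps this entirely by observing that the $\sA_4$ Hui normal form is $(xz+y^2)^2+(\text{terms divisible by }x)$, so a further $\pi$-scaling moves the model into the $c^2$ stratum, after which the substitution $t=(xz+y^2)/\pi^{1/2}$ exhibits a genuine hyperelliptic equation $t^2+G(1,s,-s^2)=0$ whose cluster picture determines the type. Without this (or an equivalent argument establishing hyperellipticity uniformly across all lifts), your enumeration for the $\geq\sA_4$ row is incomplete. A smaller imprecision of the same flavour: the hyperelliptic/non-hyperelliptic distinction also has to be verified for the $\sA_3$ bridges, where the paper's two explicit arithmetic genus~$1$ components meeting at two points make the admissible involution visible.
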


\begin{remark}\label{rem:wildcard}
  The wildcard \texttt{*} used in Tab.~\ref{subtab:SngToRedA3} and
  Tab.~\ref{subtab:SngToRedrA3} represents any possible component in a
  reduction type, restricted to the set of 42 labels as  defined exactly in
  Fig.~\ref{fig:labelledstablemodelgraph}. For example, $\mathtt{(1=*)_H}$ can
  be any of the four types $\mathtt{(1=1)_H}$, $\mathtt{(1=0n)_H}$,
  $\mathtt{(1=e)_H}$ and $\mathtt{(1=0m)_H}$. But it cannot be
  $\mathtt{(Z=1)_H}$, which is covered by the expression $\mathtt{(*=1)_H}$
  instead.
\end{remark}

\begin{table}
  \renewcommand{\arraystretch}{0.85}
  \begin{subtable}[c]{0.19\textwidth}
    \begin{tabular}[t]{@{}l>{\footnotesize}c@{}}
      \toprule
      Sing. type & {\normalsize Red. type} \\
      \midrule
      \ \ $\sA_1$ & $\mathtt{(2n)}$ \\
      \ \ $\sA_1^2$    & $\mathtt{(1nn)}$  \\
      \ \ $\sA_1^3$   & $\mathtt{(0nnn)}$    \\
      $\,^r\sA_1^3$ & $\mathtt{(1\text{-}\text{-}\text{-}0)}$ \\
      $\,^r\sA_1^4\,_{con}$ & $\mathtt{(0\text{-}\text{-}\text{-}\text{-}0)}$  \\
      $\,^r{\sA_1^4}\,_{cub}$ & $\mathtt{(0\text{-}\text{-}\text{-}0n)}$  \\
      $\,^r\sA_1^5$ & $\mathtt{(CAVE)}$ \\
      $\,^r\sA_1^6$   & $\mathtt{(BRAID)}$   \\
      \bottomrule
    \end{tabular}
    \caption{$\sA_1$-singularities}
    \label{subtab:SngToRedA1}
  \end{subtable}
  \hspace{\fill}
  \begin{subtable}[c]{0.21\textwidth}
    \begin{tabular}[t]{@{}l>{\footnotesize}c@{}}
      \toprule
      Sing. type & {\normalsize Red. type} \\
      \midrule
      Smooth  & $\mathtt{(3)}$ \\
      $\geq\sA_4${\footnotesize, incl. $c^2$} & $\mathtt{(*)_H}$\\
      \bottomrule
    \end{tabular}
    \caption{Other types}
    \label{subtab:SngToRedOthers}
  \end{subtable}
  \hspace{\fill}
  \begin{subtable}[c]{0.45\textwidth}
    \begin{tabular}[t]{@{}l>{\footnotesize}c@{}}
      \toprule
      Sing. type & {\normalsize Red. type} \\
      \midrule
      \ \ $\sA_2$        & $\mathtt{(2e)}$ or $\mathtt{(2m)}$ \\
      \ \ $\sA_1\sA_2$   & $\mathtt{(1ne)}$ or $\mathtt{(1nm)}$ \\
      \ \ $\sA_1^2\sA_2$ & $\mathtt{(0nne)}$ or $\mathtt{(0nnm)}$ \\
      $\,^r\sA_1^3\sA_2$ & $\mathtt{(0\text{-}\text{-}\text{-}0e)}$ or $\mathtt{(0\text{-}\text{-}\text{-}0m)}$ \\
      \ \ $\sA_2^2$      & $\mathtt{(1ee)}$  or $\mathtt{(1em)}$ or $\mathtt{(1mm)}$ \\
      \ \ $\sA_1\sA_2^2$ & $\mathtt{(0nee)}$  or $\mathtt{(0nem)}$ or $\mathtt{(0nmm)}$ \\
      \ \ $\sA_2^3$      & $\mathtt{(0eee)}$  or $\mathtt{(0eem)}$  or $\mathtt{(0emm)}$ or $\mathtt{(0mmm)}$ \\
      \bottomrule
    \end{tabular}
    \caption{$\sA_2$-singularities}
    \label{subtab:SngToRedA2}
  \end{subtable}
  \bigskip\smallskip
  \hspace{\fill}
  \begin{subtable}[t]{0.45\textwidth}
    \begin{tabular}[t]{@{}l>{\footnotesize}c@{}}
      \toprule
      Sing. type    & {\normalsize Red. type} \\
      \midrule
      \ \ $\sA_3$       & $\mathtt{(1=*)_H}$ \\
      \ \ $\sA_1\sA_3$  & $\mathtt{(0n=*)_H}$ or $\mathtt{(*=0n)_H}$ \\
      $\,^r\sA_1^2\sA_3\,_{con}$ &  $\mathtt{(Z=*)_H}$ \\
      \ \ $\sA_2\sA_3$  & $\mathtt{(*=0e)_H}$ or $\mathtt{(*=0m)_H}$ \\
      $\,^r\sA_3^2$ & $\mathtt{(1=*)_H}$ or $\mathtt{(0n=*)_H}$ or $\mathtt{(Z=*)_H}$ \\
      \bottomrule
    \end{tabular}
    \caption{$\sA_3$-singularities}
    \label{subtab:SngToRedA3}
  \end{subtable}
  \hspace{\fill}
  \begin{subtable}[t]{0.45\textwidth}
    \begin{tabular}[t]{@{}l>{\footnotesize}c@{}}
      \toprule
      Sing. type    & {\normalsize Red. type} \\
      \midrule
      $\,^r\sA_1\sA_3$ &   $\mathtt{(1=*)_H}$ or $\mathtt{(*=1)_H}$ \\
      $\,^r\sA_1^2\sA_3\,_{cub}$ &  $\mathtt{(0n=*)_H}$ or $\mathtt{(*=0n)_H}$ \\
      $\,^r\sA_1^3\sA_3$ & $\mathtt{(Z=*)_H}$ \\
      $\,^r\sA_1\sA_2\sA_3$  & $\mathtt{(*=0e)_H}$ or $\mathtt{(*=0m)_H}$ \\
      $\,^r\sA_1\sA_3^2$&  $\mathtt{(1=*)_H}$ or $\mathtt{(0n=*)_H}$ or $\mathtt{(Z=*)_H}$ \\
      \bottomrule
    \end{tabular}
    \caption{$\,^r\sA_3$-singularities}
    \label{subtab:SngToRedrA3}
  \end{subtable}

  \caption{From singularity to reduction types}
  \label{tab:SngToRed}
\end{table}

Tab.~\ref{tab:SngToRed} shows that curves with stable reduction type of the
form $\mathtt{(*=*)_H}$ can have GIT-semi-stable special fibres of different
singularity types.
Ex.~\ref{ex:5_1} and Ex.~\ref{Ex:diffAtype} illustrate this phenomenon for a
quartic $F$ with stable reduction type $\mathtt{(1=0n)}$. Linear changes of
variables yield quartics with singularity types $\sA_3$ or $\sA_1\sA_3$, while
the normalised invariants are the same for both types of singularities.\smallskip

Our proof of this theorem is based on the Hui stratification, which as it
stands is only valid in residue characteristic $p \neq 2, 3, 5, 7$
(\textit{cf.}  Rem.~\ref{rem:huichar}). But this limitation can be removed if
we restrict ourselves to $\sA_1$ and Tab.~\ref{subtab:SngToRedA1}
(\textit{cf.}  Prop.~\ref{prop:node-only-sing}). More generally, $p \ne2 $,
$3$ seems to be sufficient, but this might require revising the Hui
stratification modulo $5$ and $7$, as we cannot preclude the existence of other singularity types.

\begin{example}\label{ex:plquarticsAnyHyperRed} It is easy to produce smooth plane quartics whose normalised invariants modulo a uniformiser $\pi$ are those of $c^2$ and  with stable reduction type any prescribed type that can be obtained from a hyperelliptic curve. Indeed, let $f\in K[u]$ be a degree $8$ polynomial with roots
producing the desired stable reduction type (see~\cite{m2d2}). Let $F(x,y,z)$
be a homogeneous degree 4 polynomial such that $f(u)=F(1,u,u^2)$. Such an $F$ exists
and is unique modulo $xz-y^2$. Then the genus 3 non-hyperelliptic curve
defined by
\begin{math}
  t^2=F(x,y,z) \text{ and } \pi^s t= xz-y^2
\end{math}, %
with $s$ large enough, has the desired stable reduction type too and admits a plane quartic model as
$(xz-y^2)^2-\pi^{2s}F(x,y,z)=0$.
\end{example}

It is difficult to find plane quartics satisfying the assumption of Thm.~\ref{thm:GIT-MD} of having a GIT-semi-stable reduction,
since the creation of such plane quartics often involves large degree field extensions.
However, its mere existence is enough to obtain explicit criteria in terms of
invariants. Indeed, by~\cite[Prop.~3.13]{lllr21}, for each quartic $F$, there exists an equivalent
GIT-semi-stable plane quartic model such that its invariants are minimal in the sense that their valuations are as small as possible among all quartics equivalent to $F$.
The connection in Sec.~\ref{sec:sing-invar-strata} between invariants and
singularity types will allow us to drop the GIT-semi-stable assumption from Thm.~\ref{thm:GIT-MD}.

To explain this further, we need to introduce the following notation.
Suppose that $F$ is a smooth ternary quartic over $K$ and that $I_w$ is an invariant of
weight $w$ for the action of $\SL_3(\C)$. We define the normalised valuation
$v_{\mathrm{S}}$ with respect to a set of invariants $\mathrm{S}$ as in~\cite{lllr21},
\textit{i.e.}\
\begin{displaymath}
  v_\mathrm{S}(I_w(F))=v(I_w(F))/w-\min_{J_\omega\in \mathrm{S}}\{v(J_\omega(F)) / \omega \}
\end{displaymath}
(in this formula, $J_\omega$ denotes an invariant of weight $\omega$).  The two sets
relevant here are the set $\DO{}$ of Dixmier-Ohno invariants or the set
$\iota$ of Shioda invariants written in terms of Dixmier-Ohno
invariants~\cite[Prop.\ 5.6]{lllr21}. We denote the corresponding normalised
valuations by $v_{\DO{}}$ and $v_{\iota}$.  We also say that $F$ has
normalised invariants if $v(I_w(F))=v_{\DO{}}(I_w(F))$ for one (and therefore all)
invariants of positive weight.

We know that the special fibre of the stable model of the curve $F=0$ is a
smooth plane quartic if and only if
$v_{\DO{}}(I_{27}(F))=0$~\cite[Thm.\ 1.9]{lllr21}. When this is not the case,
we still know that this special fibre is a smooth hyperelliptic curve of genus
3 if and only if $v_{\DO{}}(I_{3}(F))=0$ and
$v_{\iota}(I_{3}(F)^5\,I_{27}(F))=0$~\cite[Thm.\ 1.10]{lllr21}.

We extend these definitions to a set of invariants $\ClId{\sA}$, \textit{i.e.}\
\begin{math}
  v_{\mathrm{S}}(\ClId{\sA}(F)) = \min_{J_w \in \ClId{\sA}} v_{\mathrm{S}}(J_w(F))
\end{math}
Especially, the condition $v_{\DO{}}(\ClId{\sA}(F))>0$ may be interpreted as
the reduction of a plane quartic, with invariants the normalised ones of $F$,
being in the algebraic set defined by the reduction of the ideal generated by
$\ClId{\sA}$.
\smallskip

We can now reformulate Thm.~\ref{thm:GIT-MD} in terms of Dixmier-Ohno
invariants.
\begin{corollary}\label{cor:stabsingtypes}
  Let $C\colon F=0$ be a plane quartic curve over a non-archimedean local field $K$ of
  residue characteristic $p \neq 2, 3, 5, 7$. Then, under
  Conjecture~\ref{conj:poschar}, candidates for its reduction type  are returned by
  Alg.~\ref{algo:singularities} modified in such a way that:
  \begin{itemize}
  \item Membership tests ``$(I_3:I_6:\cdots:I_{27}) \in V\ClId{\sA}$'',
    are replaced by ``$v_{\DO{}}\ClId{\sA}(F)~>~0$'';
  \item The outputs are converted according to Tab.~\ref{tab:SngToRed}.
  \end{itemize}
\end{corollary}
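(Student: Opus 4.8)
The plan is to reduce everything to the GIT-semi-stable situation already settled by Thm.~\ref{thm:GIT-MD} and Thm.~\ref{thm:invtosingtypes}, using the normalisation machinery of~\cite{lllr21}. First I would invoke~\cite[Prop.~3.13]{lllr21} to replace $F$ by a $\GL_3(K)$-equivalent ternary quartic $F'$ with coefficients in $R$ whose reduction $\bar{F'}$ is GIT-semi-stable and whose invariants are normalised, so that $v(I_w(F'))=v_{\DO{}}(I_w(F))$ for every Dixmier-Ohno invariant $I_w$. Since the stable reduction type is a $K$-isomorphism invariant and $F$, $F'$ cut out isomorphic curves, the two have the same reduction type; it therefore suffices to produce the candidates for $F'$.

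The key step is then to identify the modified membership tests with the characteristic-$p$ tests of Alg.~\ref{algo:singularities} applied to $\DO(\bar{F'})$. Because the Dixmier-Ohno invariants are defined over $\Z[\tfrac{1}{2 \cdot 3}]$ and $p\neq 2,3$, their evaluation commutes with reduction modulo $\pi$, so after scaling the normalised coordinate vector $(I_3:\cdots:I_{27})$ of $F'$ by a suitable power of $\pi$ in the weighted projective space $\PP^{12}_{3,\ldots,27}$ (making every coordinate $\pi$-integral and at least one a unit, which is exactly the GIT-semi-stability of $\bar{F'}$), its reduction is the point $\DO(\bar{F'})\in\PP^{12}_{3,\ldots,27}(k)$, and $v_{\DO{}}(I_w(F))=0$ holds precisely when $\overline{I_w(F')}\neq 0$. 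Each generator $J$ of a stratum ideal $\ClId{\sA}$ is weighted-homogeneous with coefficients in $\Z[\tfrac{1}{2 \cdot 3}]$, so $v_{\DO{}}(J(F))>0$ is equivalent to the vanishing of its reduction $\bar{J}$ at $\DO(\bar{F'})$; taking the minimum over the generators yields
\[
  v_{\DO{}}\ClId{\sA}(F)>0 \iff \DO(\bar{F'})\in V\bigl(\overline{\ClId{\sA}}\bigr).
\]
Thus replacing each test ``$\DO(F)\in V\ClId{\sA}$'' by ``$v_{\DO{}}\ClId{\sA}(F)>0$'' amounts exactly to running Alg.~\ref{algo:singularities} on the input $\DO(\bar{F'})$ over the residue field $k$.

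It then remains to chain the two main theorems. Under Conj.~\ref{conj:poschar}, which asserts precisely the validity of Alg.~\ref{algo:singularities} in characteristic $p\neq 2,3,5,7$, Thm.~\ref{thm:invtosingtypes} applies to $\bar{F'}/k$, so the singularity type of $\bar{F'}$ is among the types returned by the unmodified algorithm on $\DO(\bar{F'})$. Finally, since $\bar{F'}$ is GIT-semi-stable, Thm.~\ref{thm:GIT-MD} converts that singularity type into the stable reduction type of the curve via Tab.~\ref{tab:SngToRed}; as the modified algorithm performs exactly this conversion on its outputs, the true reduction type of $C$ lies among the returned candidates.

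I expect the main obstacle to be the second step: verifying that $v_{\DO{}}$-positivity of the stratum relations genuinely detects membership of $\DO(\bar{F'})$ in the reduced stratum varieties. This requires care about the interaction of the weighted projective scaling with the mixed weights of the generators, and about the fact (noted after Lem.~\ref{lem:inctree}) that the characteristic-$p$ reductions $\overline{\ClId{\sA}}$ may differ from the reductions of the characteristic-$0$ ideals. Since the statement only claims a possibly non-tight \emph{list of candidates} and is conditional on Conj.~\ref{conj:poschar}, an over-approximation on the stratum side is harmless. The alignment between the unit locus of the normalised invariants and the reduction point $\DO(\bar{F'})$, furnished by~\cite[Prop.~3.13]{lllr21}, is what makes the $v_{\DO{}}$ criterion the correct characteristic-$0$ shadow of the characteristic-$p$ membership test.
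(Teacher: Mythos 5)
Your proposal is correct and follows essentially the same route the paper intends: the paper gives no separate proof of the corollary but derives it from the preceding discussion, namely \cite[Prop.~3.13]{lllr21} to pass to a GIT-semi-stable model with normalised invariants, the identification of the $v_{\DO{}}$-positivity tests with membership of the reduced invariants in the reduced stratum sets, and then Thm.~\ref{thm:invtosingtypes} (valid in residue characteristic $p$ under Conj.~\ref{conj:poschar}) chained with Thm.~\ref{thm:GIT-MD}. Your write-up simply makes these steps explicit, including the correct observation that the conjecture absorbs the characteristic-$p$ subtleties about the stratum ideals.
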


We note that the reduction modulo $\pi$ of normalised invariants encodes
exactly the reduction type for singularity types $\sA_1$ (\textit{e.g.}
Tab.~\ref{subtab:SngToRedA1}), offers very few possibilities for $\sA_2$
(\textit{e.g.}\ Tab.~\ref{subtab:SngToRedA2}), and gives only the hint
$\mathtt{(*=*)_H}$ for $\sA_3$ (\textit{e.g.}\  Tab.~\ref{subtab:SngToRedA3}
and Tab.~\ref{subtab:SngToRedrA3}).

\begin{example}\label{ex:braid}
  Consider the Dixmier-Ohno invariants
  \begin{footnotesize}
    \begin{multline*}
     (\,%
      \text{-}2^{\text{-}4}\, 3^{\text{-}2},\ %
      \text{-}2^{\text{-}12}\, 3^{\text{-}6},\ %
      \text{-}2^{\text{-}12}\, 3^{\text{-}8}+O(\pi),\ %
      2^{\text{-}12}\, 3^{\text{-}7}+O(\pi),\ %
      \text{-}2^{\text{-}14}\, 3^{\text{-}12},\ %
      \text{-}2^{\text{-}17}\, 3^{\text{-}10}+O(\pi),\ %
      2^{\text{-}22}\, 3^{\text{-}15}+O(\pi),\ \\%
      \text{-}2^{\text{-}22}\, 3^{\text{-}12}+O(\pi),\ %
      2^{\text{-}24}\, 3^{\text{-}17}+O(\pi),\ %
      2^{\text{-}24}\, 3^{\text{-}15}+O(\pi),\ %
      \text{-}2^{\text{-}29}\, 3^{\text{-}18}+O(\pi),\ %
      \ 2^{\text{-}32}\, 3^{\text{-}16}\, 7+ O(\pi),\ %
      0\,) \,.
    \end{multline*}
  \end{footnotesize}

  \noindent They can be those
of ternary quartics whose reduction modulo $\pi$ is
  a Hui normal form with 6 nodes, for instance
  $F=x\,y\,z\,(z+y+x) + \pi\,z^4$\,.
  Moreover, these invariants are already normalised, as they all have non-negative valuation, and at least one of them has valuation 0 at $\pi$, as the residue characteristic was supposed to be greater than 7.
  Consider then the generators of the ideal~\ref{eq:rA1p6},
  \begin{footnotesize}
    \begin{multline*}
      \langle\, 2^{4}\, 3^{2}\, I_{6} + I_{3}^2,\ \
      3^{2}\, I_{9} - I_{3}^3,\ \
      3\, J_{9} + I_{3}^3,\ \
      3^{4}\, I_{12} + 4\,I_{3}^4,\ \
      2\, 3^{2}\, J_{12} + I_{3}^4,\ \
      2^{2}\, 3^{5}\, I_{15} + I_{3}^5,\ \ \\
      2^{2}\, 3^{2}\, J_{15} - I_{3}^5,\ \
      3^{5}\, I_{18} - I_{3}^6,\ \
      3^{3}\, J_{18} - I_{3}^6,\ \
      2\, 3^{4}\, I_{21} - I_{3}^7,\ \
      2^{4}\, 3^{2}\, J_{21} + 7\, I_{3}^7,\ \
      I_{27}\,\rangle\,.
    \end{multline*}
  \end{footnotesize}

  \noindent Generically, their normalised valuations with respect to the Dixmier-Ohno invariants $(I_3$\,,\ \ldots $I_{27})$ are
  \begin{math}
    (\infty, 1, 1, \infty, 1, 1, 1, 1, 1, 1, 1, \infty)\,.
  \end{math}
  Indeed, $2^43^2I_6 + I_3^2$ is identically equal to zero, causing the first normalised valuation to be $\infty$, and $3^2I_9 - I_3^3 = -2^{-12}3^{-6} + \mathcal{O}(\pi) + 2^{-12}3^{-6}$ generically has valuation $1$, et cetera.
  In other words, $v_{\DO{}}\ref{eq:rA1p6}(F)=1$, and we deduce from
  Cor.~\ref{cor:stabsingtypes} and Alg.~\ref{algo:singularities} that the
  reduction type is \texttt{(BRAID)} (corresponding to the purple
  label on Fig.~\ref{fig:labelledstablemodelgraph}).

\end{example}

\subsection{Proof of Theorem~\ref{thm:GIT-MD}}
\label{sec:proof-theorem}

We split the proof of Theorem \ref{thm:GIT-MD} into different cases according to only having singularities of type $\sA_1$, at worst $\sA_2$, at worst $\sA_3$ or at least an $\sA_4$ singularity.
\subsubsection{Nodal singularities}
\label{sec:sing-type-A1}

Let $C\colon F=0$ be a plane quartic such that its reduction $\bar{F}=0$
modulo $\pi$ defines a plane quartic with singularities at worst of type
$\sA_1$, \textit{i.e.}\ at any of the cases in %
Tab.~\ref{subtab:SngToRedA1}. In this case the plane quartic gives already a
stable model for the curve and it is straightforward to compute the reduction
type of the special fibre.

More precisely, we develop here for the reader's convenience the details for
the $\sA_1$ case. The Hui classification (see Tab.~\ref{tab:inf}) allows us to write
$$
F=(z^2+yz)x^2+(\gamma y^2z+\epsilon y^3)x+(yz^3+\alpha y^2z^2+\beta y^3z+\delta y^4)+\pi\, G=0\,,
$$
and then the genus $2$ curve is given by the hyperelliptic model
$$
t^2+(\gamma y^2z+\epsilon y^3)t+(z^2+yz)(yz^3+\alpha y^2z^2+\beta y^3z+\delta y^4)=0\,.
$$
This is a geometric genus $2$ curve, further singularities would produce more singularities in $\bar{F}$. For example, following the same strategy, the $\sA_1^2$ case gives the arithmetic genus 2 curve
$$
t^2+x^2(\alpha y+\beta x)t+x^2(y^2+x^2)(y^2+\gamma yx+\delta x^2)
$$
consisting of an elliptic curve with a node.
This proves Thm.~\ref{thm:GIT-MD} for the cases in Tab.~\ref{subtab:SngToRedA1}.

\begin{remark}[see Prop.~\ref{prop:node-only-sing} in App.~\ref{sec:node-only-sing}]\label{rem:AoneProof}
In fact, we can give another proof for the converse of this result, of a more geometric nature. If one starts with a plane quartic whose stable reduction is of one of the types $\mathtt{(2n)}$, $\mathtt{(1nn)}$, $\mathtt{(0nnn)}$, $\mathtt{(1\text{-}\text{-}\text{-}0)}$, $\mathtt{(0nnn)}$, $\mathtt{(1\text{-}\text{-}\text{-}0)}$, $\mathtt{(0\text{-}\text{-}\text{-}\text{-}0)}$, $\mathtt{(0\text{-}\text{-}\text{-}\text{-}0n)}$, $\mathtt{(CAVE)}$ and $\mathtt{(BRAID)}$, then the canonical embedding of the stable model is an embedding of the stable genus 3 curve into $\PP^3_R$ whose only singularities in the special fibre are of type $\sA_1$.
This can be verified by doing the appropriate Riemann-Roch dimension
computations on the dualising sheaf of the stable curve.
This is done in the proof of Prop.~\ref{prop:node-only-sing} for some but not all of the cases.
\end{remark}

\subsubsection{Singularity type  \texorpdfstring{$\sA_2$}{A2}}
\label{sec:sing-type-A2}

We consider now quartics with at least a singularity of type $\sA_2$ and not
worse singularities, that is the 7 cases in Tab.~\ref{subtab:SngToRedA2}.
\begin{lemma}
We can place the $\sA_2$ singularity in such a way that the equation of the plane quartic is of the shape
\begin{equation}\label{eq:A2ex}
z^2x^2+(\beta yz^2+\delta y^2z+y^3)x+(yz^3+\alpha y^2z^2+\gamma y^3z)+\pi^s\, G=0\,.
\end{equation}
\end{lemma}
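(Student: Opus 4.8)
The plan is to use Hui's classification to normalise the reduction $\bar F$ over the residue field, and then lift the normalising change of coordinates integrally so that the very same substitution puts $F$ itself into the required shape. Observe first that the residue characteristic hypothesis $p\neq2,3,5,7$ is exactly what guarantees that Hui's normal forms are available in characteristic $p$ (Rem.~\ref{rem:huichar}). By assumption $\bar F$ is GIT-semi-stable with an $\sA_2$ cusp as its worst singularity (any of the seven cases of Tab.~\ref{subtab:SngToRedA2}), so over $\bar k$ it is projectively equivalent, up to scaling the equation, to the $\sA_2$ normal form of Tab.~\ref{tab:inf}. Collecting that form in powers of $x$ gives precisely $\bar F_{\sA_2}=z^2x^2+(\beta yz^2+\delta y^2z+y^3)x+(yz^3+\alpha y^2z^2+\gamma y^3z)$, whose cusp sits at $(1:0:0)$; the particular case among the seven only fixes the residues of $\alpha,\beta,\gamma,\delta$, so the argument is uniform.

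First I would arrange that the normalisation is defined over $k$, not merely over $\bar k$. The chosen $\sA_2$ point and the scalings used to normalise the three leading coefficients to $1$ need to be $k$-rational; this is automatic after replacing $K$ by a suitable unramified extension, which leaves $\pi$ a uniformiser and does not change the stable reduction type. Thus there exist $\bar M\in\GL_3(k)$ and $\bar\lambda\in k^\times$ with $\bar\lambda\,(\bar F\circ\bar M)=\bar F_{\sA_2}$. Now I would lift: the reduction map $\GL_3(R)\to\GL_3(k)$ is surjective (lift the entries of $\bar M$ arbitrarily to $R$, its determinant then being a unit), so choose $M\in\GL_3(R)$ reducing to $\bar M$ and a unit $\lambda\in R^\times$ reducing to $\bar\lambda$. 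Replacing $F$ by the $K$-equivalent model $\lambda\,(F\circ M)$ leaves the curve $C/K$ unchanged, and since $F,M,\lambda$ are integral its reduction is $\bar\lambda\,(\bar F\circ\bar M)=\bar F_{\sA_2}$. Writing $F_{\sA_2}$ for the honest Hui form with coefficients $\alpha,\beta,\gamma,\delta\in R$ lifting the residues and the three coefficients of $z^2x^2$, $y^3x$, $yz^3$ equal to $1$, we get $\lambda\,(F\circ M)=F_{\sA_2}+\pi H$ with $H\in R[x,y,z]$ homogeneous of degree $4$. Factoring the largest power of $\pi$ out of $H$ as $\pi H=\pi^s G$ with $G$ primitive, and noting $H\neq0$ because $C$ is smooth over $K$ (so $1\le s<\infty$), yields exactly the shape \eqref{eq:A2ex}.

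The only genuinely delicate point is the interface between the residue-field normalisation and the integral structure: one must be sure that the Hui-normalising transformation can be realised in $\GL_3(R)$, so that it extends over the model and commutes with reduction, and that the relevant $\sA_2$ point is rational over the residue field. Both are handled above — the former by surjectivity of $\GL_3(R)\to\GL_3(k)$, the latter by the unramified base change. No further obstruction appears precisely because $\bar F_{\sA_2}$ already carries its three distinguished coefficients equal to $1$, so the discrepancy between those and the (unit) coefficients of $\lambda\,(F\circ M)$ is absorbed into the error term $\pi^s G$ rather than forcing any additional normalisation.
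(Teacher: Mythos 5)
Your integral-lifting half (surjectivity of $\GL_3(R)\to\GL_3(k)$, an unramified base change for rationality, absorbing the unit discrepancies into $\pi^s G$, and $s<\infty$ from smoothness of $C$) is fine, but it is the routine part of the lemma. The genuine content is the claim you dispose of in one sentence: that the reduction $\bar F$, being of any of the seven types of Tab.~\ref{subtab:SngToRedA2}, is projectively equivalent over $\bar k$ to the $\sA_2$ normal form of Tab.~\ref{tab:inf}. That normal form is Hui's normal form for the stratum of quartics of type \emph{exactly} $\sA_2$; the other six types have their own normal forms, and three of them ($\sA_1^2\sA_2$, $\sA_1\sA_2^2$, $\sA_2^3$) are of the genuinely different shape $(y^2+\alpha yx+x^2)z^2+(\beta y^2x+\gamma yx^2)z+y^2x^2$ with one or more parameters equal to $-2$. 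Lying in the closure of the $\sA_2$ stratum (Fig.~\ref{fig:specialize}) does not imply being attained by the four-parameter $\sA_2$ family at special parameter values, so your remark that ``the particular case among the seven only fixes the residues of $\alpha,\beta,\gamma,\delta$'' is precisely the assertion that has to be proved, not an observation.

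That this is not vacuous can be seen by attempting the normalisation directly: after placing the cusp at $(1:0:0)$ with cuspidal tangent $z=0$ and killing the coefficients of $z^4$, $xz^3$ and $y^4$, the shape \eqref{eq:A2ex} still requires the coefficient of $yz^3$ to be a unit (so that it can be scaled to $1$), and its vanishing is equivalent to $(0:0:1)$ being an additional singular point --- a real possibility once $\bar F$ carries extra singularities. The paper handles this by a case analysis on the Hui normal forms: the cases $\sA_1\sA_2$, $\,^r\sA_1^3\sA_2$ and $\sA_2^2$ are read off from their tabulated forms, while for $\sA_1^2\sA_2$, $\sA_1\sA_2^2$ and $\sA_2^3$ one locates which of $\alpha,\beta,\gamma$ equals $-2$, applies the explicit substitution $(x,y,z)=(x',x'+y',z')$ (up to permuting variables), and invokes the non-degeneracy condition $\beta+\gamma\neq0$ that Hui imposes on that stratum. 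You need to supply this step, or an equivalent direct normalisation argument together with a verification, for each of the seven types, that the coefficient to be scaled to $1$ is indeed nonzero.
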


\begin{proof}
In the cases of $\sA_1\sA_2, \,^r\sA_1^3\sA_2,$ and $\sA_2^2$, this is relatively straightforward. In the cases of $\sA_1^2\sA_2, \sA_1\sA_2^2$, and $\sA_2^3$, we need to do a bit more work.
We get a Hui model of the shape $(y^2 + \alpha yx + x^2)z^2 + (\beta y^2x + \gamma yx^2)z + y^2x^2$, and for each $\alpha, \beta,$ and $\gamma$ that is equal to $-2$ we have a cusp.
If for example $\alpha = -2$, then the change of variables $(x,y,z) = (x',x'+y',z')$ will give a model in the shape of Eq.~\eqref{eq:A2ex}, up to permuting the variables.
Note that we use here that $\beta+\gamma \neq 0$ in this case, as required in \cite{hui79}.
\end{proof}
Starting with an equation of the shape of Eq.~\eqref{eq:A2ex}, we can take $t = z^2x$ and making the reduction modulo $\pi$, we recover an arithmetic genus 2 curve piece given by
$$
t^2+(\beta yz^2+\delta y^2z+y^3)t+z^2(yz^3+\alpha y^2z^2+\gamma y^3z)=0\,.
$$
We also find another component, of arithmetic genus $1$. Write Eq.~\eqref{eq:A2ex} as
\begin{equation*}
\begin{split}
a_{400}\pi^{e_1}x^4+(a_{310}\pi^{e_2}y+a_{301}\pi^{e_3}z)x^3+ (z^2+y(a_{220}\pi^{e_4}y+a_{211}\pi^{e_5}z))x^2+\\
(y^3+\delta y^2z+\beta yz^2+a_{103}\pi^{e_6}z^3)x+(a_{004}\pi^{e_7}z^4+yz^3+\alpha y^2z^2+\gamma y^3z+a_{040}\pi^{e_8}y^4)=0\,,
\end{split}
\end{equation*}
with the $a_{ijk}$ equal to $0$ or with $v(a_{ijk})=0$.
We now multiply $x$ by $\pi^{-2s}$ and $z$ by $\pi^s$ where $s=\operatorname{min}\{{e_1}/{6}, {e_2}/{4}, {e_3}/{3}, {e_4}/{2}, {e_5}\}$. We multiply the equation by $\pi^{2s}$ and we obtain
\begin{displaymath}
x(a_{400}\pi^{e_1-6s}x^3+a_{310}\pi^{e_2-4s}yx^2+a_{301}\pi^{e_3-3s}x^2z+ xz^2+a_{220}\pi^{e_4-2s}xy^2+a_{211}\pi^{e_5-s}xyz+y^3)=O(\pi).
\end{displaymath}

Under the inverse of this transformation, any point, except for the ones on the component $x = 0$, is mapped to the $\sA_2$ singularity at $(1:0:0)$ on the original Hui model.
As in subsection \ref{sec:sing-type-A1}, further singularities on the arithmetic genus 2 curve can be read from further singularities on the original plane quartic. The singularities on the arithmetic genus 1 component cannot be read from the singularities of the reduction of the plane quartic.
The above changes of variables produce a construction of the different components of the stable reduction of the curve and the reduction type of the special fibre is easily computable in each case.

\begin{remark}
A generalised version of Lem.~A.1 in the Appendix of \cite{KLS} implies the components of positive arithmetic genus should appear in the special fibre of any stable model and are not contracted. To check that this generalised version of the lemma holds, we should check this for nodal curves of geometric genus 0 and positive arithmetic genus, \textit{e.g.}\ a genus 0 curve $T$ with one node which intersects the rest of the special fibre at least once. The proof is similar to the proof in \cite{KLS} and is done by careful bookkeeping.
The idea is to make a regular model. Following Lem.~3.21 of \cite{liu02}, the node turns into a chain of $\P^1$'s and these $\P^1$'s get subsequently contracted when we are making the stable model. So there will still be a geometric genus 0 component in the special fibre with one self-intersection and at least one more intersection with the rest of the stable model. %
\end{remark}

Note that in the case of multiple $\sA_2$-singularities, the different components of arithmetic genus 1 do not intersect in the special fibre, as they are contracted to their distinct respective cusps on the Hui model.
Therefore, we get that each $\sA_2$-singularity gives rise to a different ``genus 1 tail'', \textit{i.e.}\ a component of arithmetic genus 1, intersecting the rest of the stable reduction in exactly 1 point. This proves Thm.~\ref{thm:GIT-MD} for the cases in Tab.~\ref{subtab:SngToRedA2}.

\subsubsection{Singularity type  \texorpdfstring{$\sA_3$}{A3}}
\label{sec:sing-type-A3}

Next, we consider the cases in Tab.~\ref{subtab:SngToRedA3} and Tab.~\ref{subtab:SngToRedrA3} with at least a singularity of type $\sA_3$ and no worse singularities. %
According to Tab.~\ref{tab:inf}, we can place the $\sA_3$ singularity in such a way that the equation of the plane quartic is of the shape
\begin{equation*}
\begin{split}
F= a_{004}\pi^{e_1} z^4+ (a_{103}\pi^{e_2}x+a_{013}\pi^{e_3}y)z^3 + (x^2+a_{112}\pi^{e_4}xy+a_{022}\pi^{e_5}y^2)z^2+\\%
(\alpha y^2x+a_{031}\pi^{e_6}y^3)z+ y^4+\beta y^3x+\gamma y^2x^2+yx^3+x^2z\,G=0\,,
\end{split}
\end{equation*}
where  $G$ is a degree $1$ polynomial in $x$ and $y$ with coefficients of positive valuation.
Consider the new variables $(X,\,Y,\,Z)=(x,\,\pi^sy,\,\pi^{2s}z)$ where $s=\min\{e_1/4,e_2/2, e_3/3,e_4, e_5/2, e_6\}$.  This gives an embedding of $C = \{F=0\} \subseteq\mathbb{P}^2\hookrightarrow\mathbb{P}^2\times\mathbb{P}^2:\,(x:y:z)\mapsto ((x:y:z),\, (X:Y:Z))$.

Modulo $\pi$, and if $x\neq0$, we get the arithmetic genus $1$ component
$$
\{ ((1:y:z), (1:0:0)) : z^2+\alpha y^2z+ y^4+\beta y^3+\gamma y^2 + y\}\,.
$$
Notice that the homogenisation of this equation is a singular model of the arithmetic genus $1$ curve mentioned above and that the point at infinity actually corresponds to two points.
If $Z\neq 0$, modulo $\pi$ we obtain the points $((0:0:1),(X:Y:1))$ where $X$ and $Y$ satisfy the equation
\begin{equation*}
\begin{split}
X^2+\alpha XY^2+ Y^4+ a_{031}\pi^{e_6-s}Y^3+a_{022}\pi^{e_5-2s}Y^2+a_{112}\pi^{e_4-s}XY+ a_{013}\pi^{e_3-3s}Y+\\ a_{103}\pi^{e_2-2s}X+a_{004}\pi^{e_1-4s}=0\,.
\end{split}
\end{equation*}
This is again an arithmetic genus $1$ curve, whose projective closure is again a singular model. The point at infinity corresponds to two points.
These two arithmetic genus $1$ components intersect at the double point
$((0:0:1), (1:0:0))$.

If no more singularities appear on any of the arithmetic genus 1 curves we obtain reduction type $\mathtt{(1=1)_H}$ for the quartic. From the original quartic equation we can only control the singularities appearing in one of the arithmetic genus 1 components. We do not have any information about the possible singularities of the second component.
A case by case inspection gives the reduction type possibilities in Tab.~\ref{subtab:SngToRedA3}.

For the cases in Tab.~\ref{subtab:SngToRedrA3}, we proceed as follows: we study first the cases $\,^r\sA_1\sA_3$ and $\,^r\sA_1^2\sA_3$ and then their degenerations.
In the first case we can write the quartic as
$$
x(y^2z+x(\alpha z^2+\beta yz +xz+y^2))+\pi^s\,G=0\,.
$$
The naive reduction of this model gives an arithmetic genus 1 curve. The change of variables $X=x$, $Y=\pi^{s/4}$ and $Z=\pi^{s/2}z$ gives the quartic
\begin{math}
  Z(XY^2+\alpha X^2Z+gZ^3)=0\,,
\end{math}
where $g$ is the coefficient of $z^4$ in $G$, defining another arithmetic genus 1 curve.
The map $C\hookrightarrow \mathbb{P}^2\times\mathbb{P}^2$ given by $(x:y:z)\mapsto ((x:y:z),\,(X:Y:Z))$ produces two arithmetic genus 1 curves intersecting at two points. That is, a degeneration of $\mathtt{(1=1)_H}$. Further singularities on the first arithmetic genus 1 component can be read from further singularities of the original plane quartic. A priori we cannot see further singularities on the second arithmetic genus 1 component.

For the last case, namely $\,^r\sA_1^2\sA_3$, which is very degenerated, we change the strategy and we use Tim Dokchitser's machinery \cite{Dokchitser21} to compute regular models. This strategy is especially suitable for this case, but this is also an opportunity to show how it could have been used to prove the previous cases in a different way. The valuation polytope associated to a plane curve of the shape
$$
(xz-y^2)(xz-\alpha y^2)+\pi^s\, G=0\,,
$$
corresponding to a plane quartic with a $\,^r\sA_1^2\sA_3$ singularity type has an edge (after switching $y$ and $z$ and  making $z=1$ as in~\cite[Fig.~7.2a]{BDDLL2023}) joining the points  corresponding to the monomials $1$, $xy$ and $x^2y^2$. From~\cite[Fig.~7.3]{BDDLL2023}, we see that we can (only) obtain any of the degeneration types $\mathtt{(*=*)_H}$ by playing with the valuations of the coefficients of $G$.

All of this together proves Thm.~\ref{thm:GIT-MD} for the cases in Tab.~\ref{subtab:SngToRedA3} and \ref{subtab:SngToRedrA3}.

\subsubsection{Singularity type  \texorpdfstring{$\sA_4$}{A4}}
\label{sec:sing-type-A4}

If a plane quartic has an $\sA_4$ singularity, we can assume, according to Hui, that it is given by an equation of the form
$$
{x}^{2}{z}^{2}+2\,{y}^{2}xz+{y}^{4}+\alpha\,{y}^{3}x+\beta\,{y}^{2}{x}^{2}+y{x}^{3}+\pi\, G=0\,.
$$
After multiplying $x$ by a $\pi^s$, dividing $z$ by $\pi^s$ with $4s<1$ and renaming, we obtain a quartic of the form
\begin{math}
  ({x}{z}+{y}^{2})^2+\pi\, G=0\,,
\end{math}
with singularity type $c^2$. We introduce the new variable $t=(xz+y^2)/\pi^{1/2}$ to get the equation $t^2+G=0$. Modulo $\pi$ this produces a hyperelliptic equation $\bar{t}^2+\bar{G}(1,\bar{s},-\bar{s}^2)=0$ and hence, a hyperelliptic reduction type $\mathtt{(*)_H}$. The exact reduction type is encoded in the ``cluster picture'' of $G(1,s,-s^2)$ as explained in \cite{m2d2}.
 This proves Thm.~\ref{thm:GIT-MD} for the cases in Tab.~\ref{subtab:SngToRedOthers}.

\subsection{Applications to databases}
\label{sec:modform}
We applied Alg.~\ref{algo:singularities} to a dataset consisting of more than
82\,000 curves by Sutherland~\cite{Database3}. Our method only works for
primes $p > 7$ and there are 137\,496 pairs $(C,p)$ for which $p > 7$ and $p$
is a prime of bad reduction of a plane quartic~$C$ appearing in the
dataset. For 131\,673 of them, the reduction has only one $\sA_1$ singularity,
and the stable reduction type is \texttt{(2n)}. Other common types were
$\sA_1^2$ (\,\texttt{(1nn)}, 3511 cases) and $\sA_2$ (\,\texttt{(2e)} or
\texttt{(2n)}, 1829 cases). Not all the singularity types occurred. For
example, there was no curve with three cusps in the dataset. The total
computation took a little less than 1650 seconds or about 10 milliseconds per
case on a machine with an \textsc{amd} \textsc{epyc} {\footnotesize 7713}
\textsc{cpu}. For the computation, which was done with \textsc{magma} version
2.28-8, only 74~\textsc{mb} of memory was needed.  The largest part of the computation time is spent on evaluating the equations for the $\sA_1^3$ stratum on the Dixmier-Ohno invariants of the curves.

\appendix

\section{Node-only singularity quartics}
\label{sec:node-only-sing}

Thm.~\ref{thm:GIT-MD} reports eight stable reduction types whose
characterisation with singularities is one-to-one, they correspond to quartics with
only node singularities. The following result gives an independent and more
geometric proof of this.
\begin{proposition}\label{prop:node-only-sing}
  Let $C\colon F=0$ be a smooth plane quartic over a non-archimedean
  local field $K$.
  Suppose that the reduction $\bar{F}$ of the ternary quartic is GIT-semi-stable.
  Then the stable reduction type of the plane quartic $C$ is $\mathtt{(2n)}$, respectively
  $\mathtt{(1nn)}$, $\mathtt{(0nnn)}$,
  $\mathtt{(1\text{-}\text{-}\text{-}0)}$, $\mathtt{(0nnn)}$,
  $\mathtt{(1\text{-}\text{-}\text{-}0)}$,
  $\mathtt{(0\text{-}\text{-}\text{-}\text{-}0)}$,
  $\mathtt{(0\text{-}\text{-}\text{-}\text{-}0n)}$, $\mathtt{(CAVE)}$ and
  $\mathtt{(BRAID)}$, if and only if the singularity type of the reduction
  $\bar{F}$ is $\sA_1$, respectively $\sA_1^2$, $\sA_1^3$, $\,^r\sA_1^3$,
   $\,^r\sA_1^4\,_{con}$, $\,^r{\sA_1^4}\,_{cub}$, $\,^r\sA_1^5$
  and $\,^r\sA_1^6$\,.
\end{proposition}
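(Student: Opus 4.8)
The implication from the singularity type of $\bar{F}$ to the reduction type is exactly the content of Thm.~\ref{thm:GIT-MD} on the rows of Tab.~\ref{subtab:SngToRedA1}, already proved in Sec.~\ref{sec:sing-type-A1}: a plane quartic whose only singularities are ordinary nodes is itself a stable model, and one reads the dual graph off directly. The plan is therefore to establish the converse implication---from the reduction type to the node-only singularity type---by the geometric argument announced in Rem.~\ref{rem:AoneProof}. First I would let $\mathcal{C}/R$ be the stable model of $C$, with special fibre $\mathcal{C}_s$ of one of the eight listed stable types. None of these types carries the subscript $\mathtt{H}$ in Fig.~\ref{fig:labelledstablemodelgraph}, so $\mathcal{C}_s$ is a \emph{non-hyperelliptic} stable curve of genus $3$.

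The central step is to show that the relative dualising sheaf $\omega_{\mathcal{C}/R}$ is relatively very ample. Since $\mathcal{C}/R$ is a family of stable (hence Gorenstein, lci) curves, $\omega$ commutes with base change, and $h^0(\mathcal{C}_s,\omega)=3$, $h^1(\mathcal{C}_s,\omega)=h^0(\mathcal{C}_s,\mathcal{O})=1$ are constant in the family; by cohomology and base change $\pi_*\omega$ is locally free of rank $3$ and relative very ampleness reduces to very ampleness of $\omega$ on the single curve $\mathcal{C}_s$. The latter I would check by Riemann--Roch on the dualising sheaf: $\omega$ fails to be very ample precisely when $h^0(\mathcal{C}_s,\mathcal{O}(P+Q))\geq 2$ for some degree-$2$ divisor, i.e.\ when $\mathcal{C}_s$ carries a $g^1_2$ and is (a limit of) hyperelliptic, which is exactly what the absence of the label $\mathtt{H}$ excludes. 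I expect the separation of points and tangent vectors across distinct components meeting at several nodes---most delicately for the very degenerate fibres $\mathtt{(CAVE)}$ and $\mathtt{(BRAID)}$---to be the main technical obstacle, and I would carry it out subcurve by subcurve, using the standard very-ampleness criterion for $\omega$ on nodal curves (\textit{cf.}~\cite{HM98}).

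Granting relative very ampleness, the complete linear system of $\omega_{\mathcal{C}/R}$ embeds $\mathcal{C}\hookrightarrow\PP^2_R$ as a flat family of plane quartics whose generic fibre is the canonical model of $C$, a smooth plane quartic $K$-isomorphic to $C$, and whose special fibre is a closed embedding of $\mathcal{C}_s$. Hence the plane quartic so obtained is isomorphic to $\mathcal{C}_s$ and has exactly the nodes of the stable curve as its only singularities, all of type $\sA_1$. To match each stable type with its singularity type I would read off the number of nodes and, for the reducible fibres, the components: a subcurve $C_i$ of arithmetic genus $g_i$ meeting the rest at $n_i$ nodes maps to a plane curve of degree $\deg(\omega_{\mathcal{C}_s}|_{C_i})=2g_i-2+n_i$, so that $\mathtt{(2n)},\mathtt{(1nn)},\mathtt{(0nnn)}$ give the irreducible nodal quartics $\sA_1,\sA_1^2,\sA_1^3$, while $\mathtt{(1\text{-}\text{-}\text{-}0)}$ gives a line plus a cubic ($\,^r\sA_1^3$), $\mathtt{(0\text{-}\text{-}\text{-}0n)}$ a line plus a nodal cubic ($\,^r\sA_1^4\,_{cub}$), $\mathtt{(0\text{-}\text{-}\text{-}\text{-}0)}$ two conics ($\,^r\sA_1^4\,_{con}$), and $\mathtt{(CAVE)}$, $\mathtt{(BRAID)}$ the configurations of three and four lines ($\,^r\sA_1^5$, $\,^r\sA_1^6$).

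Finally I would identify this canonically embedded reduction with the given $\bar{F}$. A plane quartic with only ordinary nodes is GIT-stable (\cite[Lemma~1.4]{Artebani09}), so the special fibre constructed above is GIT-stable; as the closed orbit in a semistable equivalence class is unique and a strictly semistable point has a non-stable closed orbit in its closure, the GIT-semi-stable reduction $\bar{F}$ must itself be stable and $\mathrm{PGL}_3(k)$-equivalent to $\mathcal{C}_s$. In particular $\bar{F}$ has the stated node-only singularity type, which completes the converse and hence the equivalence.
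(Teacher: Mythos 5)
Your overall strategy coincides with the paper's: the forward direction is read off from the fact that a nodal plane quartic is already a stable model (Sec.~\ref{sec:sing-type-A1}, Tab.~\ref{subtab:SngToRedA1}), and the converse is obtained by canonically embedding the stable special fibre into $\PP^2$ and identifying the image as a quartic with only $\sA_1$ singularities. The paper carries this out fibre by fibre, with explicit Riemann--Roch computations for the two extremal types $\mathtt{(2n)}$ and $\mathtt{(BRAID)}$ and an appeal to similarity for the rest; your relative formulation via $\omega_{\mathcal{C}/R}$ and cohomology-and-base-change, the degree bookkeeping $\deg(\omega|_{C_i})=2g_i-2+n_i$ on components, and the closing GIT argument identifying the canonically embedded special fibre with the given $\bar{F}$ (same point of the invariant weighted projective space, hence same closed orbit) make explicit some steps the paper leaves implicit, and the GIT identification in particular is a worthwhile addition.

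There is, however, one incorrect claim at the heart of your central step. You assert that $\omega$ fails to be very ample on $\mathcal{C}_s$ precisely when $\mathcal{C}_s$ carries a $g^1_2$, ``which is exactly what the absence of the label $\mathtt{H}$ excludes.'' This is false for stable curves: the type $\mathtt{(2e)}$, a genus-$2$ component with an elliptic tail $E$ attached at one point, carries no $\mathtt{H}$ subscript in Fig.~\ref{fig:labelledstablemodelgraph}, yet $\deg(\omega|_E)=2\cdot 1-2+1=1$, so the canonical map contracts $E$ and $\omega$ is not very ample. Very ampleness of the dualising sheaf of a nodal curve requires, beyond excluding honest hyperellipticity, the absence of separating nodes (more precisely a $3$-connectedness condition in the sense of Catanese--Franciosi); the criterion $h^0(\mathcal{O}(P+Q))\geq 2$ that you invoke is indeed triggered by an elliptic tail, not only by a $g^1_2$. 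For the eight types of the proposition this extra condition happens to hold --- none of them has a separating node --- but that is a fact to be checked type by type, and the non-$\mathtt{H}$ label alone does not deliver it. Since you already announce that you would verify separation of points and tangent vectors ``subcurve by subcurve,'' the repair is simply to drop the claimed equivalence and carry out that verification for the eight listed configurations (this is exactly the content of the paper's Riemann--Roch computations for $\mathtt{(2n)}$ and $\mathtt{(BRAID)}$); as written, the shortcut ``non-hyperelliptic $\Rightarrow$ very ample'' is a genuine gap.
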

\begin{proof}
  For ternary quartics $F$ such that $\bar{F}$ is GIT-semi-stable with $\sA_1^k$
  singularities, reducible or not, we know that there exists a plane quartic
  model whose special fibre only has nodes and therefore is semi-stable (it is
  actually stable).  Moreover, we immediately know the stable reduction
  type.

  Conversely, let us start with a curve whose stable reduction type is one of
  the eight stated, and which we assume to be non-hyperelliptic. Since we have
  to relate it to certain quartic singularities, we must first embed its
  special fibre in $\PP^2$. This can be done using techniques of a geometric
  nature, which depend on the type considered. We do this for the two extremal
  types, \texttt{(2n)} and \texttt{(BRAID)}, the other types can be done very
  similarly.

  For \texttt{(2n)}, we are looking at a curve $C$ of genus 2 with two points
  $R$ and $S$ glued to each other. We want to show that the canonical sheaf
  embeds it into $\PP^2$. The canonical sheaf is obtained by taking
  $K_{\widetilde{C}} + R + S$ and considering its sections, which we view as differentials with at most single poles in $R$ and $S$ and no other poles, for which the sum
  of the residues at $R$ and $S$ is zero. We know that
  $h^0_{\widetilde{C}}(K_{\widetilde{C}} + R + S) = 3$ and
  $h^0_{\widetilde{C}}(K_{\widetilde{C}} + R) =
  h^0_{\widetilde{C}}(K_{\widetilde{C}} + S) =
  h_{\widetilde{C}}(K_{\widetilde{C}}) = 2$ by Riemann-Roch. The sum of the
  residues of a meromorphic differential is 0, so the functions will automatically
  satisfy the residue condition. By Riemann-Roch
  $h^0_{\widetilde{C}}(K_{\widetilde{C}} + R + S - P) = 2$ for all $P$, so
  there will be a well-defined map to $\PP^2$. The embedding condition is not
  satisfied at the points $R$ and $S$, but that makes sense, as they are
  mapped to the same point. For any other two points $P$ and $Q$, we claim
  that $h_{\widetilde{C}}(K_{\widetilde{C}} + R + S - P - Q) = 1$. It suffices to check that
  $P+Q \not\sim R+S$. As there is only one degree 2 map from $\widetilde{C}$ to $\PP^1$,
  the hyperelliptic involution, the only way in which $P+Q \sim R+S$ can hold
  is if both $P$ and $Q$ and $R$ and $S$ are hyperelliptic involutions of each
  other. As we were looking at non-hyperelliptic reduction, $R$ and $S$ are
  not hyperelliptic involutions of each other. Therefore, we get an embedding
  outside of $R$ and $S$. To see that the image in $\PP^2$ is a nodal curve,
  we note that there exists a section vanishing twice at $R$, but only once at $S$.
  This follows from the fact that $h_{\widetilde{C}}(K_{\widetilde{C}} - S) = 1 > 0 = h_{\widetilde{C}}(K_{\widetilde{C}} - S - R)$.

  For \texttt{(BRAID)}, we look at four copies of $\PP^1$ glued in total at
  six pairs of points. Assume that the glued points are the disctinct points $0$, $1$, and
  $\alpha$, extending the field if necessary. For each $\PP^1$, we can take $\mathrm{div}(dx)$ as canonical divisor,
  which will give $-2 \, \infty$. The canonical sheaf is now obtained by
  putting gluing conditions on four-tuples of differential forms of the shape
  $\frac{ax + b}{x(x-1)(x-\alpha)} dx$. This will give rise to 6 gluing conditions
  on an 8-dimensional vector space, but the sum of all the gluing conditions
  will be 0 as the sum of the residues on any $\PP^1$ is 0. So there are at
  most 5 gluing conditions, and it is easy to check by linear algebra that
  there are really 5 linear independent conditions so that we get a potential
  map to $\PP^2$. In this case, it
  is possible to explicitly compute the image of the map, see that it is
  four lines and check that it is an embedding, similarly to the \texttt{(2n)}
  case.
\end{proof}

\printbibliography

\end{document}